\newcommand\cyr{%
\renewcommand\rmdefault{wncyr}%
\renewcommand\sfdefault{wncyss}%
\renewcommand\encodingdefault{OT2}%
\normalfont
\selectfont}
\DeclareTextFontCommand{\textcyr}{\cyr}
\DeclareFontFamily{OT1}{rsfs}{}
\DeclareFontShape{OT1}{rsfs}{n}{it}{<-> rsfs10}{}
\DeclareMathAlphabet{\mathscr}{OT1}{rsfs}{n}{it}
\numberwithin{equation}{section}
\newtheorem{theorem}{Theorem}[section]
\newtheorem{lemma}[theorem]{Lemma}
\newtheorem{proposition}[theorem]{Proposition}
\newtheorem{corollary}[theorem]{Corollary}
\newtheorem{claim}[theorem]{Claim}
\newtheorem{question}{Question}
\newtheorem*{maintheoremA}{Theorem A}
\newtheorem*{maintheoremB}{Theorem B}
\newtheorem{conjecture}[theorem]{Conjecture}
\theoremstyle{definition}
\newtheorem{definition}[theorem]{Definition}
\newtheorem{remark}[theorem]{Remark}
\theoremstyle{remark}
\newtheorem{example}[theorem]{Example}
\newcommand{\Ass}{\operatorname{Ass}}
\newcommand{\Assh}{\operatorname{Assh}}
\newcommand{\Spec}{\operatorname{Spec}}
\newcommand{\Ht}{\operatorname{ht}}
\newcommand{\Ext}{\operatorname{Ext}}
\newcommand{\Supp}{\operatorname{Supp}}
\newcommand{\Ann}{\operatorname{Ann}}
\newcommand{\Deg}{\operatorname{Deg}}
\newcommand{\hdeg}{\operatorname{hdeg}}
\newcommand{\udeg}{\operatorname{udeg}}
\newcommand{\Min}{\operatorname{Min}}
\newcommand{\rank}{\operatorname{rank}}
\newcommand{\ux}{\underline{x}}
\newcommand{\uy}{\underline{y}}
\newcommand{\fm}{\mathfrak{m}}
\newcommand{\fa}{\mathfrak{a}}
\newcommand{\fn}{\mathfrak{n}}
\newcommand{\length}{{l}}
\newcommand{\len}{l}
\newcommand{\mf}[1]{\mathfrak #1}
\renewcommand{\ge}{\geqslant} \renewcommand{\le}{\leqslant}
\renewcommand{\geq}{\geqslant} \renewcommand{\leq}{\leqslant}
\newcommand{\ol}{\overline}
\begin{document}
\title[Lech-Stuckrad-Vogel]{Lech's inequality, the St\"{u}ckrad--Vogel conjecture, and uniform behavior of Koszul homology}

\author[Patricia Klein]{Patricia Klein}
\address{Department of Mathematics, University of Kentucky, Lexington, KY 40506 USA}
\email{pklein@uky.edu}

\author[Linquan Ma]{Linquan Ma}
\address{Department of Mathematics, Purdue University, West Lafayette, IN 47907 USA}
\email{ma326@purdue.edu}

\author[Pham Hung Quy]{Pham Hung Quy}
\address{Department of Mathematics, FPT University, Hanoi, Vietnam}
\email{quyph@fe.edu.vn}

\author[Ilya Smirnov]{Ilya Smirnov}
\address{Department of Mathematics, Stockholm University, S-10691, Stockholm, Sweden}
\email{smirnov@math.su.se}

\author[Yongwei Yao]{Yongwei Yao}
\address{Department of Math and Statistics, Georgia State University, Atlanta, GA 30302 USA}
\email{yyao@gsu.edu}

\maketitle

\begin{abstract}
Let $(R,\fm)$ be a Noetherian local ring, and let $M$ be a finitely generated $R$-module of dimension $d$. We prove that the set $\left\{\frac{l(M/IM)}{e(I, M)} \right\}_{\sqrt{I}=\fm}$ is bounded below by ${1}/{d!e(\overline{R})}$ where $\overline{R}=R/\Ann(M)$. Moreover, when $\widehat{M}$ is equidimensional, this set is bounded above by a finite constant depending only on $M$. The lower bound extends a classical inequality of Lech, and the upper bound answers a question of St\"{u}ckrad--Vogel in the affirmative. As an application, we obtain results on uniform behavior of the lengths of Koszul homology modules.
\end{abstract}

\section{Introduction}

In \cite{Lech60}, Lech proved a simple inequality relating the Hilbert--Samuel multiplicity and the colength of an ideal. It states that if $(R,\fm)$ is a Noetherian local ring of dimension $d$ and $I$ is any $\fm$-primary ideal of $R$, then we have $$e(I, R)\leq d!e(R)l(R/I),$$ where $e(I, R)$ denotes the Hilbert--Samuel multiplicity of $I$ and $e(R)=e(\fm, R)$. In the same paper, Lech conjectured that, for every flat local extension $(R,\fm)\to (S,\fn)$ of Noetherian local rings, one has $e(R)\leq e(S)$. This conjecture is wide open in general.  Using the above inequality, Lech obtained the estimate $e(R)\leq d!e(S)$ where $d=\dim R$ \cite{Lech60}. We refer to \cite{HSV17} and \cite{Han02} for some generalizations of Lech's inequality and to \cite{Ma17} for recent progress on Lech's conjecture.

If we consider the set $\left\{ \frac{l(R/I)}{e(I, R)} \right\}_{\sqrt{I}=\fm}$ of positive numbers, then Lech's inequality is simply saying that this set is bounded below by $\frac{1}{d!e(R)}$ (and, thus, is bounded away from $0$). The infimum of this set was investigated by Mumford in his study of local stability \cite{Mum77}. In a different direction,
in \cite{SV96} St\"{u}ckrad and Vogel studied whether $\left\{ \frac{l(R/I)}{e(I, R)} \right\}_{\sqrt{I}=\fm}$ is bounded from above (see also \cite{MV95}), and they conjectured the following \cite[Theorem 1 and Conjecture]{SV96}:

\begin{conjecture}[St\"{u}ckrad--Vogel]
\label{conj:Vogel}
Let $(R,\fm)$ be a Noetherian local ring and let $M$ be a finitely generated $R$-module. Let $e(I, M)$ be the Hilbert--Samuel multiplicity\footnote{In this paper, we define the Hilbert--Samuel multiplicity of a finitely generated module $M$ with respect to $I$ to be $e(I, M)=\lim\limits_{n\to\infty}t!\frac{l_R(M/I^nM)}{n^t}$ where $t=\dim M$. This is always a positive integer even when $\dim M<\dim R$. We will simplify our notation when $I=\fm$ and write $e(M)$ for $e(\fm, M)$.} of $M$ with respect to $I$. Set $$n(M)=\sup_{\sqrt{I+\Ann(M)}=\fm}\left\{\frac{l(M/IM)}{e(I, M)}\right\}.$$ Then $n(M)<\infty$ if and only if $M$ is quasi-unmixed (i.e., $\widehat{M}$ is equidimensional).
\end{conjecture}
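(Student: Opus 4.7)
The plan is to treat the two directions separately, reducing each to the complete case: $\fm$-adic completion preserves colengths and Hilbert--Samuel multiplicities of $\fm$-primary ideals, so $n(M)=n(\widehat M)$, and I may assume $R$ is complete. Set $d=\dim M$. For the \emph{necessity} direction (the easier direction, essentially \cite[Theorem 1]{SV96}): if $\widehat M$ is not equidimensional, pick $\fq\in\Ass\widehat M$ with $t:=\dim\widehat R/\fq<d$ and construct a family $\{I_n\}$ of $\fm$-primary ideals with generators adapted to $\fq$ together with rising powers of parameters transverse to $\fq$, so that $e(I_n,\widehat M)$ grows of order $n^{d-t}$ while $l(\widehat M/I_n\widehat M)$ gains an additional $\fq$-primary contribution of strictly higher order in $n$. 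Quantifying both sides forces $l/e\to\infty$, hence $n(M)=\infty$.

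For the \emph{sufficiency} direction, assume $\widehat M$ is equidimensional. First reduce to parameter ideals: base change to $R':=R[t]_{\fm R[t]}$, $M':=M\otimes_R R'$, a faithfully flat extension with infinite residue field that preserves $l/e$ ratios and equidimensionality of the completion. Any $\fm R'$-primary ideal $IR'$ admits a minimal reduction $J\subseteq IR'$ generated by $d$ elements $\mathbf{x}$ forming a system of parameters for $M'$, giving $l(M/IM)/e(I,M)\le l(M'/JM')/e(J,M')$, so it suffices to bound $l(M/\mathbf{x} M)/e(\mathbf{x},M)$ over systems of parameters $\mathbf{x}$ for $M$. Next, replace $R$ by $R/\Ann M$ to assume $R$ is itself complete equidimensional with $\Ann M=0$, and by Cohen's structure theorem pick a regular local subring $A\hookrightarrow R$ of dimension $d$ with $R$ module-finite over $A$. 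For $\mathbf{x}$ a system of parameters of $R$ \emph{lying in $A$}, the surjection $A^{\mu_A(M)}\twoheadrightarrow M$ yields $l_A(M/\mathbf{x} M)\le\mu_A(M)\cdot l_A(A/\mathbf{x} A)$; combining this with $l_A(A/\mathbf{x} A)=e(\mathbf{x},A)$ (Cohen--Macaulayness of $A$), the rank formula $e_A(\mathbf{x},M)=\rank_A(M)\cdot e(\mathbf{x},A)$, and the length conversion $l_A=[k(R):k(A)]\cdot l_R$ on $R$-modules of finite length, produces the uniform bound
$$\frac{l(M/\mathbf{x} M)}{e(\mathbf{x},M)}\le \frac{\mu_A(M)}{\rank_A(M)}=:C(M).$$

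The main obstacle is extending this bound from parameter systems lying inside $A$ to \emph{arbitrary} systems of parameters of $R$. A natural reformulation is that one needs uniform estimates $l(H_i(\mathbf{x};M))\le C\cdot e(\mathbf{x},M)$ in $\mathbf{x}$ and $i$, which is precisely the "uniform behavior of Koszul homology" promised in the title, and via $e(\mathbf{x},M)=\sum_i(-1)^i l(H_i(\mathbf{x};M))$ is equivalent to the ratio bound for parameters. The equidimensional hypothesis is indispensable here---without it the necessity construction above already violates any such bound---and a plausible route is to combine the Noether normalization comparison with a Cohen--Macaulay approximation, or to leverage a big Cohen--Macaulay module over $R$ to reduce uniform control of Koszul homology to a setting in which $l=e$ holds for every parameter ideal.
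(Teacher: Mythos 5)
Your reduction steps at the start of the sufficiency direction are fine (completion, enlarging the residue field, passing to minimal reductions, killing $\Ann(M)$, Noether normalization $A\hookrightarrow R$), and the bound $l(M/\mathbf{x}M)/e(\mathbf{x},M)\le \mu_A(M)/\rank_A(M)$ for systems of parameters $\mathbf{x}$ \emph{lying in $A$} is correct. But this is where the proof stops, and what remains is not a technical loose end: it is the entire content of the theorem. Note that your bound for parameters inside $A$ never uses equidimensionality (rank over $A$ only sees the top-dimensional part of $M$), whereas the statement is false without that hypothesis; so the argument as given cannot possibly extend to arbitrary systems of parameters by soft means, and the ``unbalanced'' parameter ideals of $R$ that are not comparable to ideals generated in $A$ are exactly the ones that make $n(M)=\infty$ in the non-equidimensional case. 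Your two suggested routes for closing the gap do not work as stated: the Koszul-homology reformulation is essentially circular (in the paper the uniform bounds $l(H_i(\mathbf{x};M))\le C\,e(\mathbf{x},M)$ of Theorem \ref{Thm:HomoBound} are \emph{derived from} Theorem \ref{Thm:Vogel}, or, in the alternative induction of Remark \ref{SVbyInduction}, proved simultaneously with it using further inputs you do not supply), and a big Cohen--Macaulay module/algebra gives no finite-length comparison, so no bound on $l(M/\mathbf{x}M)$ falls out of it.

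For comparison, the paper's proof of the ``if'' direction proceeds quite differently: it bounds $l(M/IM)$ above by an extended degree $\Deg(I,M)$ (Vasconcelos's homological degree, or the unmixed degree), using the defining property $\Deg(I,M)\ge\Deg(I,M/xM)$ for generic $x\in I\setminus\fm I$ and cutting by a generic system of parameters; it then observes that $\Deg(I,M)$ is a finite sum $\sum_P e(I,R/P)$ with $\Lambda(M)$ independent of $I$, so everything reduces to the uniform comparison $\sup_I e(I,R/P)/e(I,M)<\infty$. That comparison is proved by induction on dimension after reducing to a complete local domain, and the key uniform input is Huneke's uniform Artin--Rees theorem \cite{Hun92}, which yields $e(I,R/(x))\le k\,e(I,R)$ for all $\fm$-primary $I$ (Lemma \ref{lem:Huneke}). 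Equidimensionality enters precisely there, through the associativity formula and the fact that a complete domain modulo a nonzerodivisor is equidimensional. No ingredient of this kind (generic hyperplane sections controlling colength, a finite multiplicity decomposition independent of $I$, or a uniform Artin--Rees-type estimate) appears in your proposal, so the sufficiency direction is not established. Your ``only if'' sketch is also only a sketch (no construction of the ideals $I_n$ or the asymptotic count), but since the paper itself cites \cite[Theorem 1]{SV96} for that direction, this is a lesser concern.
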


St\"{u}ckrad and Vogel proved the ``only if" direction in general and a graded version of the ``if" direction \cite[Theorem 1]{SV96}. Some other partial results were obtained in \cite{AH00}. In this paper we settle this conjecture in the affirmative. Furthermore, motivated by Conjecture \ref{conj:Vogel}, it is quite natural to inquire whether Lech's classical inequality can be extended to all finitely generated modules, i.e., whether there is a lower bound on the set $\left\{\frac{l(M/IM)}{e(I, M)} \right\}_{\sqrt{I}=\fm}$ for a finitely generated $R$-module $M$. We also answer this question in the affirmative. In sum, our main result is the following:

\begin{maintheoremA}[Theorem \ref{Thm:Vogel} and Theorem \ref{Thm:LechMod}]
Let $(R,\fm)$ be a Noetherian local ring, and let $M$ be a finitely generated $R$-module of dimension $d$. Set
$$m(M)=\inf_{\sqrt{I+\Ann(M)}=\fm}\left\{\frac{l(M/IM)}{e(I, M)} \right\} \text{ and \hspace{0.5em}} n(M)=\sup_{\sqrt{I+\Ann(M)}=\fm}\left\{\frac{l(M/IM)}{e(I, M)} \right\}.$$ Then we have
$$m(M)\geq \frac{1}{d!e(\overline{R})}$$ where $\overline{R}=R/\Ann(M)$. Moreover, if $M$ is quasi-unmixed, then we also have
$$n(M)< \infty.$$
\end{maintheoremA}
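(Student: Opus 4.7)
For both parts, the plan is first to replace $R$ by $\overline{R}=R/\Ann(M)$, so that $M$ becomes a faithful module over a local ring of dimension exactly $d$. For the upper bound, I would additionally pass to the $\fm$-adic completion; this leaves both $l(M/IM)$ and $e(I,M)$ unchanged and, by the quasi-unmixedness of $\widehat{M}$, places us in a complete equidimensional setting.

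\textbf{Lower bound.} My plan is to bootstrap from Lech's classical inequality for rings. The associativity formula
$$e(I,M)=\sum_{\fp\in\Assh(R)} l(M_\fp)\cdot e(I,R/\fp),$$
together with Lech's inequality $e(I,R/\fp)\leq d!\,e(R/\fp)\,l(R/(I+\fp))$ applied to each quotient domain $R/\fp$, immediately reduces the desired bound to
$$\sum_{\fp\in\Assh(R)} l(M_\fp)\cdot e(R/\fp)\cdot l(R/(I+\fp))\;\leq\; e(R)\cdot l(M/IM).$$
I expect to verify this by taking a prime filtration of $M$ in which each $\fp\in\Assh(R)$ appears as a filtration quotient exactly $l(M_\fp)$ times, and combining this accounting with the associativity decomposition $e(R)=\sum_{\fq\in\Assh(R)} l(R_\fq)\,e(R/\fq)$. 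The main obstacle is that a straightforward filtration argument only gives $l(M/IM)\leq\sum_i l(R/(I+\fp_i))$, the opposite of the direction needed; extracting a matching lower bound on $l(M/IM)$ matched to the weighted sum on the left requires careful bookkeeping across the filtration, perhaps also exploiting the contribution of the terms where $\dim R/\fp_i<d$.

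\textbf{Upper bound.} Assuming $M$ is quasi-unmixed, and after the reduction to $R$ complete equidimensional of dimension $d$ with $M$ faithful, my plan is to reduce the supremum over $\fm$-primary ideals $I$ to a supremum over parameter ideals via minimal reductions. After passing to a faithfully flat extension with infinite residue field---which leaves $l(M/IM)$ and $e(I,M)$ unchanged---each $\fm$-primary $I$ admits a minimal reduction $J=(\underline{x})=(x_1,\dots,x_d)$ with $e(J,M)=e(I,M)$ and $JM\subseteq IM$, so that $l(M/IM)\leq l(M/JM)$. Serre's Koszul--Euler characteristic formula
$$l(M/JM)=e(J,M)+\sum_{i\geq 1}(-1)^{i-1}l\bigl(H_i(\underline{x};M)\bigr)$$
then shows that finiteness of $n(M)$ would follow from a uniform upper bound on $\sum_{i\geq 1}l(H_i(\underline{x};M))$ as $\underline{x}$ ranges over systems of parameters on $M$. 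Establishing this uniform Koszul-homology bound in the equidimensional setting is the principal obstacle (and is precisely the source of the ``uniform behavior of Koszul homology'' results advertised in the abstract); the expected strategy combines Cohen's structure theorem, a Noether normalization $A=k[[t_1,\dots,t_d]]\hookrightarrow R$ viewing $M$ as a finitely generated $A$-module, and a careful analysis of how the Koszul homology of $\underline{x}$ on $M$ transforms under specialization of the parameters.
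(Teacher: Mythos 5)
Your opening reductions (pass to $\overline{R}=R/\Ann(M)$, complete, enlarge the residue field, use the associativity formula and Lech's inequality on each $R/P$ with $P\in\Assh(R)$) do match the paper's first moves, but for the lower bound the inequality you are left with, $\sum_{P}l_{R_P}(M_P)\,e(R/P)\,l(R/(I+P))\le e(R)\,l(M/IM)$, is precisely the hard content of Theorem~\ref{Thm:LechMod}, and the filtration bookkeeping you propose cannot produce it: a prime cyclic filtration only gives $l(M/IM)\le\sum_i l(R/(I+P_i))$ (from $0\to M'\to M\to M''\to 0$ one gets an upper bound, and a lower bound only by the top factor), which is the wrong direction, as you yourself note. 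The paper's essential new ingredient is Lemma~\ref{lem:CDHZ}: after a gonflement and completion so that the residue field is algebraically closed, one replaces $I(R/P)$ by its integral closure $\overline{IR/P}$ (harmless for multiplicities and for Lech's inequality) and proves $l(M/JM)\ge l(R/J)\cdot\rank(M)$ for torsion-free $M$ and integrally closed $\fm$-primary $J$ over a complete local domain; this rests on the Celikbas--Dao--Huneke--Zhang bound $l(M/JM)\ge \bar l(R/J)\rank(M)$ together with Watanabe's theorem on saturated chains of integrally closed ideals in the normalization (this is where $k=\bar k$ is used) to get $\bar l(R/J)=l(R/J)$. Combined with $l_{R_P}(M_P)\le l_{R_P}(R_P)\,l_{R_P}(M_P/PM_P)$ this yields the Claim $l\bigl((R/P)/\overline{IR/P}\bigr)l_{R_P}(M_P)\le l(M/IM)\,l_{R_P}(R_P)$ and hence the constant $d!\,e(\overline{R})$. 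Nothing in your proposal substitutes for this step, so the lower bound is not proved.

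For the upper bound, the reduction to parameter ideals via minimal reductions is fine, but your target ``a uniform upper bound on $\sum_{i\ge 1}l(H_i(\ux;M))$ over all systems of parameters'' is, read literally, false for quasi-unmixed $M$: such an absolute bound would bound $l(M/(\ux)M)-e((\ux),M)$ uniformly and force $M$ to be generalized Cohen--Macaulay (compare Theorem~\ref{max}). What one needs is a bound \emph{relative} to $e((\ux),M)$ or $l(M/(\ux)M)$, and that relative statement is not easier than the theorem: in the paper it is deduced \emph{from} Theorem A (Theorems~\ref{Thm:HomoBound} and~\ref{Uniform0}), and the only way the implication is reversed is the simultaneous induction on dimension of Remark~\ref{SVbyInduction}, which requires two inputs absent from your sketch: Schenzel's duality bound (Remark~\ref{sseq}) expressing $l(H_i(\ux;M))$ via Koszul homology of the lower-dimensional modules $\Ext_S^{n-j}(M,S)$, and the uniform multiplicity comparison $\sup_I e(I,R/P)/e(I,M)<\infty$ of Equation~\eqref{eqn:supR/P}, whose proof uses Huneke's uniform Artin--Rees theorem (Lemma~\ref{lem:Huneke}). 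The paper's primary proof of $n(M)<\infty$ avoids Koszul homology entirely: an extended degree ($\hdeg$ or $\udeg$) satisfies $\Deg(I,M)\ge\Deg(I,M/xM)$ for generic $x$, whence $l(M/IM)\le\Deg(I,M)=\sum_{P\in\Lambda}e(I,R/P)$ for a finite set of primes, and then the same Artin--Rees-based induction bounds each $e(I,R/P)/e(I,M)$. Your proposed Noether normalization/specialization analysis is not developed and supplies neither mechanism, so this half also has a genuine gap.
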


As an application of Theorem A, we obtain the following result on Koszul homology:

\begin{maintheoremB}[Theorem \ref{Uniform0}]
Let $R$ be a Noetherian local ring, and let $M$ be a finitely generated quasi-unmixed $R$-module of dimension $d$.  For
every $\varepsilon>0$, there exists $t_0$ such that, for all $t \geq t_0$, all systems of parameters $\ux:= x_1, \ldots, x_d$ of
$M$, and all $1 \leq i \leq d$,
\[
\frac{l(H_i(x_1^t, \ldots, x_d^t; M))}{l(M/(x_1^t, \ldots, x_d^t)M)}
<\varepsilon.
\]
In fact, there exists a constant $K$ such that for all $t \geq
1$, all systems of parameters $\ux:= x_1, \ldots, x_d$ of
$M$, and all $1 \leq i \leq d$,
\[
\frac{l(H_i(x_1^t, \ldots, x_d^t; M))}{l(M/(x_1^t, \ldots, x_d^t)M)}
\le \frac K{t^i}.
\]
\end{maintheoremB}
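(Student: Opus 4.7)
Set $I_t := (x_1^t, \ldots, x_d^t)$. I focus on the stronger statement $l(H_i(\ux^t; M))/l(M/I_tM) \leq K/t^i$; the $\varepsilon$-version then follows by choosing $t_0 \geq K/\varepsilon$, since then $K/t^i \leq K/t \leq \varepsilon$ for every $i \geq 1$ and $t \geq t_0$. For the denominator, observe that $\sqrt{I_t + \Ann(M)} = \fm$ because $\ux$ is a s.o.p.\ of $M$; applying the lower-bound half of Theorem A to $M$ and $I_t$ yields
\[
l(M/I_tM) \;\geq\; \frac{e(I_t, M)}{d!\, e(\overline{R})} \;=\; \frac{t^d\, e(\ux; M)}{d!\, e(\overline{R})} \;\geq\; \frac{t^d}{d!\, e(\overline{R})},
\]
using $e(I_t, M) = t^d\, e(\ux; M)$ and $e(\ux; M) \geq 1$. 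So it suffices to produce a uniform upper bound $l(H_i(\ux^t; M)) \leq C(M)\, t^{d-i}$, with $C(M)$ depending only on $M$.

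The top case $i = d$ is trivial: $H_d(\ux^t; M) = (0 :_M I_t) \subseteq H^0_\fm(M)$, and quasi-unmixedness forces $\widehat{M}$ to have no zero-dimensional associated primes (when $d \geq 1$), whence $H^0_\fm(M) = H^0_{\fm\widehat{R}}(\widehat{M}) = 0$. For $1 \leq i < d$, I plan to proceed by descending induction on $i$, using the long exact sequence of Koszul homology arising from $K_\bullet(\ux^t; M) \cong K_\bullet(\ux'^t; M) \otimes K_\bullet(x_d^t; R)$, where $\ux' := x_1, \ldots, x_{d-1}$; this yields
\[
l(H_i(\ux^t; M)) \;\leq\; l\bigl(H_i(\ux'^t; M)/x_d^t H_i(\ux'^t; M)\bigr) + l\bigl(0 :_{H_{i-1}(\ux'^t; M)} x_d^t\bigr).
\]
Since each $H_j(\ux'^t; M)$ is annihilated by $(\ux')$, it is supported in dimension $\leq 1$, and I would control its length by passing through a prime filtration of $M$ and reducing to the case $M = R/P$ for a minimal prime $P$ of $\Supp_R M$, where quasi-unmixedness is automatic.

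The principal obstacle is propagating uniformity (independence of $\ux$ and $t$) through the induction, since quotients of $M$ by elements of a partial s.o.p.\ need not remain quasi-unmixed, so Theorem A cannot be applied to them directly. I anticipate resolving this by working throughout with $\widehat{M}$ (which preserves lengths and Koszul homology), exploiting its equidimensional decomposition along minimal primes, and selecting filter-regular elements adapted to $M$ so that the inductive error terms are controlled by intrinsic invariants of $M$ alone. Once the uniform bound $l(H_i(\ux^t; M)) \leq C(M)\, t^{d-i}$ is in hand, combining it with the lower bound on $l(M/I_tM)$ above yields the desired estimate with $K := C(M) \cdot d!\, e(\overline{R})$.
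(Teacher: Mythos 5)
The central reduction in your plan is not valid: after bounding the denominator below by $t^d/(d!\,e(\overline R))$ (discarding the factor $e((\ux),M)\ge 1$), you ask for an absolute bound $l(H_i(x_1^t,\dots,x_d^t;M))\le C(M)\,t^{d-i}$ with $C(M)$ independent of the system of parameters. No such bound exists: already at $t=1$, $i=1$ it would say that $l(H_1(\ux;M))$ is bounded over all systems of parameters, which fails for quasi-unmixed modules that are not generalized Cohen--Macaulay. Concretely, take $R=M=k[[x,y,z,w,u]]/\bigl((x,y)\cap(z,w)\bigr)$, which is complete and equidimensional of dimension $3$, hence quasi-unmixed; using the exact sequence $0\to R\to S/(x,y)\oplus S/(z,w)\to S/(x,y,z,w)\to 0$ (with $S=k[[x,y,z,w,u]]$) and the systems of parameters $x+z,\ y+w,\ u^n$, one computes $l\bigl(H_1(x+z,y+w,u^n;R)\bigr)=n$, which is unbounded as $n\to\infty$. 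The true numerator bound must scale with the parameter ideal, e.g.\ $l(H_i(\ux^t;M))\le C\,t^{d-i}e((\ux),M)$, and the factor $e((\ux),M)$ must then cancel against $l(M/(\ux^t)M)\ge e((\ux^t),M)=t^d e((\ux),M)$; by replacing $e((\ux),M)$ with $1$ in the denominator only, your reduction throws away exactly the cancellation that makes the theorem true. (A smaller slip: quasi-unmixedness does not force $H^0_\fm(M)=0$; take $M=R\oplus R/\fm$ over $R=k[[x,y]]$, which is quasi-unmixed since $R/\Ann(M)=R$ is a domain. One only gets $H_d(\ux^t;M)\subseteq H^0_\fm(M)$, which still handles $i=d$.)

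Beyond this, the inductive scheme you sketch does not supply the uniformity in $\ux$, and you acknowledge the obstruction yourself: you cannot ``select'' filter regular elements, because the estimate is quantified over \emph{all} systems of parameters, and the partial Koszul homologies $H_j(x_1^t,\dots,x_{d-1}^t;M)$ and the quotients occurring in your descending induction vary with $\ux$ and need not remain quasi-unmixed. In the paper the dimension drop producing the factor $t^{-i}$ comes not from the Koszul long exact sequence in the last variable but from the duality bound of Remark~\ref{sseq}, which bounds $l(H_i(\ux^t;M))$ by Koszul homologies of the \emph{fixed} modules $\Ext_S^{n-j}(M,S)$ of dimension $\le d-i$; uniformity is then furnished by Theorem~\ref{Thm:HomoBound} (a uniform bound for $l(H_j(\ux;D))/l(D/(\ux)D)$ over all $\fm$-primary sequences, proved by induction on dimension from Theorem A together with Lemmas~\ref{lem:Length}--\ref{ses}), by Theorem~\ref{Thm:Vogel} to pass from colength to multiplicity, and by the multiplicity comparison~\eqref{eqn:supR/P} (resting on uniform Artin--Rees, Lemma~\ref{lem:Huneke}) to compare $e((\ux),R/P)$ with $e((\ux),M)$; the $e((\ux),M)$ factors then cancel in the ratio. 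Some uniform input of this kind is indispensable, and your outline as it stands offers no substitute for it.
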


It is well known that the ratio in Theorem B tends to $0$ for any {\it fixed} system of parameters $\ux$.  What we achieve in Theorem B is a {\it uniform} convergence. We also point out that our Theorem A says that $m(M)$ is bounded below by $\frac{1}{d!e(\overline{R})}$, which is independent of $M$ (and only depends on $R/\Ann(M)$). One cannot expect the same for the upper bound $n(M)$:
\begin{example}
\label{nlarge}
Let $R=k[[x,y]]$ and let $M_t=\fm^t=(x,y)^t$.  Then the $M_t$ are all faithful $R$-modules of rank one, but clearly
$$n(M_t)\geq \frac{l(M_t/\fm M_t)}{e(\fm, M_t)}=\frac{l(\fm^t/\fm^{t+1})}{e(R)}=t+1.$$ Therefore, there cannot exist a constant $c$ such that $n(M_t)\leq c$ works for all $M_t$.
\end{example}
Nonetheless, inspired by this example, we will see in Remark \ref{RemMinGen} that $n(M)/\mu(M)$ is indeed bounded above by a constant depending only on $R/\Ann(M)$.

This paper is organized as follows. In Section 2 we prove Conjecture \ref{conj:Vogel}, which is the second part of the Theorem A, and we also prove some results about the behavior of the invariant $n(M)$ under base change. In Section 3 we extend the classical version of Lech's inequality and  prove the first part of Theorem A. In Section 4 we give many applications of Theorem A, prove Theorem B, and obtain an alternative proof of Conjecture \ref{conj:Vogel}. In the Appendix, we establish global versions of results in Section 4.

\vspace{1em}

\noindent\textbf{Acknowledgement}: The authors would like to thank Mel Hochster and Craig Huneke for valuable discussions, as well as Le Tuan Hoa for his comments. Partial results of this paper were done while the third author was visiting University of Utah. He would like to thank University of Utah for its hospitality and support.  The first author has been partially supported by NSF Grants \#1401384 and \#0943832.  The second author is supported in part by NSF Grant \#1836867/1600198 and NSF CAREER Grant DMS \#1252860/1501102 when preparing this paper. The third author is partially supported by a fund of Vietnam National Foundation for Science and Technology Development (NAFOSTED) under grant number 101.04-2017.10. Finally, the authors would like to thank the referee for his/her comments, which led to Remark \ref{RemMinGen}.

\section{Finiteness of $n(M)$: resolving the St\"{u}ckrad--Vogel conjecture}

To prove the St\"{u}ckrad--Vogel conjecture, we need the concept of extended degree of a finitely generated module introduced by Vasconcelos in \cite{Vas98, VasSixLec98}.

\begin{definition}\label{Def:Edeg}
Let $(R,\fm)$ be a Noetherian local ring with infinite residue field. Let $\mathcal{M}(R)$ denote the category of finitely generated $R$-modules. An {\it extended degree} on $\mathcal{M}(R)$ with respect to an $\fm$-primary ideal $I$ is a numerical function
$$\Deg(I,\bullet)\colon \mathcal{M}(R)\to \mathbb{R}$$ satisfying the following conditions:
\begin{enumerate}
  \item $\Deg(I, M)=\Deg(I, \overline{M})+l(H_{\fm}^0(M))$, where $\overline{M}=M/H_{\fm}^0(M)$;
  \item $\Deg(I, M)\geq \Deg(I, M/xM)$ for every generic element $x\in I-\fm I$ of $M$;
  \item If $M$ is Cohen-Macaulay then $\Deg(I, M)=e(I, M)$.
\end{enumerate}
\end{definition}

The original definition in \cite{VasSixLec98} only deals with the case $I=\fm$. The above definition was taken from \cite[Definition 5.3]{CQ17}. The first question is whether, given a Noetherian local ring $(R,\fm)$, an extended degree function exists. This question was settled in the affirmative by Vasconcelos (\cite{Vas98, VasSixLec98}), who showed that {\it homological degree} is an example of extended degree (when the residue field is infinite).\footnote{Here again, Vasconcelos's papers \cite{Vas98, VasSixLec98} focus on the case $I=\fm$, and in fact the main case Vasconcelos considered is the graded case. However the proofs in \cite{Vas98, VasSixLec98} work in the general set up, and we refer to \cite{CQ17} for more details.}

\begin{definition}
Let $(R,\fm)$ be a homomorphic image of a Gorenstein local ring $(S,\fn)$ of dimension $n$, and let $M$ be a finitely generated $R$-module of dimension $d$. Then the {\it homological degree}, $\hdeg(I, M)$, of $M$ with respect to an $\fm$-primary ideal $I$ is defined by the following recursive formula: $$\hdeg(I, M)=e(I, M)+\sum_{i=n-d+1}^{n}\binom{d-1}{i-n+d-1}\hdeg(I, \Ext_S^i(M, S)).$$
\end{definition}

We note that the above definition is recursive on dimension since $\dim \Ext_S^i(M, S)\leq n-i<d=\dim M$ for all $i=n-d+1,\dots, n$. For a long time, the homological degree was the only known explicit example of an extended degree. Quite recently in \cite{CQ17}, Cuong and the third author discovered another extended degree, this one defined in terms of the {\it Cohen-Macaulay deviated sequence} $\{U_i(M)\}_{i=0}^{d-1}$ of $M$. Roughly speaking, $U_i(M)$ is the unmixed component of $M/(x_{i+2},\dots,x_d)M$ for a certain carefully chosen system of parameters $x_1,\dots,x_d$ of $M$.  It is shown in \cite[Theorem 4.4]{CQ17} that this unmixed component is independent of the choice of $x_1,\dots,x_d$ as long as $x_1,\dots,x_d$ is a {\it $C$-system of parameters of $M$}, which always exists when $R$ is a homomorphic image of a Cohen-Macaulay local ring. Thus, $\{U_i(M)\}_{i=0}^{d-1}$ is a sequence of finitely generated $R$-modules depending only on $M$. Note that $U_{d-1}(M)$ is just the unmixed component of $M$.  We refer to \cite[Section 4]{CQ17} for more details.

\begin{definition}
Let $(R,\fm)$ be a homomorphic image of a Cohen-Macaulay local ring, let $M$ be a finitely generated $R$-module of dimension $d$, and let $U_i(M)$, $0\leq i\leq d-1$, be the Cohen-Macaulay deviated sequence of $M$. We define the {\it unmixed degree} of $M$ with respect to an $\fm$-primary ideal $I$, denoted $\udeg(I, M)$, as follows: $$\udeg(I, M)=e(I, M)+\sum_{i=0}^{d-1}\delta_{i,\dim U_i(M)}e(I, U_i(M)).$$
\end{definition}

It is shown in \cite[Theorem 5.18]{CQ17} that $\udeg(I,\bullet)$ is an extended degree (when the residue field is infinite). We make an elementary but important observation that, for a fixed finitely generated module $M$, $\hdeg(I, M)$ (resp. $\udeg(I, M)$) is a {\it finite} sum $\sum_i e(I, M_i)$, where $\{M_i\}$ only depends on $M$: this is clear from the definition for $\udeg(I, M)$ and is easily seen by induction for $\hdeg(I, M)$. Therefore, by the associativity formula for multiplicities, for a fixed finitely generated $R$-module $M$, there exists a {\it finite} collection of prime ideals $\Lambda(M)=\Lambda \subseteq \mathrm{Supp(M)}$ (allowing repetition) such that
\begin{equation}
\label{eqn:Finite}
\hdeg(I, M)=\sum_{P\in\Lambda}e(I, R/P), \text{ and similarly for $\udeg(I, M)$}.
\end{equation}

Now we are ready to state and prove our main result in this section. We recall that a finitely generated $R$-module $M$ is called {\it quasi-unmixed} if $\widehat{M}$ is equidimensional. This is equivalent to the condition that $\widehat{\overline{R}}$ be equidimensional where $\overline{R}=R/\Ann(M)$.

\begin{theorem}
\label{Thm:Vogel}
Let $(R,\fm)$ be a Noetherian local ring, and let $M$ be a finitely generated quasi-unmixed $R$-module. Then we have
$$n(M)=\sup_{\sqrt{I+\Ann(M)}=\fm}\left\{\frac{l(M/IM)}{e(I, M)} \right\}<\infty.$$
\end{theorem}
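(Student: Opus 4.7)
My plan is to bound $l(M/IM)$ from above by the homological degree $\hdeg(I,M)$ and then use the quasi-unmixed hypothesis together with the finite-sum decomposition \eqref{eqn:Finite} to compare $\hdeg(I,M)$ with $e(I,M)$. Throughout I would first reduce to the case where $R$ is complete with infinite residue field, which preserves $l(M/IM)$, $e(I,M)$, and the quasi-unmixed property, and makes $R$ a homomorphic image of a Gorenstein local ring so that $\hdeg(I,\cdot)$ and \eqref{eqn:Finite} are available.

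First I would prove $l(M/IM) \leq \hdeg(I, M)$ by induction on $d = \dim M$. When $d = 0$, $M = H_{\fm}^0(M)$ and axiom (1) of Definition~\ref{Def:Edeg} gives $\hdeg(I,M) = l(M) \geq l(M/IM)$. When $d \geq 1$, pick $x \in I \setminus \fm I$ generic for $M$ so that $\dim M/xM = d-1$; since $(M/xM)/I(M/xM) = M/IM$, the inductive hypothesis applied to $M/xM$ combined with axiom (2) yields $l(M/IM) \leq \hdeg(I, M/xM) \leq \hdeg(I,M)$.

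Next I would write $\hdeg(I,M) = \sum_{P \in \Lambda} e(I, R/P)$ using \eqref{eqn:Finite}, where $\Lambda$ is a finite multiset of primes in $\Supp(M)$ depending only on $M$. Partition $\Lambda = \Lambda_d \sqcup \Lambda_{<d}$ according to whether $\dim R/P$ equals $d$. Since $M$ is quasi-unmixed, every $P \in \Supp(M)$ with $\dim R/P = d$ is minimal in $\Supp(M)$ and therefore lies in $\Assh(M)$; for each such $P$ the associativity formula $e(I, M) = \sum_{Q \in \Assh(M)} l(M_Q)\,e(I, R/Q)$ yields $e(I, R/P) \leq e(I, M)/l(M_P)$, a uniform constant multiple of $e(I, M)$. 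For $P \in \Lambda_{<d}$, I would exploit that $R/\Ann(M)$ is formally equidimensional to pick parameters $y_1, \ldots, y_{d-\dim R/P} \in P$ forming part of a system of parameters of $M$; comparing $R/P$ to the top-dimensional quotient of $M/(y_1, \ldots, y_{d-\dim R/P})M$ and inducting on $d - \dim R/P$ should produce a constant $C_P$ depending only on $M$ and $P$ with $e(I, R/P) \leq C_P \cdot e(I, M)$.

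The hard part will be precisely this last comparison for $P \in \Lambda_{<d}$: the bound on $e(I, R/P)/e(I,M)$ must be uniform in $I$, and it is here that quasi-unmixedness is indispensable (consistent with the ``only if'' direction already established by St\"uckrad--Vogel, which shows the conclusion fails without it). Combining both types of bounds produces a constant $C$ depending only on $M$ with $\hdeg(I, M) \leq C \cdot e(I, M)$, and together with the first step this gives $l(M/IM) \leq C \cdot e(I, M)$ for every $\fm$-primary $I$, so $n(M) \leq C < \infty$.
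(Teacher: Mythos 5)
Your first half matches the paper's argument: the reduction to a complete ring with infinite residue field, the bound $l(M/IM)\leq \hdeg(I,M)$ via the extended-degree axioms, the decomposition \eqref{eqn:Finite}, and the treatment of primes $P$ with $\dim R/P=\dim M$ via the associativity formula are all essentially what the paper does. But for $P\in\Lambda_{<d}$ you have only a sketch, and that is exactly where the entire difficulty of the theorem sits, as you yourself note. Your plan --- choose parameters $y_1,\dots,y_t\in P$ on $M$ and compare $e(I,R/P)$ with the top-dimensional part of $M/(y_1,\dots,y_t)M$ --- merely relocates the problem: after noting $e(I,R/P)\leq e(I,M/(y_1,\dots,y_t)M)$, you still need an inequality of the form $e(I,M/yM)\leq C\,e(I,M)$ with $C$ independent of the $\fm$-primary ideal $I$, and no argument is offered for this. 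An induction on $d-\dim R/P$ cannot supply it, because each step of that induction requires precisely such a one-step uniform comparison; nor does a fixed-$I$ argument (superficial elements, say) give uniformity in $I$.

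In the paper this step is handled by first reducing, via a prime $P_0\in\Ass M$ with $\dim R/P_0=\dim M$ and $P_0\subseteq P$ (this is where equidimensionality of $\widehat M$ is used), to proving \eqref{eqn:supR/P} for a complete local \emph{domain} $R$, and then by an induction on dimension whose engine is Lemma~\ref{lem:Huneke}: for a nonzerodivisor $x$ there is a constant $k$ with $e(I,R/(x))\leq k\,e(I,R)$ for \emph{all} $\fm$-primary $I$, proved using Huneke's uniform Artin--Rees theorem ($I^n\cap(x)\subseteq I^{n-k}x$ for all $I$), together with the fact that $R/(x)$ is equidimensional in a complete domain so that $e(I,R/(x))\geq e(I,R/Q)$ for a minimal prime $Q$ of $(x)$ inside $P$. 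Without this uniform Artin--Rees input (or a genuine substitute), your proposal does not yet prove the theorem; identifying and proving that uniform multiplicity comparison is the missing core of the argument.
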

\begin{proof}
By passing to the $\frak m$-adic completion, we can assume that $R$ is a complete local ring and $M$ is equidimensional. We can assume also that the residue field is infinity. We now consider $\Deg(I, M)=\hdeg(I, M)$ (or $\Deg(I, M)=\udeg(I, M)$), which is an extended degree.  Thus, by Definition \ref{Def:Edeg} (2) we know that, for every generic element $x\in I-\fm I$ of $M$, we have $$\Deg(I, M)\geq \Deg(I, M/xM).$$ Therefore, for a generic sequence of elements $x_1,\dots,x_d$ of $M$ (we may choose $x_i$ sufficiently general such that $x_1,\dots,x_d$ is a system of parameters of $M$), we have
$$\Deg(I, M)\geq \Deg(I, M/x_1M)\geq \cdots \geq \Deg(I, M/(x_1,\dots,x_d)M)=l(M/(x_1,\dots,x_d))M \geq l(M/IM),$$
where the equality is because $M/(x_1,\dots,x_d)M$ is Cohen-Macaulay and, thus
$$\Deg(I, M/(x_1,\dots,x_d)M)=e(I, M/(x_1,\dots,x_d)M)=l(M/(x_1,\dots,x_d)M).$$
Thus, it is enough to prove that
$$\sup_{\sqrt{I}=\fm}\left\{\frac{\Deg(I, M)}{e(I, M)}\right\}<\infty.$$
At this point we invoke (\ref{eqn:Finite}): it is enough to prove that, for every $P\in\Supp (M)$,
\begin{equation}\label{eqn:supR/P}
\sup_{\sqrt{I}=\fm}\left\{\frac{e(I, R/P)}{e(I, M)}\right\}<\infty.
\end{equation}

In order to prove (\ref{eqn:supR/P}), we use induction on $\dim M$. If $\dim M=0$, (\ref{eqn:supR/P}) is obvious. In the general case, if $\dim R/P=\dim M$ then $e(I, R/P) \le e(I, M)$ by the associativity of multiplicities, so (\ref{eqn:supR/P}) is again obvious. Now we assume $\dim R/P<\dim R$.  We choose a prime ideal $P_0 \in \Ass M$ such that $\dim R/P_0 =\dim M$ and $P_0 \subseteq P$. We have $e(I, R/P_0) \le e(I,M)$ by the associativity of multiplicities again. Therefore, it is enough to prove that, for every $P\in\Spec R$,
\begin{equation}\label{eqn:supR/P}
\sup_{\sqrt{I}=\fm}\left\{\frac{e(I, R/P)}{e(I, R)}\right\}<\infty,
\end{equation}
where $R$ is a complete local domain and $\dim R/P < \dim R$. We pick $0\neq x\in P$ and a minimal prime $Q$ of $(x)$ such that $Q\subseteq P$. Since $R$ is a complete local domain, $R/(x)$ is equidimensional; in particular, $\dim R/(x)=\dim R/Q$, and, thus, $e(I, R/(x))\geq e(I, R/Q)$. Now we write
$$
\frac{e(I, R/P)}{e(I, R)}=\frac{e(I, R/P)}{e(I, R/Q)}\cdot\frac{e(I, R/Q)}{e(I, R/(x))}\cdot \frac{e(I, R/(x))}{e(I, R)}\leq \frac{e(I, R/P)}{e(I, R/Q)}\cdot\frac{e(I, R/(x))}{e(I, R)}.
$$

Since $\dim R/Q<\dim R$, $\sup_{\sqrt{I}=\fm}\left\{\frac{e(I, R/P)}{e(I, R/Q)}\right\}<\infty$ by induction, which means there exists a constant $c_1$ such that $\frac{e(I, R/P)}{e(I, R/Q)}\leq c_1$ for all $\fm$-primary ideals $I$. Since $x$ is a nonzerodivisor in a complete local ring $R$, by Lemma \ref{lem:Huneke} below, we know that there exists a constant $c_2$ such that $\frac{e(I, R/(x))}{e(I, R)}\leq c_2$ for all $\fm$-primary ideals $I$. Thus, putting $c=c_1c_2$ we see that
$$
\frac{e(I, R/P)}{e(I, R)}\leq c
$$
for all $\fm$-primary ideals $I$. This finishes the proof.
\end{proof}

\begin{lemma}\label{lem:Huneke}
Let $(R,\fm)$ be a Noetherian complete local ring, and let $x$ be a nonzerodivisor on $R$. Then there exists a constant $k$ such that, for all $\fm$-primary ideals $I$, we have $$e(I, R/(x))\leq k\cdot e(I, R).$$
\end{lemma}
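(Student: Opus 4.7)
The natural move is to exploit the short exact sequence
\[
0 \lo (x)/(I^n\cap (x)) \lo R/I^n \lo R/(I^n+(x)) \lo 0,
\]
which reduces the lemma to producing a good lower bound, uniform in $I$, for $l((x)/(I^n\cap (x)))$. The usual Artin--Rees lemma supplies the containment $I^n\cap (x)\subseteq xI^{n-c}$ for some $c$, but that $c$ depends on $I$. The plan is to replace this with Huneke's \emph{uniform} Artin--Rees theorem: since $R$ is complete local, it is excellent of finite Krull dimension, so applied to the inclusion $(x)\hookrightarrow R$ it produces a single constant $c$ depending only on $R$ and $x$, with
\[
I^n\cap (x)\subseteq xI^{n-c}
\]
for every ideal $I$ of $R$ and every $n\ge c$. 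We will show $k:=c$ works.

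Because $x$ is a nonzerodivisor, multiplication by $x$ gives an isomorphism $R\xrightarrow{\sim}(x)$ sending $I^{n-c}$ to $xI^{n-c}$, so $l((x)/xI^{n-c})=l(R/I^{n-c})$; combined with the uniform containment and the above exact sequence we obtain
\[
l(R/(I^n+(x)))\le l(R/I^n)-l(R/I^{n-c})\qquad\text{for all } n\ge c.
\]

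To pass to multiplicities, note that $\dim R/(x)=d-1$ with $d=\dim R$, since $x$ lies outside every associated (in particular, minimal) prime. For each fixed $\fm$-primary $I$ and $n\gg 0$, $l(R/I^n)$ equals a polynomial in $n$ of degree $d$ with leading term $\tfrac{e(I,R)}{d!}n^d$; a direct computation shows that $l(R/I^n)-l(R/I^{n-c})$ is then a polynomial of degree $d-1$ with leading coefficient $\tfrac{c\cdot e(I,R)}{(d-1)!}$. Dividing the displayed inequality by $n^{d-1}$ and letting $n\to\infty$ (for fixed $I$), the left-hand side converges to $e(I,R/(x))/(d-1)!$, while the right-hand side converges to $c\cdot e(I,R)/(d-1)!$, yielding
\[
e(I,R/(x))\le c\cdot e(I,R),
\]
as desired.

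The entire obstacle is packed into securing a $c$ independent of $I$; once Huneke's uniform Artin--Rees theorem is invoked, the remainder is a routine comparison of Hilbert--Samuel leading coefficients. One subtle point worth flagging when writing out the details is that the polynomial description of $l(R/I^n)$ only begins at some $n_0$ which itself may depend on $I$, but this is harmless since we only need the limit as $n\to\infty$ for each fixed $I$.
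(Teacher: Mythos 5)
Your proposal is correct and follows essentially the same route as the paper: Huneke's uniform Artin--Rees theorem applied to $(x)\subseteq R$, the resulting length inequality $l(R/(I^n+(x)))\le l(R/I^n)-l(R/I^{n-c})$, and a comparison of leading terms of the Hilbert--Samuel functions. The only cosmetic difference is that you phrase the kernel term as $(x)/(I^n\cap(x))$ rather than $R/(I^n:x)$ (these are isomorphic via multiplication by $x$) and you write out the degree-$(d-1)$ leading-coefficient computation that the paper leaves implicit.
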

\begin{proof}
We consider the short exact sequence:
$$
0\to \frac{R}{I^n:x} \xrightarrow{\cdot x}\frac{R}{I^n}\to \frac{R}{I^n+(x)}\to 0
$$
Note that if $y\in I^n:x$, then $xy\in I^n\cap(x)$. By Huneke's uniform Artin-Rees lemma \cite[Theorem 4.12]{Hun92}, there exists a constant $k$ such that, for all $I\subseteq R$, $I^n\cap(x)\subseteq I^{n-k}x$. Thus, $xy\in I^{n-k}x$, and so $y\in I^{n-k}$ since $x$ is a nonzerodivisor. This shows that $I^n:x\subseteq I^{n-k}$ for all $\fm$-primary ideals $I$. By the short exact sequence above, we know that
$$
l\left(\frac{R}{I^n+(x)}\right)\leq l\left(\frac{R}{I^n}\right)-l\left(\frac{R}{I^{n-k}}\right)
$$
Now we let $n\to\infty$ and compute the corresponding Hilbert function to see that
$$e(I, R/(x))\leq k\cdot e(I, R)$$
for all $\fm$-primary ideals $I$.
\end{proof}

\begin{remark}\label{GlobalSV}
Assume that $R$ is a (not necessarily local) Noetherian ring such that
$R$ is a homomorphic image of a Noetherian
Gorenstein ring $S$ with $\dim(S) < \infty$. Further assume that $M$ is a finitely generated $R$-module such that $R/\Ann(M)$ is locally equidimensional and satisfies the uniform
Artin-Rees property (e.g., \cite[Theorem 4.12]{Hun92}) and that, for all $P \in \Supp(M)$, the residue field of $R_P$ is infinite. Then the proof of Theorem~\ref{Thm:Vogel} actually gives us a (global) upper bound of $n(M_P)$ for all $P \in \Supp(M)$. Also see Remark~\ref{SVbyInduction-g} for an alternative treatment.
\end{remark}

Given Theorem \ref{Thm:Vogel}, it is quite natural to ask whether the supremum is actually a maximum, i.e., whether $n(M)$ is attained at some $I$. We do not know the answer to this question. Below we prove a special case. Recall that a finitely generated $R$-module $M$ is called {\it generalized Cohen-Macaulay} if $H_{\fm}^i(M)$ has finite length for all $i<\dim (M)$.

\begin{theorem}
\label{max}
Let $(R,\fm)$ be a Noetherian local ring with infinite residue field, and let $M$ be a finitely generated $R$-module.
If $M$ is generalized Cohen-Macaulay (e.g., $\dim(M) = 1$), then $n(M)$ is attained, i.e., $n(M) = l(M/IM)/e(I,M)$ for some
$\fm$-primary ideal $I$.
\end{theorem}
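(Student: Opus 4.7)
The plan is to combine the classical Cuong--Trung inequality for generalized Cohen--Macaulay modules with an elementary finiteness-of-positive-integers argument. Set
$$C := \sum_{i=0}^{d-1}\binom{d-1}{i}l(H_\fm^i(M)),$$
which is finite because $M$ is generalized Cohen--Macaulay. The Cuong--Trung inequality states that $l(M/\ux M) - e(\ux, M) \leq C$ for every system of parameters $\ux$ of $M$. I will first extend this bound to arbitrary ideals $I$ with $\sqrt{I + \Ann(M)} = \fm$: using the infinite residue field hypothesis, choose a minimal reduction of the image of $I$ in $\overline{R} := R/\Ann(M)$ and lift its generators to elements $x_1,\ldots,x_d \in I$. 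Then $J := (x_1,\ldots,x_d)$ is a parameter ideal of $M$ contained in $I$ with $e(J, M) = e(I, M)$, so
$$l(M/IM) - e(I, M) \;\leq\; l(M/JM) - e(J, M) \;\leq\; C.$$

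Next, I split into two cases. If $M$ is Cohen--Macaulay, then $C = 0$ and any parameter ideal of $M$ attains $n(M) = 1$. Otherwise $C > 0$, and a standard system of parameters (which exists in a generalized Cohen--Macaulay module by choosing each $x_i$ in a sufficiently high power of $\fm$) realizes equality in the Cuong--Trung bound, giving $n(M) \geq 1 + C/e(\ux, M) > 1$. Set $\varepsilon := (n(M) - 1)/2 > 0$. Any $I$ with $l(M/IM)/e(I,M) \geq 1 + \varepsilon$ satisfies $e(I,M) \leq C/\varepsilon$ by the extended bound, and therefore $l(M/IM) \leq n(M) \cdot C/\varepsilon$ using the finiteness of $n(M)$ from Theorem~\ref{Thm:Vogel}. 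Hence the ratios achieved on this subfamily form a finite subset of $\mathbb{Q}$, so the supremum over that subfamily is a maximum; since every ratio outside the subfamily is strictly less than $1+\varepsilon < n(M)$, this maximum must equal $n(M)$, and $n(M)$ is attained.

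The hard part will be the extension step: the Cuong--Trung estimate is stated only for systems of parameters, whereas $n(M)$ involves all $\fm$-primary ideals. Passing to the minimal reduction of $I$ in $\overline{R}$, which the infinite residue field hypothesis enables, is what bridges this gap. Once uniform control on $l(M/IM) - e(I,M)$ is in hand, the remaining argument is elementary: a set of rationals $p/q$ with bounded positive-integer $p$ and positive-integer $q$ is finite, so the supremum on the relevant subfamily is automatically a maximum.
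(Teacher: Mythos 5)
Your proposal is correct and follows essentially the same route as the paper's proof: reduce to parameter ideals via minimal reductions (using the infinite residue field), apply the generalized Cohen--Macaulay bound $l(M/(\ux)M)\le e((\ux),M)+C$ to force any ideal whose ratio is close to $n(M)$ to have bounded multiplicity and colength, and conclude that the supremum is taken over a finite set of rationals, hence attained. The minor embellishments (invoking standard systems of parameters to see $n(M)>1$, and citing Theorem~\ref{Thm:Vogel} for finiteness where the bound $n(M)\le 1+C$ already suffices) do not change the argument.
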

\begin{proof}
Since the residue field of $R$ is infinite, every $\fm$-primary ideal $I$ has a minimal reduction $(\ux)$ generated by a system of parameters of $M$. Because $\frac{l(M/IM)}{e(I, M)}\leq \frac {l(M/(\ux)M)}{e((\ux),M)}$ for any minimal reduction $(\ux)$ of $I$, we have
\[
n(M) = \sup\left\{\frac {l(M/(\ux)M)}{e((\ux),M)} \ \Big | \
\ux \text{ is a system of parameter of } M\right\}.
\]
When $M$ is Cohen-Macaulay, it is easy to see that $n(M) = 1 = \frac{l(M/(\ux)M)}{e((\ux),M)}$ for any ideal $(\ux)$ generated by a system of parameters of $M$. Thus, we assume that $M$ is not Cohen-Macaulay; hence, $n(M) > 1+\varepsilon$ for some $\varepsilon > 0$.

Since $M$ is generalized Cohen-Macaulay, it is well known that there exists $C > 0$ (e.g., $C = \sum_{i=0}^{d-1}\binom{d-1}{i}l(H_\fm^{i}(M))$ \cite[Theorem 3.18]{Schenzel98})
such that
\[
l(M/(\ux)M)  \le e((\ux),M) + C
\qquad \text{hence} \qquad
\frac {l(M/(\ux)M)}{e((\ux),M)} \le 1 + \frac C{e((\ux),M)}
\]
for all systems of parameters $\ux$ of $M$ (for example see \cite{Schenzel83} or \cite{SV78}). This shows that $n(M) \le
1 + \frac C{e(M)} < \infty$. There exists a positive
integer $N$ such that $1+ \frac Cn < 1+ \varepsilon$ for all $n > N$.
Therefore
\[
n(M) = \sup\left\{\frac {l(M/(\ux)M)}{e((\ux),M)} \ \Big | \
\ux \text{ is a system of parameters of $M$ such that }
e((\ux),M) \le N \right\}.
\]
However, the set of numbers on the right side is finite, so $n(M)$ is
must be attained at some system of parameters of $M$.
\end{proof}

\subsection{The behavior of $n(M)$ under base change} In this subsection we study the behavior of $n(M)$ under localization, flat local extension, and the killing of a parameter. We begin with a result on localization.

\begin{theorem}
\label{Thm:localization}
Let $(R, \fm)$ be a Noetherian local ring and $M$ a finitely generated $R$-module. Then for any $P \in \Supp (M)$, we have $n_{R_P}(M_P) \leq n_R(M)$. 
\end{theorem}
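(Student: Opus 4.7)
The plan is to show that for every ideal $J \subseteq R_P$ with $\sqrt{J + \Ann_{R_P}(M_P)} = PR_P$, one can construct a family $\{I_n\}_{n \geq 1}$ of $\fm$-primary ideals of $R$ (with $\sqrt{I_n + \Ann(M)} = \fm$) such that
\[
\liminf_{n \to \infty} \frac{l(M/I_n M)}{e(I_n, M)} \;\geq\; \frac{l_{R_P}(M_P/JM_P)}{e(J, M_P)};
\]
taking the supremum over $J$ then yields $n_R(M) \geq n_{R_P}(M_P)$. Two preliminary reductions simplify the setup: if $M$ is not quasi-unmixed then $n_R(M) = \infty$ by Theorem~\ref{Thm:Vogel} and the inequality is automatic, so I assume $M$ is quasi-unmixed; replacing $R$ by $R/\Ann(M)$, I also assume $M$ is faithful. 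A standard faithfully flat residue-field extension preserving $l$, $e$, and $n$ lets me further assume infinite residue fields.

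For the construction, set $\fa = J \cap R$, a $P$-primary ideal of $R$ with $\fa R_P = J$. Let $c = \dim R/P$ and choose $x_1, \ldots, x_c \in \fm$ lifting a sufficiently generic system of parameters of $R/P$; set $\fb = (x_1, \ldots, x_c)$ and $I_n := \fa + (x_1^n, \ldots, x_c^n)$. Each $I_n$ is $\fm$-primary because $\sqrt{\fa + \fb} = \sqrt{P + \fb} = \fm$. Writing $N := M/\fa M$, one has $\dim N = c$, $\Supp(N) = V(P)$, $\Assh(N) = \{P\}$, and $N_P = M_P/JM_P$. Because $(x_1^n, \ldots, x_c^n)$ is a parameter ideal on $N$, the inequality $l \geq e$ for parameter ideals together with the associativity formula $e(\fb, N) = l_{R_P}(N_P) \cdot e(\fb, R/P)$ yields
\[
l(M/I_n M) \;=\; l\bigl(N/(x_1^n, \ldots, x_c^n) N\bigr) \;\geq\; n^c \cdot e(\fb, N) \;=\; n^c \cdot l_{R_P}(M_P/JM_P) \cdot e(\fb, R/P).
\]

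The heart of the argument is the matching asymptotic upper bound
\[
\limsup_{n \to \infty} \frac{e(I_n, M)}{n^c} \;\leq\; e(J, M_P) \cdot e(\fb, R/P).
\]
Using the associativity formula $e(I_n, M) = \sum_{Q \in \Assh(M)} l_{R_Q}(M_Q) \cdot e(I_n, R/Q)$, I would treat the two types of associated primes separately. For $Q \in \Assh(M)$ with $Q \subseteq P$, working in the domain $R/Q$ and using the genericity of the $x_i$, a Hilbert--Samuel analysis of $\fa + (x_1^n, \ldots, x_c^n)$ should give $\lim_n e(I_n, R/Q)/n^c = e(JR_P/QR_P, R_P/QR_P) \cdot e(\fb, R/P)$. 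For $Q \not\subseteq P$, the support $V(\fa + Q)$ is a proper closed subset of the irreducible $V(P)$, so $\dim R/(\fa + Q) < c$, which forces $e(I_n, R/Q) = O(n^{c-1})$ and contributes nothing to the limit. Summing and invoking the associativity formula for $e(J, M_P)$ (over those $Q \in \Assh(M)$ with $Q \subseteq P$) completes the upper bound. Dividing the two estimates and letting $n \to \infty$ proves the theorem.

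The main obstacle is the multiplicity asymptotic in the $Q \subseteq P$ case: the identity $\lim_n e(\fa + (x_1^n, \ldots, x_c^n), R/Q)/n^c = e(JR_P/QR_P, R_P/QR_P) \cdot e(\fb, R/P)$. This is essentially a mixed-multiplicity statement for the pair $(\fa, \fb)$, and the delicate point is to arrange the genericity of the $x_i$ so that the ``at $P$'' behavior (governed by $\fa R_P$) and the ``along $R/P$'' behavior (governed by $\fb$) decouple cleanly in the asymptotic. Alternative routes include working directly with the bigraded Hilbert function of $(\fa^r + \fb^s)$, or identifying a suitable minimal reduction of $I_n$ adapted to this decomposition.
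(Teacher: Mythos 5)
Your overall architecture (reduce to quasi-unmixed and faithful, then show $n_R(M)\geq l_{R_P}(M_P/JM_P)/e(J,M_P)$ for each $J$ by producing $\fm$-primary ideals $I_n$ whose ratio approximates it) is sound, and your length lower bound $l(M/I_nM)\geq n^c\,l_{R_P}(M_P/JM_P)\,e(\fb,R/P)$ is fine. But the proof has a genuine gap exactly where you flag "the main obstacle": the asymptotic upper bound $\limsup_n e(\fa+(x_1^n,\dots,x_c^n),R/Q)/n^c \leq e(JR_P/QR_P,(R/Q)_P)\,e(\fb,R/P)$ for $Q\subseteq P$ is the entire content of the theorem in this approach, and nothing in your sketch proves it or reduces it to a citable fact. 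The natural tool, Lech's associativity formula, applies only to parameter ideals, and $\fa=J\cap R$ is not one; if you replace $\fa$ by a lifted parameter ideal (or a lift of a minimal reduction of $J$) inside $\fa$ so that Lech's formula applies, that smaller ideal acquires extraneous minimal primes $P'$ with $\dim R/P'=c$, and their terms $e(\cdot_{P'})e(\fb,R/P')$ contribute at the same leading order $n^c$, so this route only yields an upper bound that is too large. Two further points are glossed over: the genericity of $x_1,\dots,x_c$ must be a single choice valid for \emph{all} $n$ simultaneously (the bad loci a priori depend on $n$), and even the claim $e(I_n,R/Q)=O(n^{c-1})$ for $Q\not\subseteq P$ requires an argument, since $e(I_n,R/Q)$ is a multiplicity on a $d$-dimensional module and is not bounded by a naive dimension count of $V(\fa+Q)$. (A small citation slip: $n(M)=\infty$ in the non-quasi-unmixed case is the St\"uckrad--Vogel ``only if'' direction, not Theorem~\ref{Thm:Vogel}.)

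For comparison, the paper avoids this mixed-multiplicity asymptotic entirely. It first inducts on $\dim R/P$ to reduce to $\dim R/P=1$, and uses that $\kappa(P)$ is infinite to take $I\subseteq R_P$ to be a parameter ideal (a minimal reduction), lifted by prime avoidance to part of a system of parameters $x_1,\dots,x_{d-1}$ of $R$. It then completes this to a full system of parameters by the single element $x=y^t+z$, with $y$ in all minimal primes $Q\neq P$ of $M/IM$ but not in $P$, and $z\in P$ avoiding the other $Q$'s. Lech's associativity formula for parameter ideals then expresses both $l(M/(I,x)M)$ (via a lower bound) and $e((I,x),M)$ as sums over the $Q$'s, and the choice $x=y^t+z$ makes the $Q\neq P$ terms independent of $t$ while the $P$-term grows linearly in $t$; letting $t\to\infty$ isolates the ratio at $P$. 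This mechanism is precisely what substitutes for the decoupling you would need to prove, so as written your proposal does not constitute a proof.
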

\begin{proof}
We can assume $M$ is quasi-ummixed since otherwise $n(M)=\infty$ by \cite[Theorem 1]{SV78}, in which case there is nothing to prove. We can replace $R$ by $R/\Ann(M)$: this does not affect $n(M)$ or $n(M_P)$. Therefore, we can assume $R$ is quasi-unmixed. By \cite[Theorem 31.6]{Matsumura86}, this implies $R$ is equidimensional and catenary and that $R_P$ is also quasi-unmixed. We set $\dim (R)=d$. 


By induction it is enough to consider the case of $\dim (R/P)=1$. Since the residue field of $R_P$ is infinite, it suffices to show that
\[
n(M) \geq \frac{\length_{R_P}(M_P/IM_P)} { e(I, M_P)}\]
for all ideals $I$ generated by a system of parameters in $R_P$ (as in the proof of Theorem \ref{max}). We know $\dim R_P=\Ht P=d-1$ since $R$ is equidimensional and catenary; thus, for any such $I$, by prime avoidance, we can find elements $x_1, \ldots, x_{d-1} \in R$
 that  form part of a system of parameters in $R$ and that have images in $R_P$ that generate $I$.
So, abusing notation a bit, we will call $I = (x_1, \ldots, x_{d-1}) \subseteq R$.

Suppose $x\in R$ is such that $(I, x)$ is $\fm$-primary. Since $M$ is faithful, $x_1,\dots,x_{d-1}, x$ form a system of parameters on both $R$ and $M$. We have
\[
\length(M/(I, x)M) \geq e(x, M/IM)  = \sum_{Q \in \Min(M/IM)} e(x, R/Q)\length(M_Q/IM_Q),
\]
where the equality holds by the additivity property of multiplicity. By Lech's associativity formula for multiplicities for parameter ideals \cite{Lech57} (see also \cite{Nagata56}), we also have
\[
e ((I,x), M) = \sum_{Q \in \Min(M/IM)} e(x, R/Q)e(IR_Q, M_Q).
\]
Therefore, by definition,
\begin{align*}
n(M) &\geq \frac{\length(M/(I,x)M)}{e((I,x), M)}
\geq \frac{\sum_{Q \in \Min(M/IM)} e(x, R/Q)\length_{R_Q} (M_Q/IM_Q)}{\sum_{Q \in \Min(M/IM)}e(x, R/Q)e(IR_Q, M_Q)}.
\end{align*}
Now we let $ \displaystyle y \in \bigcap_{\substack{Q\in \Min(M/IM)\\Q \neq P}} Q \setminus P$ and $\displaystyle z \in P \setminus \bigcup_{\substack{Q\in \Min(M/IM)\\Q \neq P}}  Q$.
Observe that, for any $t \geq 1$, we can use $x=y^t + z$ to complete $I$ to a full system of parameters. In this case
\begin{equation*}
\frac{\sum_{Q} e(x, R/Q)\length_{R_Q} (M_Q/IM_Q)}{\sum_{Q}e(x, R/Q)e(IR_Q, M_Q)}
= \frac
{e(y^t, R/P)\length_{R_P} (M_P/IM_P) + \sum_{Q \neq P} e(z, R/Q)\length_{R_Q} (M_Q/IM_Q)}
{e(y^t, R/P)e(IR_P, M_P) + \sum_{Q \neq P}e(z, R/Q)e(IR_Q, M_Q)}.
\end{equation*}
Since $e(y^t, R/P) = t e (y, R/P)$, if we pass to the limit as $t$ approaches infinity we obtain
\[
n(M) \geq \frac{\length_{R_P} (M_P/IM_P)}{e(IR_P, M_P)}.
\qedhere
\]
\end{proof}

As a consequence, we show that the invariant $n(-)$ is non-decreasing under flat local extensions.

\begin{corollary}
\label{cor.flat}
Let $(R, \mf m)\to (S,\mf n)$ be a flat local extension of Noetherian local rings. Suppose $M$ is a finitely generated $R$-module. Then $n_R(M) \leq n_S(M \otimes_R S)$.
\end{corollary}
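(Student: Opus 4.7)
The plan is to reduce to Theorem \ref{Thm:localization} by localizing $S$ at a minimal prime of $\fm S$. Pick any minimal prime $P$ of $\fm S$ in $S$; since $R\to S$ is local, $\fm\subseteq P\cap R\subsetneq R$, so $P\cap R=\fm$ and $R\to S_P$ is a flat local extension with $0$-dimensional closed fiber $S_P/\fm S_P$. By flatness $\Ann_S(M\otimes_R S)=\Ann_R(M)\cdot S\subseteq \fm S\subseteq P$, so $P\in\Supp_S(M\otimes_R S)$, and Theorem \ref{Thm:localization} gives
$$n_S(M\otimes_R S)\ \geq\ n_{S_P}\bigl((M\otimes_R S)_P\bigr)\ =\ n_{S_P}(M\otimes_R S_P).$$
It therefore suffices to prove $n_R(M)\leq n_{S_P}(M\otimes_R S_P)$.

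The core of the argument is to show, for every $\fm$-primary ideal $I$ with $\sqrt{I+\Ann_R(M)}=\fm$, the identity
$$\frac{l_R(M/IM)}{e(I,M)}\ =\ \frac{l_{S_P}\bigl((M\otimes_R S_P)/IS_P(M\otimes_R S_P)\bigr)}{e(IS_P,\,M\otimes_R S_P)}.$$
Note $IS_P$ is admissible for $n_{S_P}$: since $\sqrt{\fm S_P}=PS_P$, we have $\sqrt{IS_P+\Ann_{S_P}(M\otimes S_P)}=PS_P$. The key input is the standard length formula under flat base change: for any finite-length $R$-module $N$, filtering $N$ by copies of $R/\fm$ and tensoring with the flat $R$-module $S_P$ gives a filtration of $N\otimes_R S_P$ with quotients $S_P/\fm S_P$, so
$$l_{S_P}(N\otimes_R S_P)\ =\ l_R(N)\cdot l_{S_P}(S_P/\fm S_P).$$
Applying this to $N=M/I^nM$ for each $n$ and comparing Hilbert--Samuel polynomials in degree $d=\dim_R M=\dim_{S_P}(M\otimes_R S_P)$ yields
$$e(IS_P,\,M\otimes_R S_P)=e(I,M)\cdot l_{S_P}(S_P/\fm S_P),\qquad l_{S_P}\bigl((M\otimes S_P)/IS_P(M\otimes S_P)\bigr)=l_R(M/IM)\cdot l_{S_P}(S_P/\fm S_P),$$
and the common factor $l_{S_P}(S_P/\fm S_P)$ cancels in the ratio, proving the identity and hence $n_R(M)\leq n_{S_P}(M\otimes_R S_P)$.

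The only point requiring a brief verification is the dimension equality $\dim_{S_P}(M\otimes_R S_P)=\dim_R M$ used to match the leading coefficients above. Setting $\overline R=R/\Ann(M)$ and $\overline{S_P}=S_P/\Ann(M)S_P$, the map $\overline R\to\overline{S_P}$ is still flat local with $0$-dimensional closed fiber $S_P/\fm S_P$, so $\dim\overline{S_P}=\dim\overline R$ by the standard dimension formula for flat local extensions; since $M\otimes_R S_P=M\otimes_{\overline R}\overline{S_P}$ is a faithful $\overline{S_P}$-module, its dimension equals $\dim\overline{S_P}=\dim\overline R=\dim_R M$. This is the only substantive obstacle; everything else is bookkeeping with the two flat base change formulas.
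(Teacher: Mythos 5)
Your proposal is correct and follows essentially the same route as the paper: localize $S$ at a minimal prime $P$ of $\fm S$ and invoke Theorem \ref{Thm:localization}, then use the flat base change length formula $l_{S_P}(N\otimes_R S_P)=l_R(N)\,l_{S_P}(S_P/\fm S_P)$ (via a composition series) to see that each admissible ratio for $M$ over $R$ is preserved for $M\otimes_R S_P$, whence $n_R(M)\leq n_{S_P}(M\otimes_R S_P)\leq n_S(M\otimes_R S)$. Your extra verifications (that $P\in\Supp(M\otimes_R S)$, that $IS_P$ is admissible, and the dimension equality, which in fact already follows from the proportionality of Hilbert functions) only add detail the paper leaves implicit.
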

\begin{proof}
Let $P$ be a minimal prime of $\mf mS$.  By Theorem \ref{Thm:localization}, we have
\[n_{S_P}((M \otimes_R S)_P) \leq n_S(M \otimes_R S).\]
Thus, replacing $S$ by $S_P$, we may assume that $S$ is local and that $\mf mS$ is $\mf n$-primary.
For any $\mf m$-primary ideal $I$, its extension $IS$ is an $\mf n$-primary ideal, and
tensoring the composition series with $S$ shows that
\[
\length_R (M/IM)\length_S (S/\mf mS) = \length_S \left((M\otimes_R S)/I(M\otimes_R S)\right)
\]
for any finitely generated $R$-module $M$.
Thus, $e (I, M)\length_S (S/\mf mS) = e(IS, M\otimes_R S)$ and
\begin{align*}
n(M) &= \sup_{\sqrt{I+\Ann(M)}=\fm} \left\{\frac{\length_R (M/IM)}{e(I, M)}\right\} \\
&=
\sup_{\sqrt{I+\Ann(M)}=\fm} \left\{ \frac{\length_S \left ((M \otimes_R S)/I(M \otimes_R S)\right )}{e (IS, M\otimes_RS)} \right\} \\
   &\leq  \sup_{\sqrt{IS+\Ann(M)S}=\fn} \left\{ \frac{\length_S \left ((M \otimes_R S)/I(M \otimes_R S)\right )}{e (IS, M\otimes_RS)} \right\}= n(M \otimes_R S). \qedhere
\end{align*}
\end{proof}


Given a flat local extension $(R,\fm)\to (S, \fn)$ and a finitely generated $R$-module $M$, it would also be interesting to bound $n(M\otimes_RS)$ in terms of $n(M)$ and $n(S/\fm S)$. We do not know how to obtain such a relation yet. Our last result in this subsection relates $n(M)$ and $n(M/xM)$ for a parameter $x$ on $M$ (i.e., $\dim (M/xM)=\dim(M)-1$ or, equivalently, $x$ is a parameter on $R/\Ann(M)$).

\begin{proposition}
Let $(R, \mf m)$ be a Noetherian local ring, and let $M$ be a finitely generated $R$-module of dimension $d$.
Then for any parameter $x$ of $M$, we have $n(M/xM) \leq n(M)$.
\end{proposition}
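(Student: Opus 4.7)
The plan is to test each candidate $J$ in the supremum defining $n(M/xM)$ against the definition of $n(M)$ via the natural $M$-side ideal $I := J + (x)$. Since $\Ann(M/xM)$ and $(x) + \Ann(M)$ have the same radical, the condition $\sqrt{J + \Ann(M/xM)} = \fm$ translates to $\sqrt{I + \Ann(M)} = \fm$, so $I$ is admissible in the definition of $n(M)$, and the canonical identification $M/IM \cong (M/xM)/J(M/xM)$ matches numerators exactly. The whole problem thus reduces to establishing the denominator comparison
\[
e(I, M) \leq e(J, M/xM),
\]
because this together with $l(M/IM)/e(I, M) \leq n(M)$ yields
\[
\frac{l((M/xM)/J(M/xM))}{e(J, M/xM)} \leq \frac{l(M/IM)}{e(I, M)} \leq n(M),
\]
and taking the supremum over $J$ finishes the proof.

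To obtain this denominator inequality I would first reduce, via a standard faithfully flat extension, to the case where the residue field of $R$ is infinite. Then $J$ admits a minimal reduction $K = (y_1, \ldots, y_{d-1}) \subseteq J$ with respect to $M/xM$, so $K$ is a parameter ideal on $M/xM$ satisfying $e(K, M/xM) = e(J, M/xM)$, and $(y_1, \ldots, y_{d-1}, x)$ is a system of parameters on $M$. Because multiplicity reverses inclusion, the containment $K + (x) \subseteq I$ forces $e(I, M) \leq e(K + (x), M)$, and it remains to prove the parameter-ideal inequality $e(K + (x), M) \leq e(K, M/xM)$.

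For that key step I would invoke the filtration
\[
M \supseteq xM + (y_1^{t_1}, \ldots, y_{d-1}^{t_{d-1}})M \supseteq x^2M + (y_1^{t_1}, \ldots, y_{d-1}^{t_{d-1}})M \supseteq \cdots \supseteq x^{t_d}M + (y_1^{t_1}, \ldots, y_{d-1}^{t_{d-1}})M,
\]
whose $t_d$ successive quotients are, via multiplication by $x$, surjective images of the single module $(M/xM)/(y_1^{t_1}, \ldots, y_{d-1}^{t_{d-1}})(M/xM)$ and hence all have length bounded by it. This yields
\[
l\bigl(M/(y_1^{t_1}, \ldots, y_{d-1}^{t_{d-1}}, x^{t_d})M\bigr) \leq t_d \cdot l\bigl((M/xM)/(y_1^{t_1}, \ldots, y_{d-1}^{t_{d-1}})(M/xM)\bigr),
\]
and upon dividing by $t_1 t_2 \cdots t_d$ and letting each $t_i \to \infty$, Lech's limit formula for the multiplicity of a parameter ideal delivers exactly $e(K + (x), M) \leq e(K, M/xM)$. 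Chaining everything produces $e(I, M) \leq e(K + (x), M) \leq e(K, M/xM) = e(J, M/xM)$, which closes the argument.

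The main obstacle is the residue-field reduction at the start: passing to infinite residue field while keeping $n(M)$ and $n(M/xM)$ compatible with the comparison. The other subtlety is checking that the minimal reduction $K$ of $J$ inside $M/xM$ really does combine with $x$ to furnish a useful ideal inside $I$ on the $M$-side — this is cleanly handled by the order-reversing property of multiplicity rather than by trying to show $K + (x)$ is itself a reduction of $I$, which would be strictly more delicate.
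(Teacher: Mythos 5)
Your proposal is correct and follows essentially the same route as the paper: it reduces everything to the single inequality $e(J+(x),M)\le e(J,M/xM)$ for the ideals obtained by adjoining $x$, which is precisely the standard fact ($e(I,M)\le e(I,M/xM)$ for $\fm$-primary $I\ni x$) that the paper invokes without proof. Your additional layer (passing to infinite residue field, taking a minimal reduction $K$ of $J$, and using Lech's limit formula) is a correct in-line proof of that fact, though it is optional: the same filtration argument applied directly to $I=J+(x)$, using $x^n\in I^n$ and comparing leading terms of the Hilbert functions, yields $e(I,M)\le e(I,M/xM)$ without reductions or residue-field enlargement.
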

\begin{proof}
Replacing $R$ by $R/\Ann(M)$, we may assume $M$ is a faithful $R$-module. Hence $x$ is a parameter on $R$ as well.
Since $e(I, M/xM)\geq e(I, M)$ for every $\fm$-primary ideal $I$ that contains $x$, we have
\[
\frac{\len((M/xM)/I(M/xM))}{e(I, M/xM)} =\frac{l(M/IM)}{e(I, M/xM)}
\le \frac{\len(M/IM)}{e(I, M)} \leq n(M).
\]
This clearly implies $n(M/xM) \leq n(M)$, as desired.
\end{proof}

\section{The lower bound: a generalization of Lech's inequality}

Our goal in this section is to generalize Lech's inequality to all finitely generated $R$-modules, thus proving the first part of Theorem A in the introduction. We first prove a key lemma.

\begin{lemma}
\label{lem:CDHZ}
Let $(R,\fm, k)$ be a complete local domain with an algebraically closed residue field. Let $M$ be a finitely generated $R$-module with $\dim(R) = \dim(M)$, and let $J$ be an integrally closed $\fm$-primary ideal. Then we have $$l(M/JM)\geq l(R/J)\dim_K(M\otimes_RK),$$ where $K$ denotes the fraction field of $R$.
\end{lemma}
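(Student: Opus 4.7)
The plan is to proceed by induction on the rank $r = \dim_K(M \otimes_R K)$. First, reduce to $M$ torsion-free: if $T \subseteq M$ is the torsion submodule, then $\dim T < d$ since $R$ is a domain, and from $0 \to T \to M \to M/T \to 0$ we obtain $l(M/JM) \geq l((M/T)/J(M/T))$ via the induced surjection on the right-exact functor $-\otimes_R R/J$. Since $M/T$ has the same rank $r$, we may replace $M$ by $M/T$ and assume $M$ is torsion-free.

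The inductive step rests on finding an element $e \in M$ such that the natural map $R/J \to M/JM$ sending $\bar 1 \mapsto \bar e$ is injective, i.e., $(JM : e) = J$. Given such $e$, the short exact sequence
\[
0 \to R/J \xrightarrow{\bar e} M/JM \to (M/Re)/J(M/Re) \to 0
\]
yields $l(M/JM) = l(R/J) + l((M/Re)/J(M/Re))$. For $r \geq 2$, $M/Re$ has rank $r-1 \geq 1$ and dimension $d$, so by induction (after reducing its torsion submodule as in the first step) $l((M/Re)/J(M/Re)) \geq (r-1)\,l(R/J)$, and hence $l(M/JM) \geq r\,l(R/J)$. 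The base case $r=1$ follows at once from the injection $R/J \hookrightarrow M/JM$.

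The existence of such an $e$ is the key step and uses both hypotheses. Integral closedness of $J$ yields $\Ann_R(M/JM) = J$ via the determinant trick: if $rM \subseteq JM$ for some $r \in R$, then $r$ satisfies an equation of integral dependence over $J$, so $r \in \overline J = J$. Consequently, for every non-zero $\bar r \in R/J$, the submodule $(0 :_{M/JM} \bar r) \subsetneq M/JM$ is a proper $k$-subspace of the finite-dimensional $k$-vector space $M/JM$. The algebraically closed residue field $k$ is then used to produce a generic $\bar e \in M/JM$ that avoids the union of these proper $k$-subspaces; any lift $e \in M$ then satisfies $(JM : e) = J$.

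The main obstacle is implementing this genericity step: although there are a priori infinitely many non-zero $\bar r \in R/J$ to avoid, one must argue that the union $\bigcup_{\bar r \neq 0}(0 :_{M/JM}\bar r)$ does not cover $M/JM$. This relies on both the algebraic closedness of $k$ (to invoke a Zariski-density argument on the affine $k$-variety underlying $M/JM$) and the Noetherian structure of $R/J$ (to cut down the a priori infinite family of proper submodules to a manageable one, for instance via associated-prime or filtration considerations). Delivering this avoidance argument rigorously for arbitrary integrally closed $\fm$-primary $J$ is the crux of the proof.
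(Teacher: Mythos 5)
Your reduction to the torsion-free case is fine, and the inductive framework would work \emph{if} you could produce, for every torsion-free $M$ in the induction, an element $e\in M$ with $(JM:_R e)=J$, i.e.\ an embedding $R/J\hookrightarrow M/JM$. But that existence claim is exactly where the proposal breaks down, and you have not proved it; you have only shown (via the determinant trick, using that a nonzero torsion-free module over a domain is faithful and $\overline J=J$) that each colon submodule $(0:_{M/JM}\bar r)$ is \emph{proper}. Properness of each member of an infinite family of submodules does not let you avoid their union: a faithful finite-length module over an Artinian local ring need not contain any element with zero annihilator. For instance, over $A=k[x,y]/(x,y)^2$ the injective hull $E=\Hom_k(A,k)$ of $k$ is faithful, yet every $f\in E$ vanishes on some nonzero $a\in\fm_A$ and hence has $a\in\Ann_A(f)$; here the union $\bigcup_{\bar r\neq 0}(0:_E\bar r)$ is all of $E$ even though $k$ may be algebraically closed. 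So no Zariski-density or prime-avoidance argument can succeed from faithfulness alone; any proof of the existence of $e$ would have to use the torsion-freeness of $M$ over the domain $R$ and the integral closedness of $J$ in an essential, structural way, and even the rank-one base case (an embedding $R/J\hookrightarrow I/JI$ for an ideal $I\cong M$) is already a nontrivial assertion of the same depth as the lemma itself. (A further minor point: in mixed characteristic $M/JM$ is not naturally a $k$-vector space, so the ``proper $k$-subspace'' framing is not even available.)

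By contrast, the paper's proof routes the real content through two external inputs that your sketch never engages: first, \cite[Corollary 2.2]{CDHZ12}, which gives $l(M/JM)\geq \bar l(R/J)\cdot\rank M$ where $\bar l(R/J)$ is the length of the longest chain of integrally closed ideals between $J$ and $R$; and second, an argument that $\bar l(R/J)=l(R/J)$, obtained by passing to the normalization $S$ of $R$ (local with residue field $k$ because $R$ is complete and $k$ is algebraically closed) and using Watanabe's theorem on chains of integrally closed ideals in $S$ with successive quotients of length one, then contracting back to $R$. Your proposal, as it stands, replaces these ingredients with an avoidance step that is both unproved and, in the generality in which you invoke it, false; this is a genuine gap rather than an alternative proof.
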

\begin{proof}
First of all, if we let $T(M)$ denote the torsion submodule of $M$, then we have $$0\to T(M)\to M\to M'\to 0$$ where $M'$ is torsion-free. Since $l(M/JM)\geq l(M'/JM')$ while $\dim_K(M\otimes_RK)=\dim_K(M'\otimes K)$, if the lemma holds for $M'$ then it also holds for $M$. Thus, in the rest of the proof we assume $M$ is torsion-free. In this case $\dim_K(M\otimes K)=\rank M$.

By \cite[Corollary 2.2]{CDHZ12}, we have
$$l(M/JM)\geq \bar{l}(R/J)\cdot\rank M,$$
where $\bar{l}(R/J)$ denotes the length of the longest chain of integrally closed ideals between $J$ and $R$. Therefore, it is enough to show $\bar{l}(R/J)=l(R/J)$. To prove this it is enough to find an integrally closed ideal $J'\supseteq J$ in $R$ such that $l(J'/J)=1$ because then $\bar{l}(R/J)=l(R/J)$ follows from an easy induction. Let $R\to S$ be the normalization of $R$. Since $R$ is a complete local domain, $S$ is local by \cite[Proposition 4.8.2]{HS06}, and so $S=(S,\fn)$ is a normal local domain with $R/\fm=S/\fn=k$ since $k$ is algebraically closed. Now by \cite[Theorem 2.1]{Wat03}, there exists a chain $$\overline{JS}=J_0\subseteq J_1\subseteq J_2\subseteq\cdots \subseteq J_n=\fn$$ such that
\begin{enumerate}
  \item Each $J_i$ is integrally closed in $S$;
  \item $l(J_{i+1}/J_i)=1$ for every $i$.
\end{enumerate}
Since $J$ is integrally closed in $R$ and $S$ is integral over $R$, by \cite[Proposition 1.6.1]{HS06} we know
$$J_0\cap R=\overline{{JS}}\cap R=\bar{J}=J.$$
Let $t=\max\{i \mid J_i\cap R=J\}$.  Obviously $0\leq t<n$. Set $J'=J_{t+1}\cap R$.  It is easy to see that $J'\supseteq J$ is integrally closed in $R$ (one can use \cite[Proposition 1.6.1]{HS06} again). Moreover, $l(J'/J)>0$ by our choice of $t$ while $J'/J\hookrightarrow J_{t+1}/J_t$ shows that $l(J'/J)\leq l(J_{t+1}/J_t)=1$. Thus, we have $l(J'/J)=1$.
\end{proof}

We define $\Assh(M) = \{P \in \Ass(M) \mid \dim(R/P) = \dim(M)\}$ for a finitely generated $R$-module $M$. We are now ready to state and prove the following generalization of Lech's inequality.
\begin{theorem}
\label{Thm:LechMod}
Let $(R,\fm, k)$ be a Noetherian local ring, and let $M$ be a finitely generated $R$-module of dimension $d$. Then for every ideal $I$ of $R$ whose image in $\overline{R}=R/\Ann(M)$ is $\fm$-primary, we have $$e(I, M)\leq d!e(\overline{R})l(M/IM).$$
\end{theorem}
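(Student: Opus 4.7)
The plan is to combine the associativity formula for multiplicities with classical Lech's inequality (applied to the quotient domains $R/P$) and Lemma~\ref{lem:CDHZ}. First, I would perform standard reductions, all of which preserve every quantity in the statement: replace $R$ by $\overline{R}=R/\Ann(M)$ so that $M$ becomes a faithful module with $\dim R = \dim M = d$ and $I$ is $\fm$-primary; pass to the completion; and then perform a faithfully flat base change to an extension $(R', \fm', \bar{k})$ with $\fm R' = \fm'$ and algebraically closed residue field $\bar{k}$. At this point $R$ is a complete local ring with algebraically closed residue field, $M$ is a faithful finitely generated $R$-module of dimension $d$, and $\Ann(M)=0$.

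The heart of the argument is the associativity formula
$$e(I, M) = \sum_{P} l_{R_P}(M_P)\, e(I(R/P), R/P),$$
where the sum ranges over primes $P$ with $\dim R/P = d$ (necessarily minimal primes of $R$). For each such $P$, the ring $R/P$ is a complete local domain of dimension $d$ with algebraically closed residue field, and the faithfulness of $M$ forces $M_P \neq 0$, so $M/PM$ is an $R/P$-module of dimension $d$ as well. Apply Lemma~\ref{lem:CDHZ} to $R/P$, $M/PM$, and the integrally closed $\fm/P$-primary ideal $J = \overline{I(R/P)}$ to obtain
$$l_{R/P}\bigl((M/PM)/\overline{I(R/P)}(M/PM)\bigr) \geq l_{R/P}\bigl((R/P)/\overline{I(R/P)}\bigr) \cdot \mu_{R_P}(M_P),$$
since the generic rank $\dim_{\kappa(P)}\bigl((M/PM)\otimes \kappa(P)\bigr)$ coincides with the minimal number of generators $\mu_{R_P}(M_P)$ of $M_P$. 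The surjections $M/IM \twoheadrightarrow (M/PM)/I(M/PM) \twoheadrightarrow (M/PM)/\overline{I(R/P)}(M/PM)$ bound the left-hand side above by $l(M/IM)$, while classical Lech's inequality applied to the complete local domain $R/P$ with the integrally closed ideal $\overline{I(R/P)}$ gives $e(I(R/P), R/P) \leq d!\, e(R/P) \cdot l_{R/P}\bigl((R/P)/\overline{I(R/P)}\bigr)$. Chaining the two estimates yields
$$e(I(R/P), R/P) \leq \frac{d!\, e(R/P)}{\mu_{R_P}(M_P)}\, l(M/IM).$$

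To finish, multiply this by $l_{R_P}(M_P)$, invoke the elementary Artinian-local bound $l_{R_P}(M_P) \leq \mu_{R_P}(M_P)\, l_{R_P}(R_P)$ (valid because $M_P$ is a quotient of $R_P^{\mu_{R_P}(M_P)}$), and sum over the relevant primes. Recognizing the second associativity formula $e(R) = \sum_{P} l_{R_P}(R_P)\, e(R/P)$, one concludes
$$e(I, M) \leq d!\, l(M/IM) \sum_{P} l_{R_P}(R_P)\, e(R/P) = d!\, e(R)\, l(M/IM),$$
which after undoing the reductions is the desired inequality. The main conceptual obstacle is reconciling the two different ``sizes'' of $M_P$ that arise: Lemma~\ref{lem:CDHZ} naturally delivers the minimal number of generators $\mu_{R_P}(M_P)$, while associativity of multiplicities involves the length $l_{R_P}(M_P)$. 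The Artinian-local bound $l_{R_P}(M_P) \leq \mu_{R_P}(M_P)\, l_{R_P}(R_P)$ is exactly what bridges this gap, transferring the module-theoretic inequality on each component $R/P$ into the desired ring-level bound for $R$.
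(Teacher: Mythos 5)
Your proposal is correct and follows essentially the same route as the paper: reduce to a complete local ring with algebraically closed residue field, apply the associativity formula, use classical Lech on each $R/P$ with the integrally closed ideal $\overline{IR/P}$, invoke Lemma~\ref{lem:CDHZ} for $M/PM$ over $R/P$ (identifying the generic rank with $l_{R_P}(M_P/PM_P)=\mu_{R_P}(M_P)$), bridge via $l_{R_P}(M_P)\le \mu_{R_P}(M_P)\,l_{R_P}(R_P)$, and finish with $e(R)=\sum_P l_{R_P}(R_P)e(R/P)$. The only cosmetic point is the order of reductions: one should perform the residue-field extension (gonflement) and then complete, so that the ring on which Lemma~\ref{lem:CDHZ} is applied is genuinely complete.
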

\begin{proof}
Replacing $R$ by $\overline{R}$ does not change either side of the inequality. Therefore, we may assume $\Ann(M)=0$ and so that $\dim(R)=\dim(M)=d$. We next take a flat local extension $(R,\fm,k)\to (R', \fm',k')$ such that $\fm'=\fm R'$ and $k'=R'/\fm'$ is the algebraic closure of $R/\fm=k$.  (Such an $R'$ always exists: it is a suitable gonflement of $R$; see \cite[Corollaire in Appendice 2]{Bou06}). Then $R\to R'\to \widehat{R'}$ is a faithfully flat extension with $\fm\widehat{R'}=\fm_{\widehat{R'}}$, so passing from $R$ to $\widehat{R'}$ and replacing $M$ by $M\otimes_R \widehat{R'}$ do not affect either side of the inequality. Therefore, without loss of generality, we may assume $(R,\fm, k)$ is a complete local ring with $k=\bar{k}$ and $\Assh(R)=\Assh(M)$.

By the associativity formula of multiplicity, we have
$$e(I, M)=\sum_{P\in \Assh(M)}l_{R_P}(M_P)e(I, R/P)=\sum_{P\in \Assh(R)} l_{R_P}(M_P)e(\overline{IR/P}, R/P).$$
Using Lech's inequality \cite[Theorem 3]{Lech60} for each $R/P$, we have
\begin{equation}
\label{eqn:Associativity}
e(I, M)\leq \sum_{P\in \Assh(R)} d!e(R/P)l\left((R/P)/(\overline{IR/P})\right)l_{R_P}(M_P).
\end{equation}
\begin{claim}
For every minimal prime $P$ of $R$, we have
\begin{equation}\label{eqn:Claim}
l\left((R/P)/(\overline{IR/P})\right)\cdot l_{R_P}(M_P)\leq l(M/IM)\cdot l_{R_P}(R_P).
\end{equation}
\end{claim}
\begin{proof}[Proof of Claim]
Clearly we have $l_{R_P}(M_P)\leq l_{R_P}(R_P)\cdot l_{R_P}(M_P/PM_P)$ because $l_{R_P}(M_P/PM_P)$ is the minimal number of generators of $M_P$ as an $R_P$-module. Therefore,
$$l\left((R/P)/(\overline{IR/P})\right)\cdot l_{R_P}(M_P)\leq l\left((R/P)/(\overline{IR/P})\right)\cdot l_{R_P}(M_P/PM_P)\cdot l_{R_P}(R_P).$$
Now $R/P$ is a complete local domain with algebraically closed residue field $k=\bar{k}$, and $M/PM$ is a finitely generated $R/P$-module. Applying Lemma \ref{lem:CDHZ} and noting that $\dim_{\kappa(P)}(M/PM)\otimes \kappa(P)=l_{R_P}(M_P/PM_P)$, we have
$$l\left((R/P)/(\overline{IR/P})\right)\cdot l_{R_P}(M_P/PM_P) \leq l\left( \frac{M/PM}{(\overline{IR/P})(M/PM)}\right)\leq l\left(\frac{M}{(I+P)M}\right)\leq l(M/IM).$$ Putting the two inequalities above together, we get $$l\left((R/P)/(\overline{IR/P})\right)\cdot l_{R_P}(M_P)\leq l(M/IM)\cdot l_{R_P}(R_P).$$ This finishes the proof of the Claim.
\end{proof}
Finally, we plug in (\ref{eqn:Claim}) to (\ref{eqn:Associativity}) and use additivity of multiplicity to get
\begin{align*}
  e(I, M) &\leq  \sum_{P\in \Assh(R)} d! e(R/P) l_{R_P}(R_P)l(M/IM) \\
   &= d! l(M/IM) \left(\sum_{P\in \Assh(R)} l_{R_P}(R_P) e(R/P)\right)
   = d! e(R) l(M/IM).
\end{align*}
This finishes the proof.
\end{proof}

\section{Applications and an alternative approach}

In this section we give some applications of our results in Section 2 and Section 3. In the process, we obtain another way to prove Conjecture \ref{conj:Vogel} without using Vasconcelos's extended degree (see Remark~\ref{SVbyInduction}).

\begin{lemma}
\label{lem:Length}
If $(R, \fm)$ is a Noetherian local ring and $M$ is a finitely generated quasi-unmixed $R$-module of dimension $d$, then there exists a constant $C$ such that
for every $h \in \mathbb R$
\[
\frac hC \le \inf_{\substack{\sqrt{I}= \sqrt{J} =\fm \\ e(I, M) \ge h e(J,M)}}
\left\{\frac{l(M/IM)}{l(M/JM)} \right\}
\quad \text{and} \quad
\sup_{\substack{\sqrt{I}= \sqrt{J} =\fm \\ e(I, M) \le h e(J,M)}}
\left\{\frac{l(M/IM)}{l(M/JM)} \right\} \le hC.
\]
In particular, there exists a constant $C$ such that such that for all $\frak m$-primary ideals $I$ and for all $n \ge 1$ we have
$$\frac{n^d}{C} \le \frac{l(M/I^nM)}{l(M/IM)} \le Cn^d.$$
\end{lemma}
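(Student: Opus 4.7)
The plan is to derive the lemma directly from Theorem A. Since $M$ is quasi-unmixed of dimension $d$, Theorem A provides a strictly positive lower bound
$$m(M) \ge \frac{1}{d!\, e(\overline{R})} > 0$$
and a finite upper bound $n(M) < \infty$ for the ratios $l(M/IM)/e(I,M)$ taken over ideals $I$ with $\sqrt{I + \Ann(M)} = \fm$. In particular, for every such ideal $I$,
$$m(M) \cdot e(I,M) \;\le\; l(M/IM) \;\le\; n(M) \cdot e(I,M).$$
Any $\fm$-primary ideal in the sense $\sqrt I = \fm$ automatically satisfies the weaker condition $\sqrt{I + \Ann(M)} = \fm$, so this sandwich applies to all ideals appearing in the statement.

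Set $C := n(M)/m(M)$, which is finite and depends only on $M$. For any two ideals $I,J$ with $\sqrt I = \sqrt J = \fm$, write
$$\frac{l(M/IM)}{l(M/JM)} \;=\; \frac{l(M/IM)/e(I,M)}{l(M/JM)/e(J,M)}\cdot\frac{e(I,M)}{e(J,M)}.$$
The first factor lies in the interval $[m(M)/n(M),\; n(M)/m(M)] = [1/C,\; C]$. Therefore, if $e(I,M)\ge h\, e(J,M)$, we obtain $l(M/IM)/l(M/JM)\ge h/C$, giving the first inequality; and if $e(I,M)\le h\, e(J,M)$, we obtain $l(M/IM)/l(M/JM)\le hC$, giving the second.

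For the ``in particular'' statement, specialize to $J=I$ and replace $I$ by $I^n$. Since $e(I^n,M) = n^d e(I,M)$, applying both bounds with $h=n^d$ yields
$$\frac{n^d}{C} \;\le\; \frac{l(M/I^nM)}{l(M/IM)} \;\le\; C n^d,$$
as desired. There is no genuine obstacle in the proof: the whole content of the lemma is a convenient repackaging of Theorem A, and the statement is designed for use in the Koszul homology arguments that follow in this section.
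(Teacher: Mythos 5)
Your proposal is correct and is essentially the paper's own argument: both deduce the sandwich $l(M/IM) \asymp e(I,M)$ from Theorem \ref{Thm:LechMod} and Theorem \ref{Thm:Vogel} and then compare the two ideals via the multiplicity hypothesis, taking $h=n^d$ with $e(I^n,M)=n^de(I,M)$ for the last claim. The only cosmetic difference is your constant $C=n(M)/m(M)$ versus the paper's $C=n(M)\,d!\,e(\overline{R})$, which comes from the same lower bound.
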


\begin{proof}
Let $\dim(M) = \dim (\overline{R})=d$ where $\overline{R}=R/\Ann(M)$. We use Theorem \ref{Thm:Vogel} and Theorem \ref{Thm:LechMod} to see
\[
l(M/IM) \geq  e(I,M)/(d! e(\overline{R})) \ge h  e(J,M)/(d! e(\overline{R}))
\geq h l(M/JM) /(n(M)  d! e(\overline{R})),
\]
which proves the first inequality. By symmetry, we have
\[
l(M/IM) \leq n(M) e(I,M) \le n(M)h  e(J,M) \leq h n(M)  d! e(\overline{R})  l(M/JM),
\]
which proves the second inequality.
So we can take $C=n(M)  d! e(\overline{R})$ in both cases. Finally, taking $h=n^d$ and noting that $e(I^n,M) = n^d e(I,M)$ immediately proves the last claim.
\end{proof}

This lemma has an immediate consequence, which we will need.

\begin{corollary}
\label{cor.integralclosure}
If $(R,\fm)$ is a Noetherian local ring and $M$ is a finitely generated quasi-unmixed $R$-module, then
$$\sup_{\substack{ \sqrt{I}=\fm \\ I\subseteq J\subseteq \overline{I}}}
\left\{\frac{l(M/IM)}{l(M/JM)} \right\}<\infty.$$
\end{corollary}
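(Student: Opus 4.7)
The plan is to reduce Corollary~\ref{cor.integralclosure} directly to Lemma~\ref{lem:Length}, via the classical fact that two $\fm$-primary ideals with the same integral closure have the same Hilbert--Samuel multiplicity on any finitely generated module.

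First, I would note that the hypotheses force both $I$ and $J$ to be $\fm$-primary: since $I\subseteq J$ and $\sqrt{I}=\fm$, we automatically have $\sqrt{J}=\fm$, so the pair $(I,J)$ lies in the index set used in Lemma~\ref{lem:Length}. Next, from $I\subseteq J\subseteq \overline{I}$ one obtains $\overline{J}=\overline{I}$, which is equivalent to saying that $I$ is a reduction of $J$. Hence there exists an integer $n$ with $J^{n+1}=IJ^{n}$, and by induction $J^{n+k}=I^{k}J^{n}$ for all $k\ge 0$. Applying this to $M$ yields $J^{n+k}M\subseteq I^{k}M$ for all $k\ge 0$, while the containment $I\subseteq J$ gives $I^{k}M\subseteq J^{k}M$. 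Comparing the leading coefficients of the Hilbert--Samuel polynomials of $l(M/I^{k}M)$ and $l(M/J^{k}M)$ then forces
\[
e(I,M)=e(J,M).
\]

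With this equality in hand, I would invoke Lemma~\ref{lem:Length} with $h=1$: the hypothesis $e(I,M)\le 1\cdot e(J,M)$ holds, and the lemma produces a constant $C=n(M)\cdot d!\cdot e(\overline{R})$, depending only on $M$, such that
\[
\frac{l(M/IM)}{l(M/JM)} \le C
\]
for every such pair. Since $C$ is independent of the particular $(I,J)$, taking the supremum over all admissible pairs gives the desired finite bound.

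The step with any actual content is the equality $e(I,M)=e(J,M)$; it is standard, but worth recording in the module setting. Beyond that, Corollary~\ref{cor.integralclosure} is essentially a direct specialization of Lemma~\ref{lem:Length}, so no further obstacle is anticipated.
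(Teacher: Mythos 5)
Your proposal is correct and is essentially the paper's own proof: the authors likewise observe that $I\subseteq J\subseteq\overline{I}$ forces $e(I,M)=e(J,M)$ and then apply Lemma~\ref{lem:Length} with $h=1$. The only difference is that you spell out the standard reduction argument behind the multiplicity equality, which the paper leaves implicit.
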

\begin{proof}
The condition $I\subseteq J\subseteq \overline{I}$ implies $e(I, M)=e(J, M)$. Therefore we apply Lemma \ref{lem:Length} with $h=1$ to get the desired claim.
\end{proof}

Next we prove a result that extends Lemma \ref{lem:CDHZ}
at the cost of precision in the inequality.

\begin{lemma}\label{lem:Length2}
Let $(R, \fm)$ be a Noetherian local ring and $N$ be a
finitely generated quasi-unmixed $R$-module.
Then there exists a constant $C_N>0$ depending only on $N$ such that
\[
1/C_N \le \inf_{\sqrt{I} =\fm}\left\{\frac{l(M/IM)}{l(N/IN)} \right\}
\quad \text{and, equivalently,} \quad
\sup_{\sqrt{I} =\fm}\left\{\frac{l(N/IN)}{l(M/IM)} \right\} \le C_N
\]
for all finitely generated $R$-modules $M$ with $\Supp(M) \supseteq \Supp(N)$.
\end{lemma}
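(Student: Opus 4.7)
My plan is to combine the two main theorems of the previous sections — the finiteness $n(N) < \infty$ from Theorem~\ref{Thm:Vogel} and the generalized Lech inequality from Theorem~\ref{Thm:LechMod} — with the associativity formula for multiplicity, so as to reduce the problem to a termwise comparison indexed by $\Assh(N)$. First I would bound $l(N/IN) \le n(N) \cdot e(I,N)$ uniformly in $I$ using Theorem~\ref{Thm:Vogel}, and then decompose
\[
e(I,N) = \sum_{P \in \Assh(N)} l_{R_P}(N_P) \cdot e(I,R/P)
\]
by associativity. This reduces the lemma to proving, for each $P \in \Assh(N)$, a uniform bound of the shape $e(I,R/P) \le d! \cdot e(R/P) \cdot l(M/IM)$ with no dependence on $M$.

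The heart of the argument — the only place where the support hypothesis enters — is the comparison $e(I,R/P) \le e(I,M/PM)$. Since $P \in \Assh(N) \subseteq \Supp(N) \subseteq \Supp(M)$ and $M_P$ is finitely generated, Nakayama's lemma forces $M/PM \otimes_{R/P} \kappa(P) \ne 0$. From this I would deduce $\Ann_{R/P}(M/PM) = 0$ (any nonzero $\bar r \in R/P$ is a unit in $\kappa(P) = (R/P)_P$, so $\bar r \cdot (M/PM) = 0$ would kill the generic stalk), hence $\dim_R(M/PM) = d$ and $\Ann_R(M/PM) = P$. Since $R/P$ is a $d$-dimensional domain, the associativity formula applied to the $R/P$-module $M/PM$ collapses to a single term:
\[
e(I,M/PM) = \dim_{\kappa(P)}(M_P/PM_P) \cdot e(I,R/P) \ge e(I,R/P).
\]

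To finish, I would apply Theorem~\ref{Thm:LechMod} directly to $M/PM$ (whose $R$-annihilator is exactly $P$) to obtain
\[
e(I,M/PM) \le d! \cdot e(R/P) \cdot l(M/(I+P)M) \le d! \cdot e(R/P) \cdot l(M/IM),
\]
which together with the previous step yields $e(I,R/P) \le d! \cdot e(R/P) \cdot l(M/IM)$. Substituting into the associativity decomposition of $e(I,N)$ and recognizing the weighted sum $\sum_P l_{R_P}(N_P) \cdot e(R/P)$ as $e(N)$ (again by associativity), I conclude
\[
l(N/IN) \le n(N) \cdot d! \cdot e(N) \cdot l(M/IM),
\]
so $C_N := n(N) \cdot d! \cdot e(N)$ depends only on $N$ and does the job. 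The delicate point is the Nakayama step: without $\Supp(M) \supseteq \Supp(N)$ one loses both $\dim_{\kappa(P)}(M_P/PM_P) \ge 1$ and the vanishing of $\Ann_{R/P}(M/PM)$, and the entire comparison collapses.
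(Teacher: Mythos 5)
Your proof is correct and follows essentially the same route as the paper: bound $l(N/IN)\le n(N)e(I,N)$ via Theorem~\ref{Thm:Vogel}, use the associativity formula together with the support hypothesis (Nakayama at the minimal primes of $N$) to dominate $e(I,N)$ by the multiplicity of a quotient of $M$ supported on $\Supp(N)$, and then apply Theorem~\ref{Thm:LechMod} to that quotient. The only difference is cosmetic: you argue prime-by-prime with $M/PM$ for $P\in\Assh(N)$ and obtain the constant $n(N)\,d!\,e(N)$, whereas the paper does it in one step with $\overline M=M/\fa M$, $\fa=\bigcap_{P\in\Min(N)}P$, obtaining $n(N)\,c\,d!\,e(R/\fa)$.
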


\begin{proof}
Since $N$ is quasi-unmixed, it is equidimensional. We set $c = \max_{P \in \Min(N)} l_{R_P}(N_P)$.
Let $M$ be any finitely generated $R$-module $M$ with $\Supp(M) \supseteq
\Supp(N)$. Denote $\overline M = M/\fa M$, where $\fa = \cap_{P \in
  \Min(N)}P$.
Then $\Supp(\overline M) = \Supp(N)$, and,
by the associativity formula for multiplicities,
\begin{align*}
ce(I,\overline M) =\sum_{P \in \Min(N)} c \cdot l_{R_P}(\overline M_P) e(I,R/P)
\ge \sum_{P \in \Min(N)} l_{R_P}(N_P) e(I,R/P)
= e(I,N)
\end{align*}
for all $\fm$-primary ideals $I$.
Now we use Theorem~\ref{Thm:Vogel} and Theorem~\ref{Thm:LechMod} to
obtain (with $d = \dim(N)$)
\[
l(N/IN) \le n(N) e(I,N) \le n(N)c e(I, \overline M)
\le n(N)c d! e(\overline{R}) l(\overline M/I\overline M)
\le n(N)c d! e(\overline{R}) l(M/IM)
\]
where $\overline{R} = R/\fa$ depends only on $N$.
So we can take $C_N={n(N)cd!e(\overline R)}$.
\end{proof}

Lemma~\ref{lem:Length2} allows us to establish the following general
result:

\begin{theorem}\label{thm:Length2}
Let $(R, \fm)$ be a Noetherian local ring,  and let $M$ and $N$ denote
finitely generated $R$-modules. Then
\begin{enumerate}
\item $0 < \inf_{\sqrt{I} =\fm}\left\{\frac{l(M/IM)}{l(N/IN)} \right\}
\iff \sup_{\sqrt{I} =\fm}\left\{\frac{l(N/IN)}{l(M/IM)} \right\} < \infty
\iff \Supp(M) \supseteq \Supp(N)$.
\item There exists a constant $C>0$ depending only on $N$ such that
\[
1/C \le \inf_{\sqrt{I} =\fm}\left\{\frac{l(M/IM)}{l(N/IN)} \right\}
\quad \text{and, equivalently,} \quad
\sup_{\sqrt{I} =\fm}\left\{\frac{l(N/IN)}{l(M/IM)} \right\} \le C
\]
for all (finitely generated $R$-modules) $M$ with $\Supp(M) \supseteq \Supp(N)$.
\item $0 < \inf_{\sqrt{I} =\fm}\left\{\frac{l(M/IM)}{l(N/IN)} \right\}
\le \sup_{\sqrt{I} =\fm}\left\{ \frac{l(M/IM)}{l(N/IN)} \right\} < \infty
\iff \Supp(M) = \Supp(N)$.
\end{enumerate}
\end{theorem}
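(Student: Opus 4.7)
The three parts are tightly interconnected. The two equivalences in (1) are reciprocal to one another, so it suffices to prove the single equivalence
\[
\sup_{\sqrt{I} =\fm}\left\{\frac{l(N/IN)}{l(M/IM)} \right\} < \infty \iff \Supp(M) \supseteq \Supp(N).
\]
Part (2) is a quantitative refinement of the $(\Leftarrow)$ direction asserting that the constant depends only on $N$, and part (3) follows at once from (1) applied twice, since finiteness of the supremum and positivity of the infimum of $l(M/IM)/l(N/IN)$ correspond, respectively, to $\Supp(N) \supseteq \Supp(M)$ and $\Supp(M) \supseteq \Supp(N)$.

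For the $(\Leftarrow)$ implication and for (2) simultaneously, I first reduce to the case of $R$ complete, which is legitimate because $l_R(M/IM) = l_{\hat R}(\hat M/I\hat M)$ for $\fm$-primary $I$ and because $\Supp(M) \supseteq \Supp(N) \iff \Supp(\hat M) \supseteq \Supp(\hat N)$ (via $\Ann(\hat M) = \Ann(M)\hat R$ and faithful flatness). With $R$ complete, I take a prime filtration $0 = N_0 \subset N_1 \subset \cdots \subset N_r = N$ with $N_i/N_{i-1} \cong R/P_i$ for primes $P_i \in \Supp(N)$. Each $R/P_i$ is a complete local domain, hence equidimensional, hence quasi-unmixed, and $\Supp(M) \supseteq \Supp(N) \supseteq V(P_i) = \Supp(R/P_i)$. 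Lemma~\ref{lem:Length2} then supplies constants $C_i = C_{R/P_i}$ (depending only on $N$) with $l((R/P_i)/I(R/P_i)) \leq C_i\, l(M/IM)$. Iterating the right-exact functor $-\otimes R/I$ on $0 \to N_{i-1} \to N_i \to R/P_i \to 0$ gives $l(N/IN) \leq \sum_i l((R/P_i)/I(R/P_i)) \leq C\, l(M/IM)$ with $C = \sum_i C_i$ depending only on $N$, establishing (2) together with the $(\Leftarrow)$ direction.

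For the $(\Rightarrow)$ direction I argue by contraposition: assuming some $P \in \Supp(N) \setminus \Supp(M)$ exists, I use the explicit family $I_n = P + \fm^n$. Since $\fm \in \Supp(M)$ forces $P \subsetneq \fm$, one has $\sqrt{I_n} = \fm$ and $d' := \dim(R/P) \geq 1$. Computations are cleanest after completing, where $l(N/I_nN) = l((\hat N/P\hat N)/\hat\fm^n(\hat N/P\hat N))$. Because $\Ann(\hat N) \subseteq P\hat R$, we have $\Supp(\hat N/P\hat N) = V(P\hat R)$, whence $\dim(\hat N/P\hat N) = \dim(\hat R/P\hat R) = d'$ and the Hilbert--Samuel function of $\hat N/P\hat N$ with respect to $\hat\fm$ is a polynomial in $n$ of degree exactly $d'$ with positive leading coefficient. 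For the denominator, I claim $\dim(\hat M/P\hat M) < d'$: otherwise some minimal prime $\hat P$ of $P\hat R$ with $\dim(\hat R/\hat P) = d'$ would lie in $\Supp(\hat M)$, forcing $P = \hat P \cap R \in \Supp(M)$, a contradiction. Hence $l(M/I_nM) = O(n^{d'-1})$, so the ratio grows at least linearly in $n$ and $\sup = \infty$.

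The main technical obstacle is the dimension count in the contrapositive: since $P\hat R$ need not be prime in $\hat R$, one has to argue at the level of supports of modules rather than individual primes, and the asymptotic gap between $n^{d'}$ and $O(n^{d'-1})$ is precisely what drives divergence. Everything else reduces to routine bookkeeping with prime filtrations, Lemma~\ref{lem:Length2}, and the base-change behavior of supports and lengths.
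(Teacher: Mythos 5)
Your proposal is correct and follows essentially the same route as the paper: for the backward direction and (2) you complete, take a prime cyclic filtration of $N$, and apply Lemma~\ref{lem:Length2} to each factor $R/P_i$ (quasi-unmixed since $R$ is complete), summing the constants; for the forward direction you use the ideals $P+\fm^n$, which is exactly the paper's comparison of $l(\ol N/\fm^t\ol N)$ and $l(\ol M/\fm^t\ol M)$ for $\ol N = N/PN$, $\ol M = M/PM$, and (3) and the reciprocal equivalence in (1) are handled identically. The only (harmless) deviation is that you complete before running the contrapositive dimension count, which forces you to deal with $P\widehat R$ not being prime; the paper avoids this by arguing directly over $R$.
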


\begin{proof}
(1): Clearly, we only need to prove the second equivalence. For the
forward direction, assume
$\sup_{\sqrt{I} =\fm}\left\{\frac{l(N/IN)}{l(M/IM)} \right\} <
\infty$ and let $P \in \Supp(N)$. Denote $\ol M = M/PM$ and $\ol N =
N/PN$. As $\left\{\frac{l(N/IN)}{l(M/IM)} \mid \sqrt{I} =\fm\right \}
\supseteq \left\{\frac{l(\ol N/I\ol N)}{l(\ol M/I\ol M)}
\mid \sqrt{I} =\fm\right \}$, we get
$\sup_{\sqrt{I} =\fm}\left\{\frac{l(\ol N/I\ol N)}{l(\ol M/I\ol M)} \right\} <
\infty$, which implies $\dim(\ol N) \le \dim(\ol M)$ by considering
$\frac{l(\ol N/I\ol N)}{l(\ol M/I\ol M)}$ with $I = \fm^t$ for $t \gg 0$.
Note that $\dim(N/PN) = \dim(R/P)$, since $P \in \Supp(N)$. Thus,
$\dim(M/PM) = \dim(R/P)$, which forces $P \in \Supp(M)$.

For the backward direction, assume $\Supp(M) \supseteq \Supp(N)$. We can
further assume that $R$ is complete, which does not affect the
statement. We next take a prime cyclic filtration of $N$ of length $n$ with factors $N_i = R/P_i$ such that $P_i \in
\Supp(N)$ for $i = 1,\dotsc, n$ (note that the $P_i$ are not necessarily distinct). As $l(N/IN) \le
\sum_{i=1}^nl(N_i/IN_i)$ for every $\fm$-primary ideal $I$, it
suffices to show $\sup_{\sqrt{I} =\fm}\left\{\frac{l(N_i/IN_i)}{l(M/IM)} \right\} <
\infty$ for each $i = 1, \dotsc, n$. But this follows from
Lemma~\ref{lem:Length2} since each $N_i = R/P_i$ is quasi-unmixed (since
$R$ is complete) and $\Supp(M) \supseteq \Supp(N_i)$. In detail, let
$C_{N_i} > 0$ be as in Lemma~\ref{lem:Length2} for each $i = 1, \dotsc, n$. Then
\[
\sup_{\sqrt{I} =\fm}\left\{\frac{l(N/IN)}{l(M/IM)} \right\}
\le \sum_{i=1}^n\sup_{\sqrt{I} =\fm}\left\{\frac{l(N_i/IN_i)}{l(M/IM)}
\right\}
\le \sum_{i=1}^n C_{N_i} < \infty
\]
with $\sum_{i=1}^n C_{N_i} < \infty$ depending only on $N$.

(2): From the proof of (1) above (for the backward direction), we can set $C
= \sum_{i=1}^n C_{N_i}$, which depends only on (the completion of) $N$.

(3): This is clear from (1).
\end{proof}

\begin{lemma}
\label{ses}
Let $(R,\fm)$ be a Noetherian local ring and $M$ a finitely generated $R$-module.
If $(y_1, \ldots, y_d) \subseteq (x_1, \ldots, x_k)$ are $\fm$-primary
ideals of $R$, then for all $0 \leq i \leq k$,
\[
l(H_i(x_1, \ldots, x_k;M))
\leq \sum_{j=0}^k {k \choose j} l(H_{i-j}(y_1, \ldots, y_d;M))
\le 2^k \max_{0 \le j \le k}  l(H_{i-j}(y_1, \ldots, y_d;M)),
\]
with the convention that $H_{<0}(y_1, \ldots, y_d; M)=0$.
\end{lemma}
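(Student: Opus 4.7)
The idea is to insert the sequence $y_1,\ldots,y_d$ into the Koszul complex of $x_1,\ldots,x_k$ (and, symmetrically, insert the $x$'s into the Koszul complex of the $y$'s) one element at a time, using the standard short exact sequence of Koszul complexes
\[
0 \to K_\bullet(\underline{a}; M) \to K_\bullet(\underline{a}, b; M) \to K_\bullet(\underline{a}; M)[-1] \to 0
\]
whose associated long exact sequence in homology has connecting homomorphism $\pm b$. Taking lengths, this yields
\[
l(H_i(\underline{a}, b; M)) \leq l(H_i(\underline{a}; M)) + l(H_{i-1}(\underline{a}; M)),
\]
with equality whenever $b$ annihilates $H_\bullet(\underline{a}; M)$; in particular, whenever $b \in (\underline{a})$, since Koszul homology is annihilated by the defining ideal.

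First, I would append $y_1, \ldots, y_d$ to $x_1, \ldots, x_k$. Because $(y_1,\ldots,y_d) \subseteq (x_1,\ldots,x_k) \subseteq (x_1,\ldots,x_k,y_1,\ldots,y_{j-1})$ at every stage, each added $y_j$ annihilates the current Koszul homology, so the LES splits into short exact sequences. A straightforward induction on $j$ yields the exact formula
\[
l(H_i(x_1,\ldots,x_k,y_1,\ldots,y_d; M)) = \sum_{s=0}^d \binom{d}{s} l(H_{i-s}(x_1,\ldots,x_k; M)),
\]
whose $s = 0$ term alone gives $l(H_i(x_1,\ldots,x_k; M)) \leq l(H_i(x_1,\ldots,x_k,y_1,\ldots,y_d; M))$.

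Second, I would append $x_1, \ldots, x_k$ to $y_1, \ldots, y_d$. No containment is available now, so only the inequality form of the LES step is usable; iterated $k$ times, it gives
\[
l(H_i(y_1,\ldots,y_d,x_1,\ldots,x_k; M)) \leq \sum_{j=0}^k \binom{k}{j} l(H_{i-j}(y_1,\ldots,y_d; M)).
\]
Since Koszul homology is invariant under permutation of the sequence, the two displayed estimates chain together to give the first inequality of the lemma. The second inequality is then immediate from $\sum_{j=0}^k \binom{k}{j} = 2^k$ combined with bounding each summand by the maximum.

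The only subtlety is the asymmetry between the two inductions: the first step uses equality (requiring each new element to lie in the current ideal) while the second only gives inequality (no such containment is assumed). This is not a real obstacle, only a matter of careful bookkeeping, since the equality in step one is being used only to produce the single upward comparison $l(H_i(\underline{x}; M)) \leq l(H_i(\underline{x},\underline{y}; M))$.
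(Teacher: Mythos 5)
Your proposal is correct and follows essentially the same route as the paper: the same short exact sequence $0 \to H_i(\underline{a};M)/bH_i(\underline{a};M) \to H_i(\underline{a},b;M) \to \Ann_{H_{i-1}(\underline{a};M)}(b) \to 0$, first adjoining the $y$'s (where membership $y_j \in (\underline{x})$ gives the additive equality, hence monotonicity) and then comparing with the Koszul homology of the $y$'s alone via the iterated inequality, which produces the binomial coefficients. The only cosmetic difference is that you state an exact binomial formula in the first stage and phrase the second stage as appending the $x$'s rather than removing them, but the argument is the same.
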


\begin{proof}
If $\underline{f} = f_1, \ldots, f_s$ is any sequence of elements of
$R$ and $\underline{f}^- = f_1, \ldots, f_{s-1}$, then there is
a short exact sequence for each $0 \leq i \leq s-1$
\begin{equation*}
0 \rightarrow \dfrac{H_i(\underline{f}^-;M)}{f_s H_i(\underline{f}^- ;M)} \rightarrow H_i(\underline{f};M) \rightarrow \Ann_{H_{i-1}(\underline{f}^-;M)}(f_s) \rightarrow 0,
\end{equation*}
Using the short exact sequence above, we see that
\begin{align*}
l(H_i(x_1, \ldots, x_k;M))
&\leq l(H_i(x_1, \ldots, x_k;M)) + l(H_{i-1}(x_1, \ldots, x_k;M))\\
&= l(H_i(x_1, \ldots, x_k, y_1;M))
\quad \text{ (since $y_1 \in (x_1, \ldots, x_k)$)}\\
&\leq \dotsb \quad \text{ (by joining $y_2, \dotsc, y_d$ inductively)} \\
&\leq l(H_i(x_1, \ldots, x_k, y_1, \ldots, y_d;M))\\
&= l\left(\dfrac{H_i(x_1, \ldots, x_{k-1}, y_1, \ldots, y_d;M)}
{x_{k} H_i(x_1, \ldots, x_{k-1}, y_1, \ldots, y_d;M)}\right)
+ l\left(\Ann_{H_{i-1}(x_1, \ldots, x_{k-1}, y_1, \ldots, y_d;M)}(x_k)
\right) \\
&\leq l\left(H_i(x_1, \ldots, x_{k-1}, y_1, \ldots, y_d;M)\right)
+ l\left(H_{i-1}(x_1, \ldots, x_{k-1}, y_1, \ldots, y_d;M)\right) \\
&\leq \dotsb \quad \text{ (by removing $x_{k-1}, \dotsc, x_1$ inductively)} \\
&\leq \sum_{j=0}^k {k \choose j} l(H_{i-j}(y_1, \ldots, y_d;M))\\
&\leq 2^k \max_{0 \le j \le k} l(H_{i-j}(y_1, \ldots, y_d;M)),
\end{align*}
completing the proof.
\end{proof}

\begin{remark}[\cite{Schenzel98} or \cite{BHM18}]
\label{sseq}
Let $(R,\fm)$ be a local ring of dimension $d$ that is a homomorphic
image of a local Gorenstein ring $S$ of dimension $n$. Then for every
finitely generated $R$-module $M$, every system of parameters $\underline{x} = x_1, \dotsc, x_d$ of
$R$, and every $i = 1, \dotsc, d$, we have
\[
l(H_i(x_1, \ldots, x_d;M)) \leq
\sum_{j = 0}^{d-i} l(H_{d-i-j}(x_1, \ldots, x_d;\Ext_S^{n-j}(M,S))).\footnote{This is written down in \cite{BHM18}. We point out that this also follows from \cite[Theorem 3.16]{Schenzel98} as follows: since $\underline{x}$ is a system of parameters of $R$, we can pick $\underline{y}=y_1,\dots,y_d$ with $(\underline{y})=(\underline{x})$ such that they form a strong filter regular sequence on $R$ and $M$ by prime avoidance.  Replacing $\underline{x}$ by $\underline{y}$ does not affect the Koszul homology, so we can assume $\underline{x}$ is a strong filter regular sequence on $R$ and $M$ and then note that we have a canonical isomorphism $H^j(x_1,\dots,x_d; N^\vee)\cong H_j(x_1,\dots,x_d; N)^\vee$ for all finitely generated $R$-modules $N$ by \cite[bottom of page 286]{Schenzel98}. (In particular, they have the same length.).  Therefore, the displayed formula is a restatement of \cite[Theorem 3.16]{Schenzel98}.}
\]
Note that $\dim(\Ext_S^{n-j}(M,S)) \le d-i$ for each $j = 0, \dotsc, d-i$, since $\Ext_S^{n-j}(M,S)^\vee\cong H_{\fm}^j(M)$ where $(-)^\vee$ stands for Matlis dual.
\end{remark}

\begin{theorem} \label{Thm:HomoBound}
Let $(R, \fm)$ be a Noetherian local ring and $M$ a finitely generated $R$-module of dimension $d$.  Then there exists a constant $C$ depending on $M$ such
that, for every $k \geq d$, we have
\[
\sup_{\substack{\sqrt{(x_1,\ldots, x_{k}) + \Ann(M)}=\fm\\ 0 \leq i \leq k}}
\left\{\frac{l(H_i(x_1, \ldots, x_{k};M))}
{l(M/(x_1,\ldots,x_{k})M)} \right\} \le 2^kC.
\]
\end{theorem}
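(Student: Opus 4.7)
The plan is a strong induction on $d = \dim M$, in which each inductive step first establishes the quasi-unmixed case (by reducing to a minimal reduction) and then extracts the general case from a prime cyclic filtration. After replacing $R$ by its completion and performing a standard gonflement to secure an infinite residue field---both operations preserve every length and Koszul homology in sight---we may assume $R$ is complete with infinite residue field, so that both $R$ and $\ol{R} := R/\Ann(M)$ are homomorphic images of Gorenstein local rings as required by Remark~\ref{sseq}, and so that every $\fm$-primary ideal admits a minimal reduction generated by a system of parameters of $M$. The base case $d = 0$ is immediate: the hypothesis forces $\ux \subseteq \fm$, whence $l(M/\ux M) \geq 1$ by Nakayama, while $l(H_i(\ux; M)) \leq \binom{k}{i}l(M) \leq 2^k l(M)$, so $C = l(M)$ suffices.

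Suppose inductively that the theorem holds in dimensions strictly smaller than $d$, and assume first that $M$ is quasi-unmixed with $\dim M = d \geq 1$. Choose $\uy = y_1, \dots, y_d \in (\ux)$ whose images in $\ol{R}$ generate a minimal reduction of $(\ux)\ol{R}$; then $\uy$ is a system of parameters of $M$ and $(\ux) + \Ann(M) \subseteq \ol{(\uy) + \Ann(M)}$. Lemma~\ref{ses}, applied to the (modulo $\Ann(M)$) $\fm$-primary ideals $(\uy) \subseteq (\ux)$, gives
\[
l(H_i(\ux; M)) \leq 2^k \max_{0 \leq j \leq k} l(H_{i-j}(\uy; M)),
\]
and Corollary~\ref{cor.integralclosure} applied to the chain $(\uy) + \Ann(M) \subseteq (\ux) + \Ann(M) \subseteq \ol{(\uy) + \Ann(M)}$ produces a constant $C'$, depending only on $M$, with $l(M/\uy M) \leq C' l(M/\ux M)$. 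It remains to bound $l(H_{i'}(\uy; M))$ by a constant multiple of $l(M/\uy M)$ for each $1 \leq i' \leq d$. Remark~\ref{sseq}, applied to the system of parameters $\uy$ of $\ol{R}$ and a Gorenstein cover $S' \twoheadrightarrow \ol{R}$ of dimension $n'$, gives
\[
l(H_{i'}(\uy; M)) \leq \sum_{l = 0}^{d-i'} l(H_{d-i'-l}(\uy; N_l)), \qquad N_l := \Ext_{S'}^{n'-l}(M, S'),
\]
with $\dim N_l \leq d - i' < d$ and $\Ann(M) \subseteq \Ann(N_l)$. The inductive hypothesis (valid since $|\uy| = d \geq \dim N_l$ and $(\uy) + \Ann(N_l)$ is $\fm$-primary) yields $l(H_m(\uy; N_l)) \leq 2^d C_{N_l}\, l(N_l/\uy N_l)$, and Theorem~\ref{thm:Length2}(2) (applicable because $\Supp(N_l) \subseteq \Supp(M)$, after rewriting $(\uy)$ as the $\fm$-primary $(\uy) + \Ann(M)$) gives $l(N_l/\uy N_l) \leq C'_{N_l}\, l(M/\uy M)$. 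Summing over $l$ produces a constant $C_1$ depending only on $M$ with $l(H_{i'}(\uy; M)) \leq C_1\, l(M/\uy M)$, and composing all inequalities establishes the theorem, with $C = C'C_1$, in the quasi-unmixed case.

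For the general case, take a prime cyclic filtration $0 = M_0 \subset \cdots \subset M_n = M$ with $M_j/M_{j-1} \cong R/P_j$ for primes $P_j \in \Supp(M)$. Since $R$ is complete, each $R/P_j$ is a complete local domain, hence quasi-unmixed, of dimension at most $d$. For factors with $\dim R/P_j = d$ we invoke the quasi-unmixed case just proved at this level of the induction, while for factors with $\dim R/P_j < d$ we invoke the inductive hypothesis; in either case we obtain constants $C_j$ with $l(H_i(\ux; R/P_j)) \leq 2^k C_j\, l((R/P_j)/(\ux)(R/P_j))$. The long exact sequences of Koszul homology attached to the filtration yield $l(H_i(\ux; M)) \leq \sum_j l(H_i(\ux; R/P_j))$, while Theorem~\ref{thm:Length2}(2) (since $\Supp(R/P_j) \subseteq \Supp(M)$) gives $l((R/P_j)/(\ux)(R/P_j)) \leq C'_j\, l(M/\ux M)$; summing over $j$ completes the induction.

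The principal obstacle is uniformity in $\ux$: while the $2^k$ factor drops out cleanly from Lemma~\ref{ses}, one must carefully track that every auxiliary constant---arising from the finitely many modules $N_l$ and $R/P_j$, all determined entirely by $M$---depends only on $M$ and not on $\ux$. This is why Corollary~\ref{cor.integralclosure}, and hence quasi-unmixedness, is invoked precisely at the step comparing $l(M/\uy M)$ to $l(M/\ux M)$, and why the induction must be structured to prove the quasi-unmixed case at dimension $d$ before the general case at dimension $d$.
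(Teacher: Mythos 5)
Your proof is correct and takes essentially the same route as the paper's: induction on dimension, reducing $(\ux)$ to a minimal reduction via Lemma~\ref{ses} and Corollary~\ref{cor.integralclosure}, then using Remark~\ref{sseq} to drop to the lower-dimensional Ext modules and invoking the inductive hypothesis, with Theorem~\ref{thm:Length2} supplying the colength comparisons. The differences are only organizational—you prove the quasi-unmixed case directly on $M$ (using Theorem~\ref{thm:Length2}(2) where the paper first reduces to the cyclic modules $R/P_j$ via the prime filtration and then uses the simpler bound $l(L/(\uy)L)\le \mu(L)\,l(R/(\uy))$)—and your integral-closure chain should be read inside $\ol R=R/\Ann(M)$, where $(\ux)\ol R\subseteq\overline{(\uy)\ol R}$ genuinely holds (the containment need not hold for closures computed in $R$), exactly as the paper arranges by killing $\Ann(M)$ at the outset.
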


\begin{proof}
As in the first paragraph in the proof of Theorem \ref{Thm:LechMod}, we can replace $R$ by $R/\Ann(M)$, enlarge the residue field of $R$, and then complete $R$. Therefore, we can assume that $R$ is a complete local ring with infinite residue field and that $M$ is a faithful $R$-module. (The rest of the proof
only relies on the fact that $(R,\fm)$ is a homomorphic image of a Gorenstein ring $S$ with infinite residue field.)

We proceed by induction on $d= \dim(M)$. When $\dim(M)= 0$, clearly $C = l(M)$ works.  Assume that the theorem holds for modules of dimension $< d$. Now let
$\dim(M) = d$. Let $R/P_1, \ldots, R/P_r$ be the (not necessarily distinct) factors appearing in
a prime cyclic filtration of $M$.  We note that, for each $0 \leq i \leq d$,
\[
\dfrac{l(H_i(x_1, \ldots, x_k;M))}{l(M/(x_1, \ldots, x_{k})M)} \leq \dfrac{\sum_{j=1}^r l(H_i(x_1, \ldots, x_{k};R/P_j))}{l(M/(x_1, \ldots, x_k)M)} \leq \sum_{j=1}^r \dfrac{ l(H_i(x_1, \ldots, x_k;R/P_j))}{l(M/((x_1, \ldots, x_k)+P_j)M)}.
\]
Now each $M/P_jM$ is a finitely generated faithful module over the
local domain $R/P_j$ with infinite residue field.  It then follows
from Lemma~\ref{lem:Length2} or Theorem~\ref{thm:Length2}
(applied to $M/P_jM$ and $R/P_j$) that
we may replace each term $l(M/((x_1, \ldots,
x_k)+P_j)M)$ by $l(R/((x_1, \ldots, x_k)+P_j)R)$ without affecting
the claim of the theorem.  We have now reduced to the
case of $M = R/P_j$ over the local domain $R/P_j$ of dimension $\leq d$.

Therefore, it suffices to verify the case of $M = R$ where $R$ is a
domain with $\dim(R) = d$. (We still have that
$R$ is a homomorphic image of a Gorenstein local ring $S$ with
infinite residue field.)

Let $(y_1, \dotsc, y_d)$ be a minimal reduction of $(x_1, \dotsc,
x_k)$. By Corollary \ref{cor.integralclosure} and Lemma~\ref{ses}, it suffices to
find a constant $D$ such that
\[
\frac{l(H_i(y_1, \ldots, y_{d};R))}
{l(R/(y_1,\ldots,y_{d}))} \le D
\]
for all systems of parameters $\uy:= y_1, \dotsc, y_d$ of $R$ and for all
$i = 0, \dotsc, d$. Now by Remark \ref{sseq}, it suffices to show that, for any
fixed finitely generated $R$-module $L$ with $\dim(L) < d$, there exists a
constant $D_L$ such that
\[
\frac{l(H_i(y_1, \ldots, y_{d};L))}
{l(R/(y_1,\ldots,y_{d}))} \le D_L
\]
independent of $\uy$ and $i$. Indeed, as $\l(L/(y_1,\ldots,y_{d})L)
\le \mu(L)l(R/(y_1,\ldots,y_{d}))$, we have
\[
\frac{l(H_i(y_1, \ldots, y_{d};L))}
{l(R/(y_1,\ldots,y_{d}))}
\le \mu(L) \frac{l(H_i(y_1, \ldots, y_{d};L))}
{l(L/(y_1,\ldots,y_{d})L)}.
\]
Since $\dim(L) < d$, the right hand side of the above inequality is
bounded above (independent of $\uy$ and $i$) by the inductive
hypothesis (noting that $2^d$ is a constant as well).
\end{proof}

It is well known that (for example, see \cite{Dut83}) if $R$ is a complete local
domain of characteristic $p>0$ and dimension $d \geq 1$, then, for every system of parameters $(x_1, \ldots, x_d)$ of
$R$, $\frac{l(H_i(x_1^{p^e}, \ldots, x_d^{p^e}; R))}{l(H_0(x_1^{p^e}, \ldots, x_d^{p^e}; R))}
\xrightarrow{e \rightarrow \infty} 0$ for each $1 \leq i \leq d$.
This classical result is essentially saying that the length of higher Koszul homology modules tends to $0$ compared to the length of the $0$-th Koszul homology module when we raise any system of parameters to high Frobenius powers. Our final result is a generalization of this result to a characteristic-free version.  More importantly, the convergence to $0$ occurs in a way that is independent of the system of parameters!

\begin{theorem}\label{Uniform0}
Let $R$ be a Noetherian local ring and $M$ be a finitely generated
quasi-unmixed $R$-module with $\dim(M) = d$.  For
every $\varepsilon>0$, there exists $t_0$ such that, for all $t \geq
t_0$, all systems of parameters $\ux:= x_1, \ldots, x_d$ of
$M$, and all $1 \leq i \leq d$,
\[
\frac{l(H_i(x_1^t, \ldots, x_d^t; M))}{l(M/(x_1^t, \ldots, x_d^t)M)}
<\varepsilon.
\]
In fact, there exists a constant $K$ such that for all $t \geq
1$, all systems of parameters $\ux:= x_1, \ldots, x_d$ of
$M$, and all $1 \leq i \leq d$,
\[
\frac{l(H_i(x_1^t, \ldots, x_d^t; M))}{l(M/(x_1^t, \ldots, x_d^t)M)}
\le \frac K{t^i}.
\]
\end{theorem}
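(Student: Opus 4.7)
\emph{Reductions.} The first ($\epsilon$--$t_0$) statement follows immediately from the existence of such a $K$, so the plan is to produce $K$. I will first complete $R$ and replace $R$ by $R/\Ann(M)$---neither step alters any of the relevant lengths or multiplicities---so as to assume $R$ is a complete Noetherian local ring, $M$ is a faithful $R$-module of dimension $d$ (so $\dim R = d$ and every sop of $M$ is a sop of $R$), and $R$ is a homomorphic image of a Gorenstein local ring $S$ of some dimension $n$. Setting $J_t = (x_1^t, \ldots, x_d^t)$, Theorem~\ref{Thm:LechMod} gives
\[
l(M/J_t M) \;\geq\; \frac{e(J_t, M)}{d!\, e(R)} \;=\; \frac{t^d \, e((\underline{x}), M)}{d!\, e(R)},
\]
so the problem reduces to exhibiting $K'$, depending only on $M$, such that $l(H_i(\underline{x}^t; M)) \leq K' \, t^{d-i}\, e((\underline{x}), M)$ uniformly in $\underline{x}$ and $t$.

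\emph{Splitting via local duality.} I will next apply Remark~\ref{sseq} to obtain
\[
l(H_i(\underline{x}^t; M)) \;\leq\; \sum_{j=0}^{d-i} l\bigl(H_{d-i-j}(\underline{x}^t;\, N_j)\bigr), \qquad N_j := \Ext^{n-j}_S(M, S),
\]
where each $N_j$ has $\dim N_j \leq d-i$. For each $j$, Theorem~\ref{Thm:HomoBound} applied to $N_j$ with $k = d$ yields a constant $C_{N_j}$, depending only on $N_j$, with
\[
l\bigl(H_{d-i-j}(\underline{x}^t;\, N_j)\bigr) \;\leq\; 2^d\, C_{N_j}\, l(N_j/J_t N_j),
\]
reducing the task to bounding $l(N_j/J_t N_j)$ by a constant multiple of $t^{d-i}\, e((\underline{x}), M)$.

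\emph{Controlling $l(N_j/J_t N_j)$.} I will fix a prime cyclic filtration of $N_j$ with factors $R/P$, $P \in \Supp(N_j) \subseteq \Spec R$; each $R/P$ is quasi-unmixed since $R$ is complete. Applying Theorem~\ref{Thm:Vogel} factor by factor gives
\[
l(N_j/J_t N_j) \;\leq\; \sum_P l\bigl((R/P)/J_t(R/P)\bigr) \;\leq\; \sum_P n(R/P)\, e(J_t, R/P).
\]
The elementary inclusions $(\underline{x})^{d(t-1)+1} \subseteq J_t \subseteq (\underline{x})^t$, combined with the identity $e(I^s, R/P) = s^{\dim R/P}\, e(I, R/P)$, yield $e(J_t, R/P) \leq d^{\dim R/P}\, t^{\dim R/P}\, e((\underline{x}), R/P)$. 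For each such $P$, I will pick $Q \in \Assh(M)$ with $Q \subseteq P$---such $Q$ exists because $P$ contains a minimal prime of $R$, which (by equidimensionality of $R$ and faithfulness of $M$) lies in $\Assh(M)$---and then apply the inductive argument inside the proof of Theorem~\ref{Thm:Vogel} to the complete local domain $R/Q$ and its prime $P/Q$ (of smaller dimension, since $\dim R/P \leq d-i < d$). This will produce a constant $c_P$, independent of $\underline{x}$, with $e((\underline{x}), R/P) \leq c_P\, e((\underline{x}), R/Q) \leq c_P\, e((\underline{x}), M)$, the last inequality by associativity of multiplicities. Since $\dim R/P \leq \dim N_j \leq d-i$, these estimates combine to bound $l(N_j/J_t N_j)$ by a constant multiple of $t^{d-i}\, e((\underline{x}), M)$; summing over $j$ yields the desired $K'$.

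\emph{Main obstacle.} The crux will be the uniform comparison $e((\underline{x}), R/P) \leq c_P\, e((\underline{x}), M)$, which must hold over \emph{every} system of parameters $\underline{x}$ of $M$; this is precisely what the inductive descent in the proof of Theorem~\ref{Thm:Vogel}---powered by the uniform Artin--Rees estimate (Lemma~\ref{lem:Huneke}) on the complete local domain $R/Q$---is designed to supply, and it is the only nontrivial ingredient. Everything else amounts to bookkeeping of polynomial degrees in $t$ extracted from the inclusions $(\underline{x})^{d(t-1)+1} \subseteq J_t \subseteq (\underline{x})^t$.
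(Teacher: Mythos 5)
Your proposal is correct and follows essentially the same route as the paper's proof: reduce to the complete faithful case, use Remark~\ref{sseq} plus a prime cyclic filtration to pass to modules of dimension $\le d-i$, apply Theorem~\ref{Thm:HomoBound} and Theorem~\ref{Thm:Vogel} to convert Koszul homology lengths into multiplicities, and then invoke Equation~\eqref{eqn:supR/P} together with the associativity formula to compare against $e((\ux),M)$, with the $t$-power bookkeeping giving the factor $t^{-i}$. The only cosmetic differences are that you bound the denominator via Theorem~\ref{Thm:LechMod} (the paper uses the elementary $l(M/J_tM)\ge e(J_t,M)$ for parameter ideals) and you use the inclusion $(\ux)^{d(t-1)+1}\subseteq(x_1^t,\dots,x_d^t)$ to estimate $e(J_t,R/P)$, both of which are harmless.
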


\begin{proof}
As usual, we replace $R$ by $\overline{R}=R/\Ann(M)$ and complete $R$ to assume that $R$ is complete and that $M$ is faithful over $R$. Our hypothesis then implies that both $M$ and $R$ are equidimensional. (The rest of the proof only relies on the fact that $M$ is equidimensional and that $(R,\fm)$ is a homomorphic image of a Gorenstein local ring $S$.)

Because we consider only finitely many $i$, it is sufficient to fix
some $1 \leq i \leq d$.  By Remark \ref{sseq}, it suffices to show that, for
any fixed finitely generated $R$-module $L$ with $\dim(L) \le d-i$ and
any fixed $j = 0, \dotsc, d-i$, there
exists a constant $K$ such that, for all $t \ge 1$ and all $\ux$,
\[
\dfrac{l(H_j(x_1^t, \ldots, x_d^t; L))}{l(M/(x_1^t, \ldots, x_d^t)M)}
\le \frac K{t^i}.
\]
By taking a prime cyclic filtration of $L$, it suffices to show that,
for any fixed $P \in \Spec(R)$ such that $\dim(R/P) = d' \le d-i$,
there exists a constant $K$ such that, for all $t \ge 1$ and all $\ux$,
\[
\dfrac{l(H_j(x_1^t, \ldots, x_d^t;R/P))}{l(M/(x_1^t, \ldots, x_d^t)M)}
\le \frac K{t^i}.
\]
Denote $D := R/P$. By Theorem~\ref{Thm:HomoBound}, we fix
\[
C = \sup_{\substack{\sqrt{(x_1,\ldots, x_d)}=\fm}}
\left\{\frac{l(H_j(x_1, \ldots, x_d;D))}{l(D/(x_1,\ldots,x_d)D)}
\right\} <\infty.
\]
According to Theorem~\ref{Thm:Vogel}, we let
\[
B  = \sup_{\sqrt{I}=\fm}\left\{\frac{l(D/ID)}{e(I,D)} \right\}<\infty.
\]
Moreover, as $M$ is equidimensional, there exists $Q \in \Min(R) = \Min(M)$ such that $Q
\subsetneq P$ so that $D$ is a proper homomorphic image of $R/Q$, in which case
$\dim(R/Q) = d > d' = \dim(D)$ and $e(I, R/Q) \le e(I, M)$ for all
$\fm$-primary ideals $I$ (since $\dim(R/Q) = \dim(M)$). In light of Equation~\eqref{eqn:supR/P} in
the proof of Theorem~\ref{Thm:Vogel}, we set
\[
A = \sup_{\sqrt{I}=\fm}\left\{\frac{e(I, D)}{e(I, R/Q)}\right\}<\infty.
\]
Finally, we see
\begin{align*}
\frac{l(H_j(x_1^t, \dotsc, x_d^t;D))}{l(M/(x_1^t, \dotsc, x_d^t)M)}
&\leq \frac{l(H_j(x_1^t, \dotsc, x_d^t;D))}{e((x_1^t, \dotsc, x_d^t),M)} \\
&\leq C \frac{l(D/(x_1^t, \dotsc, x_d^t)D)}{e((x_1^t, \dotsc, x_d^t),M)} \\
&\leq BC \frac{e((x_1^t, \dotsc, x_d^t),D)}{e((x_1^t, \dotsc, x_d^t),M)} \\
& = BC \frac{t^{d'}e((x_1, \dotsc, x_d),D)}{t^de((x_1, \dotsc, x_d),M)} \\
& \leq \frac {BC}{t^{d-d'}}
\frac{e((x_1, \dotsc, x_d),D)}{e((x_1, \dotsc, x_d),R/Q)}
\leq \frac {ABC}{t^{d-d'}} \le \frac {ABC}{t^i},
\end{align*}
whose convergence to $0$, as $t \to \infty$, is independent of
systems of parameters $\ux:= x_1, \ldots, x_d$.
\end{proof}

\begin{lemma}\label{koszul-expo}
Let $R$ be a (Noetherian) ring, $M$ a finitely generated
$R$-module, $\ux:= x_1, \ldots, x_d$ a sequence of elements of $R$ such that
$l(M/(\ux)M) < \infty$, and $t_j \ge s_j \ge 1$ for $1 \le j \le
d$. Then for all $i$, we have
\[
l(H_i(x_1^{t_1}, \ldots, x_d^{t_d}; M))
\le l(H_i(x_1^{s_1}, \ldots, x_d^{s_d}; M)) \prod_{j=1}^d \frac{t_j}{s_j}
\]
\end{lemma}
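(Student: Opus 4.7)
The plan is to telescope the exponent change one coordinate at a time. Set $\ul{u}_0=\ul{s}$ and $\ul{u}_j=(t_1,\dots,t_j,s_{j+1},\dots,s_d)$ for $j=1,\dots,d$, so $\ul{u}_d=\ul{t}$. The lemma will follow by multiplying, over $j$, single-coordinate bounds of the form
\[
l\bigl(H_i(x_1^{u_1},\dots,x_j^{t_j},x_{j+1}^{s_{j+1}},\dots,x_d^{s_d};M)\bigr)\le \frac{t_j}{s_j}\,l\bigl(H_i(x_1^{u_1},\dots,x_j^{s_j},x_{j+1}^{s_{j+1}},\dots,x_d^{s_d};M)\bigr).
\]
Since Koszul homology is invariant up to isomorphism under permuting the entries, I may assume that the coordinate being changed is the last one $x_d$.

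Now I would reduce this single-coordinate step to monotonicity statements for a single element $z\in R$. From the mapping-cone decomposition $K(\uy',z^a;M)=K(z^a;R)\otimes_R K(\uy';M)$, with $\uy'$ the remaining variables raised to their chosen exponents, the associated long exact sequence in homology yields
\[
0\to N/z^a N\to H_i(\uy',z^a;M)\to \Ann_{N'}(z^a)\to 0,
\]
where $N=H_i(\uy';M)$ and $N'=H_{i-1}(\uy';M)$. The hypothesis $l(M/(\ux)M)<\infty$ forces every $H_k(\ux^{\ul{v}};M)$ (for any positive exponents $\ul{v}$) to have finite length, since it is a finitely generated subquotient of $M^{\binom{d}{k}}$ whose support lies in $\Supp(M/(\ux)M)$, a finite set of maximal ideals. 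Consequently $l(N/z^a N)$ and $l(\Ann_{N'}(z^a))$ are finite for every $a\ge 1$. The single-coordinate step therefore reduces to showing that, for any finitely generated $R$-module $L$ and any $z\in R$ with the relevant lengths finite, both $l(L/z^tL)/t$ and $l(\Ann_L(z^t))/t$ are non-increasing in $t$; applying this twice (to $L=N$ and $L=N'$) and adding through the short exact sequence produces the single-coordinate bound.

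The heart of the proof is these two monotonicity statements, and both come from the observation that the arithmetic mean of a non-increasing non-negative sequence is non-increasing. For the colength, set $g(i)=l(z^iL/z^{i+1}L)$; multiplication by $z$ surjects $z^iL/z^{i+1}L$ onto $z^{i+1}L/z^{i+2}L$, so $g$ is non-increasing, and $l(L/z^tL)=\sum_{i=0}^{t-1}g(i)$ makes $l(L/z^tL)/t$ the running average of $g$. For the annihilator, set $k(i)=l(\Ann_L(z^{i+1})/\Ann_L(z^i))$; multiplication by $z$ induces an \emph{injection} $\Ann_L(z^{i+2})/\Ann_L(z^{i+1})\hookrightarrow \Ann_L(z^{i+1})/\Ann_L(z^i)$ (if $m\in\Ann_L(z^{i+2})$ satisfies $zm\in\Ann_L(z^i)$, then $z^{i+1}m=0$, i.e., $m\in\Ann_L(z^{i+1})$), so $k$ is non-increasing and the same averaging gives the claim for $l(\Ann_L(z^t))/t$.

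The main subtlety I anticipate is keeping track of finiteness of lengths in the auxiliary modules $N$ and $N'$, which themselves need not have finite length; this is handled by the uniform finiteness of the full Koszul homology noted above. With these points in place, combining the two single-variable bounds through the short exact sequence gives the single-coordinate estimate, and iterating along $\ul{u}_0,\dots,\ul{u}_d$ completes the proof.
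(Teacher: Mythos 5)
Your proof is correct and follows essentially the same route as the paper's: reduce by symmetry to changing one exponent at a time, use the short exact sequence $0\to H_i/z^aH_i\to H_i(\uy',z^a;M)\to \Ann_{H_{i-1}}(z^a)\to 0$, and deduce the bound from the fact that the successive quotients $z^{j-1}L/z^jL$ and $\Ann_L(z^j)/\Ann_L(z^{j-1})$ have non-increasing lengths (via the same onto and injective multiplication-by-$z$ maps), so the running averages decrease. Your explicit remark on finiteness of the relevant lengths is a point the paper leaves implicit, but the argument is the same.
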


\begin{proof}
By symmetry, it suffices to show
$l(H_i(x_1^{t}, x_2,\ldots, x_d; M))
\le l(H_i(x_1^{s}, x_2,\ldots, x_d; M)) \frac{t}{s}$ for all $t \ge s
\ge 1$. For each $i$, denote $H_i = H_i(x_2,\ldots, x_d; M)$. From the
exact sequence
\[
H_i \xrightarrow{x_1^t\cdot} H_i \to H_i(x_1^{t}, x_2,\ldots, x_d; M)
\to H_{i-1} \xrightarrow{x_1^t\cdot} H_{i-1}
\]
we see that
\begin{align*}
l(H_i(x_1^{t}, x_2,\ldots, x_d; M))
&= l(H_i/x_1^tH_i) + l((0:_{H_{i-1}} x_1^t)) \\
&=\sum_{j=1}^t l(x_1^{j-1}H_i/x_1^jH_i)
+ \sum_{j=1}^t l((0:_{H_{i-1}} x_1^j)/(0:_{H_{i-1}} x_1^{j-1})).
\end{align*}
Now, as we can do the above to $l(H_i(x_1^{s}, x_2,\ldots, x_d; M))$
as well,
it suffices to show the sequences $\{l(x_1^{j-1}H_i/x_1^jH_i)\}_j$ and
$\{l((0:_{H_{i-1}} x_1^j)/(0:_{H_{i-1}} x_1^{j-1}))\}_j$ are both
non-increasing. But this follows because the following maps induced by
multiplication by $x_1$:
\[
\frac{x_1^{j-1}H_i}{x_1^jH_i} \xrightarrow{x_1\cdot} \frac{x_1^{j}H_i}{x_1^{j+1}H_i}
\quad\text{and} \quad
\frac{(0:_{H_{i-1}} x_1^{j+1})}{(0:_{H_{i-1}} x_1^{j})} \xrightarrow{x_1\cdot}
\frac{(0:_{H_{i-1}} x_1^j)}{(0:_{H_{i-1}} x_1^{j-1})},
\]
are onto and 1-1 respectively.
\end{proof}

The next theorem generalizes the uniform convergence
established in Theorem~\ref{Uniform0}.

\begin{theorem}\label{Uniform0-ti}
Let $R$ be a Noetherian local ring and $M$ a finitely generated
quasi-unmixed $R$-module with $\dim(M) = d$.  For
every $\varepsilon>0$, there exists $t_0$ such that, for all $t_j \geq
t_0$ with $1 \le j \le d$, all systems of parameters $\ux:= x_1,
\ldots, x_d$ of $M$, and all $1 \leq i \leq d$,
\[
\frac{l(H_i(x_1^{t_1}, \ldots, x_d^{t_d}; M))}{l(M/(x_1^{t_1}, \ldots, x_d^{t_d})M)}
<\varepsilon.
\]
In fact, there exists a constant $K$ such that for all $t_j \geq
1$ with $1 \le j \le d$, all systems of parameters $\ux:= x_1, \ldots, x_d$ of
$M$, and all $1 \leq i \leq d$,
\[
\frac{l(H_i(x_1^{t_1}, \ldots, x_d^{t_d}; M))}{l(M/(x_1^{t_1}, \ldots, x_d^{t_d})M)}
\le \frac K{(\min_jt_j)^i}.
\]
\end{theorem}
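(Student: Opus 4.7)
The plan is to reduce to the uniform-exponent case already handled in Theorem \ref{Uniform0}, with Lemma \ref{koszul-expo} providing the crucial link. Set $s := \min_j t_j \ge 1$. Applying Lemma \ref{koszul-expo} with $s_j := s$ for every $j$ gives the numerator bound
\[
l(H_i(x_1^{t_1}, \ldots, x_d^{t_d}; M)) \le l(H_i(x_1^s, \ldots, x_d^s; M)) \cdot \frac{t_1 \cdots t_d}{s^d}.
\]
For the denominator, since $x_1^{t_1}, \ldots, x_d^{t_d}$ is still a system of parameters on $M$ with $e((x_1^{t_1}, \ldots, x_d^{t_d}), M) = t_1 \cdots t_d \cdot e((\ux), M)$, Theorem \ref{Thm:LechMod} yields
\[
l(M/(x_1^{t_1}, \ldots, x_d^{t_d})M) \ge \frac{t_1 \cdots t_d \cdot e((\ux), M)}{d!\, e(\overline R)}, \qquad \overline R := R/\Ann(M).
\]
Dividing, the factor $t_1 \cdots t_d$ cancels, leaving
\[
\frac{l(H_i(x_1^{t_1}, \ldots, x_d^{t_d}; M))}{l(M/(x_1^{t_1}, \ldots, x_d^{t_d})M)} \le \frac{d!\, e(\overline R)\, l(H_i(x_1^s, \ldots, x_d^s; M))}{s^d \cdot e((\ux), M)} = \frac{d!\, e(\overline R)\, l(H_i(x_1^s, \ldots, x_d^s; M))}{e((x_1^s, \ldots, x_d^s), M)}.
\]

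To finish, I will bound the last ratio uniformly in $\ux$ and $i$. By Theorem \ref{Uniform0}, there is a constant $K_0$ depending only on $M$ such that $l(H_i(x_1^s, \ldots, x_d^s; M)) \le (K_0/s^i) \cdot l(M/(x_1^s, \ldots, x_d^s)M)$ for all systems of parameters $\ux$ of $M$, all $1 \le i \le d$, and all $s \ge 1$. Since $M$ is quasi-unmixed, Theorem \ref{Thm:Vogel} gives $l(M/JM) \le n(M) \cdot e(J, M)$ for every ideal $J$ whose image in $\overline R$ is $\fm$-primary; in particular, $l(M/(x_1^s, \ldots, x_d^s)M) \le n(M) \cdot e((x_1^s, \ldots, x_d^s), M)$. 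Substituting back,
\[
\frac{l(H_i(x_1^{t_1}, \ldots, x_d^{t_d}; M))}{l(M/(x_1^{t_1}, \ldots, x_d^{t_d})M)} \le \frac{d!\, e(\overline R)\, n(M)\, K_0}{s^i} = \frac{K}{(\min_j t_j)^i},
\]
with $K := d!\, e(\overline R)\, n(M)\, K_0$ depending only on $M$. This proves the quantitative assertion; the first ($\varepsilon$-form) statement then follows at once by choosing $t_0$ large enough that $K/t_0^i < \varepsilon$ for every $1 \le i \le d$.

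The main obstacle is really conceptual rather than technical: one has to notice that Lemma \ref{koszul-expo} trades non-uniform exponents for uniform ones at the cost of precisely the factor $t_1 \cdots t_d$, which is exactly what multiplicativity of the Hilbert--Samuel multiplicity produces in the denominator via Lech's inequality, so these factors cancel cleanly and only the minimum exponent $s$ governs the decay rate. Once this cancellation is identified, the remaining ingredients---Theorem \ref{Thm:LechMod}, Theorem \ref{Thm:Vogel}, and the uniform-exponent estimate of Theorem \ref{Uniform0}---combine in a straightforward manner, and the constant $K$ in the conclusion depends only on $M$ (through $d$, $e(\overline R)$, $n(M)$, and $K_0$).
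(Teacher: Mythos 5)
Your argument is correct and is essentially the paper's own proof: set $s=\min_j t_j$, use Lemma~\ref{koszul-expo} to pass to uniform exponents at the cost of the factor $\prod_j t_j/s^d$, cancel that factor against the multiplicity $e((x_1^{t_1},\dots,x_d^{t_d}),M)=\prod_j t_j\, e((\ux),M)$ via Theorem~\ref{Thm:LechMod}, convert back to colengths with Theorem~\ref{Thm:Vogel}, and finish with the uniform-exponent bound of Theorem~\ref{Uniform0}. The only difference is cosmetic (you substitute the explicit constant $K_0$ at the end rather than carrying the chain of inequalities), so there is nothing to add.
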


\begin{proof}
With $t_j \geq 1$ for $1 \le j \le d$, we denote $t = \min_j
t_j$. Then for all systems of parameters $\ux:= x_1, \ldots, x_d$ of
$M$ and all $1 \leq i \leq d$, we have
\begin{align*}
\frac{l(H_i(x_1^{t_1}, \ldots, x_d^{t_d}; M))}{l(M/(x_1^{t_1}, \ldots, x_d^{t_d})M)}
&\le \frac{l(H_i(x_1^{t}, \ldots, x_d^{t}; M))}
{l(M/(x_1^{t_1}, \ldots, x_d^{t_d})M)}
\prod_{j= 1}^d\frac{t_j}t && \text{(Lemma~\ref{koszul-expo})}\\
&\le \frac{l(H_i(x_1^{t}, \ldots, x_d^{t}; M))}
{e((x_1^{t_1}, \ldots, x_d^{t_d}),M)} {d!e(\ol R)}
\prod_{j= 1}^d\frac{t_j}t  && \text{(Theorem~\ref{Thm:LechMod})}\\
&=\frac{l(H_i(x_1^{t}, \ldots, x_d^{t}; M))}
{e((x_1^{t}, \ldots, x_d^{t}),M)} {d!e(\ol R)}\\
&\le \frac{l(H_i(x_1^{t}, \ldots, x_d^{t}; M))}
{l(M/(x_1^{t}, \ldots, x_d^{t})M)} n(M) {d!e(\ol R)}
&& \text{(Theorem~\ref{Thm:Vogel})}
\end{align*}
in which $\ol R = R/\Ann(M)$.
Now Theorem~\ref{Uniform0} completes the proof.
\end{proof}

\begin{remark}\label{Uniform0SV}
We would like to mention that, in Theorem~\ref{Uniform0} (hence in
Theorem~\ref{Uniform0-ti}), the
assumption that $M$ is quasi-unmixed is necessary
{(at least when $R$ has infinite residue field)}.
In fact, the
conclusion of Theorem~\ref{Uniform0} (i.e., the uniform convergence to $0$) for
$M$ implies $n(M) < \infty$ {provided that $R$ has
  infinite residue field}, which forces $M$ to be quasi-unmixed by
\cite[Theorem 1]{SV96}.
For details, the existence of $t$ such that
$\frac{\sum_{i=1}^d(-1)^{i-1}l(H_i(x_1^t, \ldots, x_d^t; M))}
{l(M/(x_1^t, \ldots, x_d^t)M)}
<\varepsilon < 1$ for all systems of parameters $\ux:= x_1, \ldots,
x_d$ of $M$ implies
\begin{align*}
\dfrac{e((x_1, \ldots, x_d),M)}{l(M/(x_1, \ldots, x_d)M)}
& = \dfrac{e((x_1^t, \ldots, x_d^t), M)}{t^dl(M/(x_1, \ldots, x_d)M)}
\geq \dfrac{e((x_1^t, \ldots, x_d^t), M)}{t^dl(M/(x_1^t, \ldots, x_d^t)M)} \\
&\geq \dfrac{1}{t^d} \left(\dfrac{l(M/(x_1^t, \ldots,
    x_d^t)M)}{l(M/(x_1^t, \ldots, x_d^t)M)}
  - \dfrac{\sum_{i=1}^d(-1)^{i-1}l(H_i(x_1^t, \ldots, x_d^t; M))}
{l(M/(x_1^t, \ldots, x_d^t)M)}\right)
> \frac{1-\varepsilon}{t^d}
\end{align*}
for all systems of parameters $\ux:= x_1, \ldots, x_d$ of $M$, which
implies $n(M) < \infty$ {(given that $R$ has infinite
  residue field)}.
\end{remark}

\begin{remark}\label{SVbyInduction}
Evidently the results of this section rely on
Theorem~\ref{Thm:Vogel}. However, a careful analysis of the proofs in
this section reveals an alternative proof of
Theorem~\ref{Thm:Vogel} by induction on the dimension of the quasi-unmixed
$R$-module $M$ without explicit usage of homological degree or unmixed
degree. Without loss of generality, assume that $(R,\fm)$ is
complete with infinite residue field (thus, $R$ is a homomorphic image of a Gorenstein ring $S$ with
$\dim(S) = n$).
When $\dim(M) = 0$, it is clear that
Theorem~\ref{Uniform0},
Theorem~\ref{Thm:Vogel},
Lemma~\ref{lem:Length},
Corollary~\ref{cor.integralclosure},
Lemma~\ref{lem:Length2}, and
Theorem~\ref{Thm:HomoBound}
all hold. Now assume that \emph{all} these results hold
in dimension $<d$, and consider the case of dimension $d$. Then
Theorem~\ref{Uniform0} holds in dimension $d$ (because the proof of Theorem~\ref{Uniform0} only requires the aforementioned results in dimension $<d$), which implies
Theorem~\ref{Thm:Vogel} in dimension $d$ as we explained in
Remark~\ref{Uniform0SV}. Then we have
Lemma~\ref{lem:Length},
Corollary~\ref{cor.integralclosure},
Lemma~\ref{lem:Length2} and
Theorem~\ref{Thm:HomoBound} in dimension $d$, completing the
induction. Alternatively,
Theorem~\ref{Thm:HomoBound} and Theorem~\ref{Thm:Vogel} in dimension
$< d$ implies Theorem~\ref{Thm:Vogel} in dimension $d$ as follows:
It suffice to consider the case of $M = R$ being a domain. For an
arbitrary system of parameters $\uy:= y_1, \dotsc, y_d$ of $R$, we
have
\[
\frac{l(R/(\uy))}{e((\uy),R)}
\le \frac{e((\uy),R)+l(H_1(\uy;R))}{e((\uy),R)}
=1 + \frac{l(H_1(\uy;R))}{e((\uy),R)}.
\]
Similar to the reasoning in the proof of Theorem~\ref{Thm:HomoBound}
(plus taking prime cyclic filtration), it suffices to consider $R/P$
with $0 \neq P \in \Spec(R)$ and to find an upper bound for
$\frac{l(H_i(\uy;R/P))}{e((\uy),R)}$
for all systems of parameters $\uy$.
By Theorem~\ref{Thm:HomoBound} and Theorem~\ref{Thm:Vogel} in dimension
$< d$, it suffices to find an upper bound for
$
\frac{e(\uy,R/P)}{e((\uy),R)}
$
for all systems of parameters $\uy$. But this is Equation~\eqref{eqn:supR/P} in
the proof of Theorem~\ref{Thm:Vogel}.
\end{remark}

Even though the alternative proof sketched above does not use
extended degree explicitly, its approach is very similar to that of
homological degree: the alternative approach relies on Remark~\ref{sseq}
to reduce the dimension from $\dim(M)=d$ to $\dim(\Ext_S^{n-j}(M,S)) <
d$, $0 \le j < d$, while the homological degree involves
the same modules in its definition.

The following is an easy consequence of Theorem~\ref{Uniform0-ti}. It
says that for all systems of parameters $\ux = x_1, \dotsc, x_{d}$ on
$M$, the rate of convergence of
$\frac{l(M/(x_1^{t_1}, \dotsc,x_{d}^{t_{d}})M)}{\prod_{j=1}^{d}t_j}$
to $e((\ux),M)$ is uniformly controlled by $l(M/(\ux)M)$ and $\min_j t_j$ only.

\begin{corollary}\label{uniform-convergence}
Let $(R, \fm)$ be a Noetherian local ring and $M$ be a finitely generated quasi-unmixed $R$-module.
Then for every constant $C >0$ and every $\epsilon > 0$, there exists
$t_0 \in \mathbb N$ such that, for all $t_j \ge t_0$ with $1 \le j \le
d$, all systems of parameters $\ux = x_1, \dotsc, x_{d}$ on
$M$ such that $e((\ux),M) \le C$, we have
\[
0 \le \frac{l(M/(x_1^{t_1}, \dotsc,x_{d}^{t_{d}})M)}{\prod_{j=1}^{d}t_j} - e((\ux),M)
< \epsilon.
\]
In fact, there exists a constant $K$ such that,
for all $t_j \ge 1$ with $1 \le j \le d$ and all systems of parameters
$\ux = x_1, \dotsc, x_{d}$ on $M$, we have
\[
0 \le \frac{l(M/(x_1^{t_1}, \dotsc,x_{d}^{t_{d}})M)}{\prod_{j=1}^{d}t_j} - e((\ux),M)
\le e((\ux),M) \frac K{\min_j t_j}
\le l(M/(\ux)M) \frac K{\min_j t_j} .
\]
\end{corollary}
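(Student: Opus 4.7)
The starting point is the classical Serre formula for multiplicity via Koszul homology: for any system of parameters $\ux^{\underline t} := x_1^{t_1}, \dotsc, x_d^{t_d}$ on $M$,
\[
e((\ux^{\underline t}),M) \;=\; \sum_{i=0}^d (-1)^i\, l(H_i(\ux^{\underline t};M)),
\]
which rearranges to
\[
l(M/(\ux^{\underline t})M) - e((\ux^{\underline t}),M) \;=\; \sum_{i=1}^d (-1)^{i-1}\, l(H_i(\ux^{\underline t};M)).
\]
Dividing by $\prod_{j=1}^d t_j$ and using $e((\ux^{\underline t}),M) = \bigl(\prod_j t_j\bigr)\, e((\ux),M)$, I get
\[
\frac{l(M/(\ux^{\underline t})M)}{\prod_{j=1}^d t_j} - e((\ux),M) \;=\; \frac{\sum_{i=1}^d (-1)^{i-1}\, l(H_i(\ux^{\underline t};M))}{\prod_{j=1}^d t_j}.
\]
The non-negativity $0 \le \frac{l(M/(\ux^{\underline t})M)}{\prod_j t_j} - e((\ux),M)$ follows from the classical Serre bound $e(\fq,M) \le l(M/\fq M)$ for every parameter ideal $\fq$ of $M$, applied to $\fq = (\ux^{\underline t})$.

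For the quantitative upper bound, I will control each $l(H_i(\ux^{\underline t};M))$ via Theorem~\ref{Uniform0-ti}: there exists $K_0$ depending only on $M$ with
\[
l(H_i(\ux^{\underline t};M)) \;\le\; \frac{K_0}{(\min_j t_j)^i}\, l(M/(\ux^{\underline t})M)
\]
for all $1 \le i \le d$. Summing and using $\sum_{i=1}^d (\min_j t_j)^{-i} \le d/\min_j t_j$ (valid since $\min_j t_j \ge 1$), I obtain
\[
\sum_{i=1}^d l(H_i(\ux^{\underline t};M)) \;\le\; \frac{d K_0}{\min_j t_j}\, l(M/(\ux^{\underline t})M).
\]
Since $M$ is quasi-unmixed, Theorem~\ref{Thm:Vogel} gives $n(M) < \infty$, and therefore $l(M/(\ux^{\underline t})M) \le n(M)\, e((\ux^{\underline t}),M) = n(M)\bigl(\prod_j t_j\bigr) e((\ux),M)$. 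Plugging this back and setting $K := d K_0\, n(M)$ yields
\[
0 \;\le\; \frac{l(M/(\ux^{\underline t})M)}{\prod_j t_j} - e((\ux),M) \;\le\; \frac{K}{\min_j t_j}\, e((\ux),M),
\]
which is the main displayed inequality of the corollary. The final chain $e((\ux),M) \le l(M/(\ux)M)$ is once more Serre's inequality, applied with all exponents equal to $1$.

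The ``for every $\epsilon$'' portion is then immediate: under the hypothesis $e((\ux),M) \le C$, the above gives
\[
\frac{l(M/(\ux^{\underline t})M)}{\prod_j t_j} - e((\ux),M) \;\le\; \frac{KC}{\min_j t_j},
\]
which is below $\epsilon$ as soon as $\min_j t_j > KC/\epsilon$, so I may take $t_0 = \lceil KC/\epsilon\rceil + 1$. The only nontrivial inputs are Theorem~\ref{Uniform0-ti} (uniform vanishing of Koszul homology relative to $l(M/(\ux^{\underline t})M)$) and Theorem~\ref{Thm:Vogel} (finiteness of $n(M)$); everything else is bookkeeping. I do not expect any substantive obstacle here beyond cleanly threading the constants, since the structure is parallel to the derivation of Theorem~\ref{Uniform0-ti} from Theorem~\ref{Uniform0}.
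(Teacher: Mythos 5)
Your proof is correct and follows essentially the same route as the paper's: both express $l(M/(\ux^{[t]})M)-e((\ux^{[t]}),M)$ as the alternating sum of Koszul homology lengths, bound $l(M/(\ux^{[t]})M)$ by $n(M)\,e((\ux^{[t]}),M)$ via Theorem~\ref{Thm:Vogel}, and conclude with Theorem~\ref{Uniform0-ti}. Your only (harmless) deviation is bounding the alternating sum by the full sum and tracking the explicit constant $K = dK_0 n(M)$, which the paper leaves implicit in its final appeal to Theorem~\ref{Uniform0-ti}.
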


\begin{proof}
For all all systems of parameters $\ux = x_1, \dotsc, x_{d}$ on $M$,
we have (with $\ux^{[t]} := x_1^{t_1}, \dotsc, x_{d}^{t_{d}}$)
\begin{align*}
0  \le \frac{l(M/(\ux^{[t]})M)}{\prod_{j=1}^{d}t_j} - e((\ux),M)
&= \frac{l(M/(\ux^{[t]})M) - e((\ux^{[t]}),M)}{\prod_{j=1}^{d}t_j} \\
&= \frac{l(M/(\ux^{[t]})M)}{\prod_{j=1}^{d}t_j}
      \frac{\sum_{i=1}^{d}(-1)^{i-1} l(H_i(\ux^{[t]};M))}{l(M/(\ux^{[t]})M)}  \\
&\le\frac{e((\ux^{[t]}),M) n(M)}{\prod_{j=1}^{d}t_j}
      \frac{\sum_{i=1}^{d}(-1)^{i-1} l(H_i(\ux^{[t]};M))}{l(M/(\ux^{[t]})M)} \\
&= e((\ux),M) n(M) \frac{\sum_{i=1}^{d}(-1)^{i-1} l(H_i(\ux^{[t]};M))}{l(M/(\ux^{[t]})M)} \\
&\le e((\ux),M) \left(n(M) \sum_{i=1}^{d}(-1)^{i-1}\frac{
    l(H_i(\ux^{[t]};M))}{l(M/(\ux^{[t]})M)} \right).
\end{align*}
Now Theorem~\ref{Uniform0-ti} completes the proof.
\end{proof}

Finally, we remark that even though $n(M)$ could tend to $\infty$ as $M$ varies (see Example \ref{nlarge}), $\frac{n(M)}{\mu(M)}$ has an upper bound that depends only on $R/\Ann(M)$ (and not on $M$). We would like to thank the referee for pointing out this question.
\begin{remark}
\label{RemMinGen}
Let $(R,\fm)$ be a Noetherian local ring, and let $M$ be a finitely generated quasi-unmixed $R$-module. Set $\overline{R}=R/\Ann(M)$, which is quasi-unmixed by assumption and we have $\Assh(M)=\Assh(\overline{R})$. Let $c = \max_{P \in \Assh(\overline{R})} l_{\overline{R}_P}(\overline{R}_P)$. Clearly $l(M/IM)\leq \mu(M)l(\overline{R}/I\overline{R})$, and by the associativity formula for multiplicities $e(I, \overline{R})\leq ce(I, M)$.
Now for all $\fm$-primary ideals $I$ we have
$$\frac{l(M/IM)}{e(I,M)\mu(M)}= \frac{l(M/IM)}{\mu(M)l(\overline{R}/I\overline{R})}\cdot \frac{l(\overline{R}/I\overline{R})}{e(I, \overline{R})}\cdot \frac{e(I, \overline{R})}{e(I,M)}\leq cn(\overline{R}).$$ Therefore $\frac{n(M)}{\mu(M)}\leq cn(\overline{R})$, and the latter depends only on $\overline{R}=R/\Ann(M)$. Also note that if we take $R=k[[x,y]]$, $I_t=(x)\cap (x,y)^n$. Then $R/I_t$ is quasi-unmixed and $n(R/I_t)\geq t$: the ideal $(y)$ has multiplicity $1$ and colength $t$ in $R/I_t$. Therefore, in general the $n(\overline{R})$ (as $M$ varies) are not bounded in terms of invariants of $R$.
\end{remark}

\appendix
\section{Global version of the results}

In this appendix we briefly explain that our results and methods in
Section 4 work globally.
Most of the results rely on $\sup\{e(\ol R_P) \mid P \in
  \Spec(\ol R)\} < \infty$, with $\ol R = R/\Ann(M)$, and rely on the
  uniform Artin-Rees property (cf. \cite{Hun92}).
We observe that $\sup\{e(\ol R_P) \mid P \in \Spec(\ol R)\} <
\infty$ if the regular loci of all quotient domains of $\ol R$ are open
(e.g., $\ol R$ is excellent).
By \cite[Theorem~4.12]{Hun92}, the uniform Artin-Rees property holds for $R$
(hence holds for all its localizations $R_P$ with the same constant)
if $R$ is essentially of finite type over a Noetherian local ring or
$\mathbb Z$, or if $R$ is an F-finite Noetherian ring of prime
characteristic $p$.
We will also use the fact that a homomorphic image of a Cohen-Macaulay
local ring is quasi-unmixed if and only if it is equidimensional.

We start with the global version of 4.1--4.4.

\begin{lemma}
\label{lem:Length-g}
Let $R$ be a Noetherian ring that is a homomorphic image of a
Noetherian Gorenstein ring $S$ with $\dim (S)<\infty$, and let $M$ be
a finitely generated $R$-module such that $\ol R = R/\Ann(M)$ is
locally equidimensional and satisfies the uniform Artin-Rees property
with $\sup\{e(\ol R_P) \mid P \in \Spec(\ol R)\} < \infty$.
Suppose the residue field of $R_P$ is infinite for all $P\in\Supp(M)$. Then there exists a constant $C$ such that, for every $h \in \mathbb
R$ and every $P \in \Supp(M)$,
\[
\frac hC \le \inf_{\substack{\sqrt{I}= \sqrt{J} =P_P \\ e(I, M_P) \ge h e(J,M_P)}}
\left\{\frac{l(M_P/IM_P)}{l(M_P/JM_P)} \right\}
\quad \text{and} \quad
\sup_{\substack{\sqrt{I}= \sqrt{J} =P_P \\ e(I, M_P) \le h e(J,M_P)}}
\left\{\frac{l(M_P/IM_P)}{l(M_P/JM_P)} \right\} \le hC.
\]
By $\sqrt{I}= \sqrt{J} =P_P$, we regard $I$ and $J$ as
($P_P$-primary) ideals of $R_P$.
\end{lemma}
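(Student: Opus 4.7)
The plan is to reduce Lemma~\ref{lem:Length-g} to the local case, Lemma~\ref{lem:Length}, applied at each $P \in \Supp(M)$, and then to show that the resulting local constants $C_P$ admit a uniform upper bound independent of $P$.

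First I would verify that $M_P$ is quasi-unmixed for each $P \in \Supp(M)$, so that Lemma~\ref{lem:Length} is applicable. Because $\ol R = R/\Ann(M)$ is locally equidimensional and is a homomorphic image of the Gorenstein (hence Cohen-Macaulay) ring $S$, the general fact recalled at the start of the appendix---that a homomorphic image of a Cohen-Macaulay local ring is quasi-unmixed if and only if it is equidimensional---ensures $\ol R_P$ is quasi-unmixed, whence so is $M_P$.

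Next, applying Lemma~\ref{lem:Length} at $(R_P, PR_P)$ produces a constant $C_P = n(M_P)\cdot d_P!\cdot e(\ol R_P)$ (with $d_P := \dim M_P$) that witnesses the two-sided comparison for $P R_P$-primary ideals. It remains to bound each of the three factors uniformly in $P$. The dimension factor satisfies $d_P! \le (\dim S)!$ since $\dim(S) < \infty$. The multiplicity factor $e(\ol R_P)$ is bounded by the standing hypothesis $\sup\{e(\ol R_P) : P \in \Spec(\ol R)\} < \infty$. The invariant $n(M_P)$ admits a global upper bound by Remark~\ref{GlobalSV}, whose hypotheses---local equidimensionality of $\ol R$, the uniform Artin-Rees property, and infinite residue fields at all $P \in \Supp(M)$---are precisely those assumed here. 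Setting $C$ to be (a constant multiple of) the product of these three uniform bounds then yields a single constant valid for all $P \in \Supp(M)$ simultaneously, and the two displayed inequalities then follow exactly as in the proof of Lemma~\ref{lem:Length}: the lower half uses Theorem~\ref{Thm:LechMod} applied at $R_P$ (which requires no hypothesis on the residue field) together with the uniform bound on $n(M_P)$, and the upper half is symmetric.

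The main obstacle is extracting the uniform bound on $n(M_P)$ from Remark~\ref{GlobalSV}. One must trace the dimension induction in the proof of Theorem~\ref{Thm:Vogel} and check that each invocation of Lemma~\ref{lem:Huneke}---whose constant is furnished by the Artin-Rees lemma---can be performed with an Artin-Rees constant taken uniformly in $P$. This is exactly the content of the uniform Artin-Rees hypothesis: a single global constant for $R$ localizes to serve every $R_P$, so the cascade of constants generated by the induction remains controlled as $P$ varies over $\Supp(M)$. Once this is established, the rest of the argument is a direct transcription of the local proof.
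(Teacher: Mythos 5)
Your proposal is correct and follows essentially the same route as the paper: apply the local Lemma~\ref{lem:Length} to $M_P$ over $R_P$ for each $P\in\Supp(M)$, and observe that the resulting constant $n(M_P)\,\dim(M_P)!\,e(\ol R_P)$ is bounded uniformly in $P$ because $\dim(M_P)\le\dim(S)$, $e(\ol R_P)$ is bounded by hypothesis, and $n(M_P)$ has a global bound via Remark~\ref{GlobalSV} (whose reliance on the uniform Artin--Rees hypothesis you correctly identify). Your preliminary check that $\ol R_P$, hence $M_P$, is quasi-unmixed is a sensible addition that the paper leaves implicit.
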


\begin{proof}
For each $P$, do the same proof for $M_P$ over $R_P$ as in Lemma \ref{lem:Length}, and note that $n(M_P)$, $\dim(M_P)$ and $e(\overline R_P)$ have global
upper bounds (see Remark \ref{GlobalSV}).
\end{proof}

\begin{corollary}
\label{cor.integralclosure-g}
With notation and assumptions as in Lemma \ref{lem:Length-g}, we have
$$\sup_{\substack{ P \in \Supp(M) \\ \sqrt{I}=P_P \\ I\subseteq J\subseteq \overline{I}}}
\left\{\frac{l(M_P/IM_P)}{l(M_P/JM_P)} \right\}<\infty.$$
\end{corollary}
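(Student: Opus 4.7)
The plan is to deduce this statement directly from Lemma~\ref{lem:Length-g} in exactly the same way that Corollary~\ref{cor.integralclosure} is deduced from Lemma~\ref{lem:Length} in the local setting. The entire content of the corollary is to observe that the condition $I \subseteq J \subseteq \overline{I}$ forces $e(I, M_P) = e(J, M_P)$, so one is squarely in the situation to which Lemma~\ref{lem:Length-g} applies with $h=1$.

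More precisely, I would fix $P \in \Supp(M)$ and a pair of $P_P$-primary ideals $I \subseteq J \subseteq \overline{I}$ of $R_P$. Since $J$ has the same integral closure as $I$, we have $e(I, M_P) = e(J, M_P)$ by the standard fact that Hilbert--Samuel multiplicity depends only on the integral closure of the ideal. Applying Lemma~\ref{lem:Length-g} with $h = 1$ produces a constant $C$ (depending only on the data in the hypotheses of that lemma) such that
\[
\frac{l(M_P/IM_P)}{l(M_P/JM_P)} \le C
\]
for every such $P$ and every such pair $I, J$. Taking the supremum over all admissible $P$, $I$, $J$ gives the desired finiteness.

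The entire point is that the constant $C$ supplied by Lemma~\ref{lem:Length-g} is \emph{global}, i.e., independent of $P \in \Supp(M)$. This uniformity is already built into the hypotheses of the corollary through the assumptions $\sup\{e(\overline{R}_P) \mid P \in \Spec(\overline{R})\} < \infty$ and the uniform Artin--Rees property, which are precisely what is needed to make the local arguments of Section 4 globalize. Consequently there is no genuine obstacle here: once Lemma~\ref{lem:Length-g} is in hand, the proof is a one-line specialization to $h = 1$, mirroring the proof of the local Corollary~\ref{cor.integralclosure}.
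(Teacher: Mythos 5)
Your proposal is correct and follows exactly the paper's argument: the condition $I \subseteq J \subseteq \overline{I}$ gives $e(I, M_P) = e(J, M_P)$, and then Lemma~\ref{lem:Length-g} with $h=1$ (whose constant is already uniform in $P$) yields the claim. Nothing further is needed.
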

\begin{proof}
Apply Lemma \ref{lem:Length-g} with $h=1$ to get the desired claim.
\end{proof}


\begin{lemma}\label{lem:Length2-g}
Let $R$ be a Noetherian ring that is a homomorphic image of a
Noetherian Gorenstein ring $S$ with $\dim (S)<\infty$, and let $N$ be
a finitely generated $R$-module such that $R/\Ann(N)$ is locally
equidimensional and satisfies the uniform Artin-Rees property
with $\sup\{e(\ol R_P) \mid P \in \Spec(\ol R)\} < \infty$.
Suppose the residue field of $R_P$ is infinite for all $P\in\Supp(N)$. Then there exists a constant $C_N>0$ depending only on $N$ such that
\[
\sup_{\sqrt{I} =P_P}\left\{\frac{l(N_P/IN_P)}{l(M_P/IM_P)} \right\} \le C_N
\]
for all $P \in \Supp(N)$, all $P_P$-primary ideals $I$, and all finitely
generated $R$-modules $M$ such that $\Supp(M_P) \supseteq \Supp(N_P)$.
(Note that such $M_P$ covers all finitely generated $R_P$-modules whose supports
contain the support of $N_P$.)
\end{lemma}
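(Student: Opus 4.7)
\medskip
\noindent\textbf{Plan of proof.} The plan is to localize the proof of Lemma~\ref{lem:Length2} at each $P \in \Supp(N)$ and then check that every numerical ingredient in the resulting constant admits a global upper bound depending only on $N$.

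As a preliminary step, I would verify that Theorem~\ref{Thm:Vogel} is available at each $R_P$. The ring $R_P$ is a homomorphic image of the Gorenstein local ring $S_{P'}$ (for $P'$ the preimage of $P$ in $S$), and $\ol R_P = (R/\Ann N)_P$ is equidimensional by the local equidimensionality hypothesis; by the fact (recalled at the beginning of this appendix) that a homomorphic image of a Cohen--Macaulay local ring is quasi-unmixed iff it is equidimensional, this makes $N_P$ quasi-unmixed. Crucially, Remark~\ref{GlobalSV} then provides a uniform bound $n_0 := \sup_{P \in \Supp(N)} n(N_P) < \infty$.

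Next, set $\fa := \bigcap_{Q \in \Min(N)}Q = \sqrt{\Ann N}$ and $\ol M := M/\fa M$. One checks that this construction commutes with localization ($\fa R_P = \sqrt{\Ann(N_P)}$) and that $\Supp(\ol M_P) = \Supp(N_P)$ whenever $\Supp(M_P) \supseteq \Supp(N_P)$. As in the proof of Lemma~\ref{lem:Length2}, the associativity formula then gives
$$c_P \cdot e(I, \ol M_P) \ \ge \ e(I, N_P), \qquad c_P := \max_{Q' \in \Min(N_P)} l_{(R_P)_{Q'}}(N_{P,Q'}).$$
Since every $Q' \in \Min(N_P)$ has the form $Q R_P$ for some $Q \in \Min(N)$ contained in $P$ (and then $l_{(R_P)_{Q'}}(N_{P,Q'}) = l_{R_Q}(N_Q)$), we obtain $c_P \le c := \max_{Q \in \Min(N)} l_{R_Q}(N_Q)$, a finite constant depending only on $N$. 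Combining Theorem~\ref{Thm:Vogel} at $P$ (with the global bound $n_0$) with Theorem~\ref{Thm:LechMod} applied to $\ol M_P$ over $R_P$, and using $l(\ol M_P/I\ol M_P) \le l(M_P/IM_P)$, yields
$$l(N_P/IN_P) \ \le \ n_0 \cdot c \cdot (\dim S)! \cdot e_0 \cdot l(M_P/IM_P),$$
where $e_0 := \sup_{P \in \Spec \ol R} e(\ol R_P) < \infty$ by hypothesis. Thus one takes $C_N := n_0 \cdot c \cdot (\dim S)! \cdot e_0$.

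The only point that requires care---and the main (minor) obstacle---is checking that each of the four factors arising in the local argument, namely $n(N_P)$, $c_P$, $\dim N_P$, and $e(\ol R_P)$, admits a global bound. Respectively, these are supplied by Remark~\ref{GlobalSV}, the finiteness of $\Min(N)$, the hypothesis $\dim S < \infty$, and the assumed supremum bound on $e(\ol R_P)$.
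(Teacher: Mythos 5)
Your argument is correct and is essentially the paper's own proof: localize Lemma~\ref{lem:Length2} at each $P\in\Supp(N)$ and observe that the four ingredients of the local constant $n(N_P)\,c_P\,\dim(N_P)!\,e(\ol R_P)$ are globally bounded via Remark~\ref{GlobalSV}, the finiteness of $\Min(N)$, $\dim(S)<\infty$, and the hypothesis on $\sup_P e(\ol R_P)$. The paper's proof of Lemma~\ref{lem:Length2-g} is exactly this one-line reduction, so no further comparison is needed.
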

\begin{proof}
For each $P$, do the same proof for $N_P$ over $R_P$ as in Lemma \ref{lem:Length2}, and note that the constant ${n(N_P)c\dim(N_P)!e(\overline R_P)}$
has a global upper bound (see Remark \ref{GlobalSV}): we can take $c = \max_{P \in \Min(N)}
l_{R_P}(N_P)$, which is enough for each local consideration.
\end{proof}


\begin{theorem}\label{thm:Length2-g}
Let $R$ be a Noetherian ring that is a homomorphic image of a Noetherian Gorenstein ring $S$ with $\dim (S)<\infty$. Suppose $R$ is locally equidimensional and satisfies the uniform Artin-Rees property and that $\sup\{e(\ol R_P) \mid P \in \Spec(\ol R)\} < \infty$. Suppose the residue field of $R_P$ is infinite for all $P\in\Supp(R)$. Then, for all finitely generated $R$-modules $M$ and $N$, we have
\begin{enumerate}
\item $
\sup_{\sqrt{I} =P_P}\left\{\frac{l(N_P/IN_P)}{l(M_P/IM_P)}
\right\} < \infty
\text{ for all } P \in \Supp(N)
\iff \Supp(M) \supseteq \Supp(N)$.
\item There exists a constant $C>0$ depending only on $N$ such that
\[
1/C \le \inf_{\sqrt{I} =P_P}\left\{\frac{l(M_P/IM_P)}{l(N_P/IN_P)} \right\}
\quad \text{and, equivalently,} \quad
\sup_{\sqrt{I} =P_P}\left\{\frac{l(N_P/IN_P)}{l(M_P/IM_P)} \right\} \le C
\]
for all $P \in \Supp(N)$ and
all (finitely generated $R$-modules) $M$ with $\Supp(M_P) \supseteq \Supp(N_P)$.
\item $0 < \inf_{\sqrt{I} =P_P}\left\{\frac{l(M_P/IM_P)}{l(N_P/IN_P)} \right\}
\le \sup_{\sqrt{I} =P_P}\left\{ \frac{l(M_P/IM_P)}{l(N_P/IN_P)} \right\} < \infty
\iff \Supp(M) = \Supp(N)$.
\end{enumerate}
\end{theorem}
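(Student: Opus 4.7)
The plan is to mimic the proof of Theorem~\ref{thm:Length2} step by step, replacing the completion trick with the global hypotheses (uniform Artin--Rees, boundedness of $e(\ol R_P)$, local equidimensionality, infinite residue fields) and producing constants that work uniformly across all $P \in \Supp(N)$.

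First I would dispose of parts (1) and (3) by reducing them to (2). Part (3) is immediate from (1) applied to both $(M,N)$ and $(N,M)$. The nontrivial forward direction of (1) -- that a finite sup at every $P \in \Supp(N)$ forces $\Supp(M) \supseteq \Supp(N)$ -- is a purely pointwise argument, essentially identical to the local case: fix $P \in \Supp(N)$, localize at $P$, pass to $\ol M_P := M_P/PM_P$ and $\ol N_P := N_P/PN_P$, and take $I = (P_P)^t$ for $t \gg 0$ in the assumed finite sup to force $\dim(\ol M_P) \ge \dim(\ol N_P) = \dim(R_P/PR_P)$, so $P \in \Supp(M)$.

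For (2), I would take a prime cyclic filtration $0 = N_0 \subset N_1 \subset \dotsb \subset N_n = N$ with $N_i/N_{i-1} \cong R/P_i$ and $P_i \in \Supp(N)$, and apply Lemma~\ref{lem:Length2-g} to each factor. Each $R/P_i$ is a domain, hence locally equidimensional, and the remaining hypotheses descend to $R/P_i$: the uniform Artin--Rees property is inherited by quotients in each of the settings indicated at the start of the appendix, and $\sup\{e((R/P_i)_Q)\}$ is controlled by $\sup\{e(\ol R_Q)\}$ via the associativity formula for multiplicities. For every $P \in V(P_i) \subseteq \Supp(N) \subseteq \Supp(M)$, Lemma~\ref{lem:Length2-g} (applied with $R/P_i$ in the role of $N$) yields a constant $C_i$ depending only on $R/P_i$ with
\[
\sup_{\sqrt{I}=P_P}\left\{\frac{l((R/P_i)_P/I(R/P_i)_P)}{l(M_P/IM_P)}\right\} \le C_i.
\]
Summing the filtration inequalities $l(N_P/IN_P) \le \sum_i l((R/P_i)_P/I(R/P_i)_P)$ over $i$ produces $C := \sum_{i=1}^n C_i$, depending only on $N$, as the global bound claimed in (2).

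The main obstacle will be verifying the descent of the uniform hypotheses to each factor $R/P_i$ in a way that allows the finitely many constants $C_i$ to combine into one constant valid for all $P \in \Supp(N)$ simultaneously. Concretely, I need to confirm that uniform Artin--Rees and the bound on $e(\ol R_P)$ transfer to quotients $R/P_i$ with constants controlled by those of $R$, so that a single Lemma~\ref{lem:Length2-g} call per filtration factor suffices. Once this bookkeeping is in place, the rest is a faithful globalization of the local argument of Theorem~\ref{thm:Length2}, with the prime cyclic filtration doing exactly the same work globally that it did in the local setting and the pointwise localization at $P$ handling the passage between local and global statements.
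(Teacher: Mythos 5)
Your proposal follows the paper's proof: the paper's own argument is literally ``for each $P$, do the same proof as in Theorem~\ref{thm:Length2}, replacing Lemma~\ref{lem:Length2} by Lemma~\ref{lem:Length2-g}'', i.e., parts (1) and (3) are handled pointwise exactly as in the local case, and part (2) comes from a prime cyclic filtration of $N$ with factors $R/P_i$, one application of Lemma~\ref{lem:Length2-g} per factor, and summing the resulting constants $C_i$. The one place where your added bookkeeping is off is the claim that $\sup_Q\{e((R/P_i)_Q)\}$ is ``controlled by $\sup_Q\{e(R_Q)\}$ via the associativity formula'': that mechanism only works when $P_i$ is a \emph{minimal} prime of $R$ (where local equidimensionality gives $e((R/P_i)_Q)\le e(R_Q)$); for non-minimal $P_i\in\Supp(N)$, which of course occur in a prime cyclic filtration of a general $N$, associativity yields no such bound (already for $R$ regular one has $e(R_Q)=1$ while a hypersurface quotient can have arbitrarily large local multiplicity, and for non-excellent $R$ the sup over $Q$ can genuinely be infinite). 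The finiteness of $\sup_Q\{e((R/P_i)_Q)\}$ for quotient domains is exactly what the appendix preamble supplies via openness of the regular loci of all quotient domains (e.g., excellence), and this is how the hypotheses of Lemma~\ref{lem:Length2-g} are meant to be read; the paper itself does not verify this transfer either. With that reading, your argument -- including the uniform Artin--Rees descent to quotients, which does hold with the same constant in the settings listed at the start of the appendix -- is the same as the paper's and goes through.
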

\begin{proof}
For each $P$, do the same proof as in Theorem \ref{thm:Length2}, replacing Lemma \ref{lem:Length2} by Lemma \ref{lem:Length2-g}.
\end{proof}

The next lemma is the same as the local version; we record it here only to preserve the numbering.

\begin{lemma}
\label{ses-g}
Let $(R,\fm)$ be a Noetherian local ring and $M$ a finitely generated $R$-module.
If $(y_1, \ldots, y_d) \subseteq (x_1, \ldots, x_k)$ are $\fm$-primary
ideals of $R$, then for all $0 \leq i \leq k$,
\[
l(H_i(x_1, \ldots, x_k;M))
\leq \sum_{j=0}^k {k \choose j} l(H_{i-j}(y_1, \ldots, y_d;M))
\le 2^k \max_{0 \le j \le k}  l(H_{i-j}(y_1, \ldots, y_d;M)),
\]
with the convention that $H_{<0}(y_1, \ldots, y_d; M)=0$.
\end{lemma}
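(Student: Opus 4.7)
The plan is to reproduce the proof of Lemma \ref{ses} verbatim. Indeed, the authors explicitly note that Lemma \ref{ses-g} is ``the same as the local version'' and is recorded only to preserve the numbering; the statement involves no global or uniform hypotheses and relies purely on the standard Koszul short exact sequence. First I would recall the key tool: for any finite sequence $\underline{f} = f_1, \ldots, f_s$ in $R$ with $\underline{f}^- = f_1, \ldots, f_{s-1}$, there is a natural short exact sequence
\[
0 \to H_i(\underline{f}^-; M)/f_s H_i(\underline{f}^-; M) \to H_i(\underline{f}; M) \to \Ann_{H_{i-1}(\underline{f}^-; M)}(f_s) \to 0
\]
for every $i$, so in particular $l(H_i(\underline{f};M)) \le l(H_i(\underline{f}^-;M)) + l(H_{i-1}(\underline{f}^-;M))$.

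The main trick is to first enlarge the sequence $(x_1,\ldots,x_k)$ by appending $y_1,\ldots,y_d$ one at a time. Since every $y_j$ lies in $(x_1,\ldots,x_k) \subseteq (x_1,\ldots,x_k,y_1,\ldots,y_{j-1})$, it annihilates every Koszul homology module $H_i(x_1,\ldots,x_k,y_1,\ldots,y_{j-1};M)$. Hence for each $j$ the short exact sequence collapses to the identity
\[
l(H_i(x_1,\ldots,x_k,y_1,\ldots,y_j;M)) = l(H_i(x_1,\ldots,x_k,y_1,\ldots,y_{j-1};M)) + l(H_{i-1}(x_1,\ldots,x_k,y_1,\ldots,y_{j-1};M)),
\]
and in particular $l(H_i(x_1,\ldots,x_k;M)) \le l(H_i(x_1,\ldots,x_k,y_1,\ldots,y_d;M))$.

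Next I would peel off the variables $x_k, x_{k-1},\ldots, x_1$ in reverse order using the same short exact sequence, now only as an inequality: each removal replaces a length with the sum of two adjacent Koszul homology lengths. After $k$ such steps (tracking the Pascal recursion $\binom{k}{j} = \binom{k-1}{j} + \binom{k-1}{j-1}$) the accumulated coefficients give
\[
l(H_i(x_1,\ldots,x_k,y_1,\ldots,y_d;M)) \le \sum_{j=0}^{k}\binom{k}{j} l(H_{i-j}(y_1,\ldots,y_d;M)),
\]
with the convention $H_{<0}=0$. Combining this with the previous paragraph yields the first inequality, and the second follows from $\sum_{j=0}^{k}\binom{k}{j} = 2^k$. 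There is no real obstacle; the argument is self-contained and identical to that of Lemma \ref{ses}.
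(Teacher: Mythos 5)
Your proposal is correct and follows essentially the same argument as the paper's proof of Lemma \ref{ses} (which, as the paper notes, is exactly the proof of Lemma \ref{ses-g}): append $y_1,\dots,y_d$ using that each $y_j$ lies in the ideal of the previous sequence so the Koszul short exact sequence gives length additivity, then peel off $x_k,\dots,x_1$ via the same sequence as an inequality, accumulating the binomial coefficients. No gaps; the final bound $\sum_{j}\binom{k}{j}\le 2^k\max_j$ is handled as in the paper.
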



The next remark follows immediately from the local version.

\begin{remark}[\cite{Schenzel98} or \cite{BHM18}]
\label{sseq-g}
Let $R$ be a Noetherian ring that is a homomorphic image of a Noetherian Gorenstein ring $S$. Then, for every
finitely generated $R$-module $M$, every $P \in \Spec(R)$, every
system of parameters $\underline{x} = x_1, \dotsc, x_{\dim(R_P)}$ of
$R_P$, and every $i = 1, \dotsc, \dim(R_P)$, we have
\[
l(H_i(x_1, \ldots, x_{\dim(R_P)};M_P)) \leq
\sum_{j = 0}^{\dim(R_P)-i} l(H_{\dim(R_P)-i-j}(x_1, \ldots, x_{\dim(R_P)};\Ext_{S_P}^{\dim(S_P)-j}(M_P,S_P))).
\]
Note that $\dim((\Ext_{S}^{\dim(S_P)-j}(M,S))_P) \le \dim(R_P)-i$ for
each $j = 0, \dotsc, \dim(R_P)-i$. Here, by abuse of notation, $S_P$
stands for $S_Q$ where $Q$ is the pre-image of $P$ under the onto map $S \to R$.
\end{remark}

\begin{theorem} \label{Thm:HomoBound-g}
Let notation and assumptions be as in Lemma \ref{lem:Length-g}.
Then there exists $C$ depending on $M$ such
that, for every $P \in \Supp(M)$ and every $k \geq \dim(M_P)$, we have
\[
\sup_{\substack{\sqrt{(x_1,\ldots, x_{k}) + \Ann(M_P)}=P_P\\ 0 \leq i \leq k}}
\left\{\frac{l(H_i(x_1, \ldots, x_{k};M_P))}
{l(M_P/(x_1,\ldots,x_{k})M_P)} \right\} \le 2^kC.
\]
By $\sqrt{(x_1,\ldots, x_{k}) + \Ann(M)_P}=P_P$, the understanding is
that $x_i \in P_P$.
\end{theorem}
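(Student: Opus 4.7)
The plan is to run the proof of the local Theorem~\ref{Thm:HomoBound} simultaneously at every $P\in\Supp(M)$ and verify that the resulting constant can be chosen independently of $P$. Each $R_P$ is a homomorphic image of the Gorenstein localization $S_Q$ (with $Q\subset S$ the preimage of $P$), so the local machinery applies verbatim at each $P$. As in the local argument, first replace $R$ by $R/\Ann(M)$: this does not affect either side of the displayed inequality and preserves all hypotheses. I would then induct on $d = \sup_{P \in \Supp(M)} \dim(M_P)$, which is bounded by $\dim(S)$. The base case $d = 0$ forces $R$ itself to have Krull dimension $0$, hence to be artinian with finitely many primes, so $M$ has finite length and $C = l(M)$ suffices.

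For the inductive step, fix a finite prime cyclic filtration $0 = M_0 \subset \cdots \subset M_r = M$ with factors $R/P_1, \dotsc, R/P_r$; it localizes at every $P$ (with trivial factors when $P_j \not\subseteq P$). Subadditivity of lengths of Koszul homology, combined with Theorem~\ref{thm:Length2-g} (applied to $R/P_j$ in place of $N$, noting that $V(P_j) \subseteq \Supp(M)$ forces $\Supp(M_P) \supseteq \Supp((R/P_j)_P)$ whenever $P_j\subseteq P$), lets one replace $l(M_P/((\ux)+P_j)M_P)$ by $l((R/P_j)_P/(\ux)(R/P_j)_P)$ up to a constant depending only on $M$. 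This reduces the problem to the case $M = R/P_j$ for each of the finitely many $P_j$, i.e., to the case where $R$ is locally a domain at each $P \in \Supp(M)$. Now, at each such $P$ pick a minimal reduction $\uy = y_1,\dotsc,y_{d_P}$ of $(\ux)R_P$, possible since $R_P$ has infinite residue field. By Lemma~\ref{ses-g} the Koszul homology of $(\ux)$ is bounded by $2^k$ times the maximum of the Koszul homologies of $\uy$, and by Corollary~\ref{cor.integralclosure-g} the colengths in the denominator change by a factor uniform in $P$. It therefore suffices to produce a uniform bound on $l(H_i(\uy; R_P))/l(R_P/(\uy))$ over all $P$ and all systems of parameters $\uy$ of $R_P$.

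Finally, Remark~\ref{sseq-g} bounds this numerator by lengths of Koszul homologies of the globally fixed finitely generated $R$-modules $\Ext_S^{\dim(S)-j}(R,S)$, whose localizations have dimension strictly less than $d_P$ (and hence strictly less than $d$) at every $P$ in their support. The elementary inequality $l(L_P/(\uy)L_P) \le \mu(L_P)\cdot l(R_P/(\uy)) \le \mu(L)\cdot l(R_P/(\uy))$, using that localization never increases the minimal number of generators, reduces matters to the inductive hypothesis applied to each such Ext module. The main obstacle, and the real content of the appendix, is to verify that each constant that appears along the way --- the uniform bound on $n(M_P)$ from Remark~\ref{GlobalSV}, the factor $d!\,e(R_P)$ from Theorem~\ref{Thm:LechMod}, the uniform Artin--Rees constant implicit in Theorem~\ref{thm:Length2-g}, and $\mu(L)$ --- can indeed be chosen independently of $P$; these uniform bounds are packaged precisely by Lemma~\ref{lem:Length-g}, Theorem~\ref{thm:Length2-g}, and Corollary~\ref{cor.integralclosure-g}, so once they are invoked the induction closes exactly as in the local proof.
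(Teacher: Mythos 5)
Your skeleton follows the paper's proof of Theorem \ref{Thm:HomoBound-g} almost step for step: kill $\Ann(M)$, use a globally fixed prime cyclic filtration together with Theorem \ref{thm:Length2-g} to reduce to $M=R$ a domain, pass to minimal reductions via Lemma \ref{ses-g} and Corollary \ref{cor.integralclosure-g}, then use Remark \ref{sseq-g} and the inequality $l(L_P/(\uy)L_P)\le\mu(L)\,l(R_P/(\uy))$ to feed the Ext modules into an induction. The one point where you genuinely diverge is the induction variable, and that is where there is a gap. You induct on the global quantity $d=\sup_{P\in\Supp(M)}\dim(M_P)$, so to invoke your inductive hypothesis for an Ext module $L=\Ext_S^{m}(R,S)$ you need $\dim(L_Q)<d$ at \emph{every} $Q\in\Supp(L)$. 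You justify this by appealing to Remark \ref{sseq-g} (``whose localizations have dimension strictly less than $d_P$ \dots at every $P$ in their support''), but that is not what the remark gives: it bounds $\dim\bigl((\Ext_S^{\dim(S_P)-j}(M,S))_P\bigr)\le\dim(R_P)-i$ only at the prime $P$ currently under consideration, and only for the indices $\dim(S_P)-j$, which depend on $P$ (your indexing $\Ext_S^{\dim(S)-j}(R,S)$ with the global $\dim(S)$ is a symptom of overlooking this). At an arbitrary $Q\in\Supp(L)$ the remark says nothing, and a fixed $\Ext_S^{m}(R,S)$ can a priori have dimension equal to $d$ at a prime of large height; indeed $\Ext_S^{h}(R,S)$ with $h=\Ht(\ker(S\to R))$ typically does.

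So as written your induction does not close. It can be repaired: for $i\ge 1$ (the case $i=0$ is trivial after the reduction) the indices occurring at any $P$ satisfy $m\ge\dim(S_P)-\dim(R_P)+i=h+i>h$, because $S$ is Cohen--Macaulay and hence $\dim(S_Q)-\dim(R_Q)=h$ for every $Q$; then $\dim\bigl(\Ext_S^{m}(R,S)_Q\bigr)\le\dim(S_Q)-m\le\dim(R_Q)-i<d$ for all $Q$, which is the global dimension drop you need---but this argument must be supplied, since it does not follow from the cited remark. The paper sidesteps the issue by stratifying the induction: its claim, for each $d$, produces a constant $C_{M,d}$ valid only at primes $P$ with $\dim(M_P)=d$ (taking $C=\max_d C_{M,d}$ at the end), so the inductive hypothesis for $L$ is used only at the primes $P$ where $\dim(L_P)<d$, which is exactly the information Remark \ref{sseq-g} provides. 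Either fix is fine, but one of them has to be made explicit; everything before the Ext step in your proposal is correct and agrees with the paper.
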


\begin{proof}
We use induction on $d$ to prove the following claim: \emph{For every
  $d \ge 0$, there exists $C_{M,d}$ depending on $M$ and $d$ such
that, for every $P \in \Supp(M)$ with $\dim(M_P) = d$, every $k
\geq d$, we have
\[
\sup_{\substack{\sqrt{(x_1,\ldots, x_{k}) + \Ann(M_P)}=P_P\\ 0 \leq i \leq k}}
\left\{\frac{l(H_i(x_1, \ldots, x_{k};M_P))}
{l(M_P/(x_1,\ldots,x_{k})M_P)} \right\} \le 2^kC_{M,d}.
\]
}%
As $\dim(M) < \infty$, we can take $C = \max_{0 \le i \le
  \dim(M)}C_{i}$ to complete the proof of the theorem. We can replace $R$ by $R/\Ann(M)$, so $M$ is a faithful $R$-module. In
this sense, we are doing induction on the height of $P$.

When $d = 0$, $C_{M,0} = \max_{P \in \Min(M)} l_{R_P}(M_P)$ works.
Now let $d > 0$, and assume that the claim holds for $\le d-1$. 
We fix a prime cyclic filtration of $M$ over $R$ (which is independent
of the choice of $P$ at which we are localizing).
Localizing this prime cyclic filtration of $M$ at $P$, applying a similar argument as in the proof of
Theorem~\ref{Thm:HomoBound} in light of Lemma~\ref{lem:Length2-g} or
Theorem~\ref{thm:Length2-g}, we may further assume that $M=R$ and that
$R$ is a domain.

Now we prove the claim for $d$. By
Corollary~\ref{cor.integralclosure-g} and Lemma~\ref{ses-g}, it suffices
to find a constant $D$ such that
\[
\frac{l(H_i(y_1, \ldots, y_{d};R_P))}
{l(R_P/(y_1,\ldots,y_{d}))} \le D
\]
for all $P \in \Spec(R)$ such that $\dim(R_P) = d$, all systems of
parameters $\uy:= y_1, \dotsc, y_d$ of $R_P$, and all
$i = 0, \dotsc, d$. Now by Remark \ref{sseq-g}, it suffices to show that, for any
fixed finitely generated $R$-module $L$ satisfying $\dim(L_P) <
d$ whenever $\dim(R_P) = d$ (think of $L$ as one of those (finitely
many) global $\Ext$ modules over $R$), there exists a
constant $D_L$ such that
\[
\frac{l(H_i(y_1, \ldots, y_{d};L_P))}
{l(R_P/(y_1,\ldots,y_{d}))} \le D_L
\]
independent of $\uy$, $i$, and $P \in \Spec(R)$ such that $\dim(R_P) = d$. Indeed, as $\l(L_P/(y_1,\ldots,y_{d})L_P) \le \mu_R(L)l(R_P/(y_1,\ldots,y_{d}))$, we have
\[
\frac{l(H_i(y_1, \ldots, y_{d};L_P))}
{l(R_P/(y_1,\ldots,y_{d}))}
\le \mu(L) \frac{l(H_i(y_1, \ldots, y_{d};L_P))}
{l(L_P/(y_1,\ldots,y_{d})L_P)}.
\]
Since $\dim(L_P) < d$ when $\dim(R_P) = d$, the right hand side of the
above inequality is bounded above (independent of $P \in \Spec(R)$
such that $\dim(R_P) = d$, $\uy$ and $i$) by the inductive
hypothesis (noting that $2^d$ is a constant as well).
\end{proof}


\begin{theorem}\label{Uniform0-g}
Let notation and assumptions be as in Lemma \ref{lem:Length-g}. Then for every $\varepsilon>0$, there exists $t_0$ such that, for all $t \geq
t_0$, all $P \in \Supp(M)$, all systems of parameters $\ux:= x_1, \ldots, x_{\dim(M_P)}$ of
$M_P$, and all $1 \leq i \leq \dim(M_P)$,
\[
\frac{l(H_i(x_1^t, \ldots, x_{\dim(M_P)}^t; M_P))}{l(M_P/(x_1^t, \ldots, x_{\dim(M_P)}^t)M_P)}
<\varepsilon.
\]
In fact, there exists a constant $K$ such that for all $t \geq 1$,
all $P \in \Supp(M)$, all systems of parameters $\ux:= x_1, \ldots, x_{\dim(M_P)}$ of
$M_P$, and all $1 \leq i \leq \dim(M_P)$,
\[
\frac{l(H_i(x_1^t, \ldots, x_{\dim(M_P)}^t; M_P))}{l(M_P/(x_1^t, \ldots, x_{\dim(M_P)}^t)M_P)}
\le \frac K{t^i}.
\]
\end{theorem}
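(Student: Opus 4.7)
The plan is to run the argument of Theorem~\ref{Uniform0} essentially verbatim, after confirming that every constant that appears in that proof can be chosen independently of $P \in \Supp(M)$. Replacing $R$ by $\overline{R} = R/\Ann(M)$, we may assume $M$ is faithful; the global hypotheses pass to $R$, so $\sup_P e(R_P) < \infty$ and $R$ satisfies uniform Artin--Rees. Since $\dim(M_P) \le \dim(R) < \infty$, only finitely many values of $d_P := \dim(M_P)$ and $i$ arise, so it suffices to produce, for each fixed $i$, a uniform bound on the ratio. Localizing at $P$ and applying Remark~\ref{sseq-g} bounds the numerator by a sum of Koszul homology lengths of the (finitely many) global modules $L = \Ext_S^{\dim(S)-j}(M,S)$ for $0 \le j \le d_P - i$, and a single \emph{global} prime cyclic filtration of each such $L$ reduces the task to bounding
\[
\frac{l(H_j(x_1^t,\ldots,x_{d_P}^t;\, (R/P')_P))}{l(M_P/(x_1^t,\ldots,x_{d_P}^t)M_P)}
\]
uniformly in $P$, $P' \in \Spec(R)$ with $d' := \dim((R/P')_P) \le d_P - i$, and the chosen system of parameters.

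With this reduction in place, I would chain four uniform estimates, each already available in the global form. Theorem~\ref{Thm:HomoBound-g} gives a constant $C$ independent of $P$ bounding the numerator by $C\cdot l((R/P')_P/(x_1^t,\ldots,x_{d_P}^t)(R/P')_P)$. Theorem~\ref{Thm:Vogel} together with Remark~\ref{GlobalSV} supplies a global constant $B$ with $l((R/P')_P/(x_1^t,\ldots,x_{d_P}^t)(R/P')_P) \le B\cdot e((x_1^t,\ldots,x_{d_P}^t),(R/P')_P)$. The equidimensionality of $R$ furnishes a minimal prime $Q \subseteq P'$ of $R_P$, and the global analogue of Equation~\eqref{eqn:supR/P} supplies a constant $A$ with $e(I,(R/P')_P) \le A\cdot e(I, R_P/Q)$ for every $P_P$-primary ideal $I$; finally, $e(I, R_P/Q) \le e(I, M_P)$ by associativity of multiplicities, since $\dim(R_P/Q) = \dim(M_P)$ by equidimensionality. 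The geometric factor $t^{-i}$ then arises from the standard scaling $e((x_1^t,\ldots,x_{d_P}^t), N) = t^{\dim N}\cdot e((x_1,\ldots,x_{d_P}), N)$ applied to $N = (R/P')_P$ in the numerator and $N = M_P$ in the denominator, which contributes a factor of $t^{d' - d_P} \le t^{-i}$.

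The main obstacle is verifying the global analogue of Equation~\eqref{eqn:supR/P}, whose local proof inside Theorem~\ref{Thm:Vogel} runs an induction on dimension and invokes Lemma~\ref{lem:Huneke} at each step. What is needed is a constant $k$ such that, for every $P \in \Spec(R)$, every nonzerodivisor $x \in PR_P$ on $R_P$, and every $PR_P$-primary ideal $I$, one has $e(I, R_P/(x)) \le k\cdot e(I, R_P)$; this is exactly where the uniform Artin--Rees hypothesis enters, since the uniform constant produced by \cite[Theorem~4.12]{Hun92} for $R$ works simultaneously at every localization. Combined with the global bound on $e(R_P)$, the inductive step of Theorem~\ref{Thm:Vogel} propagates to all $P$ uniformly and yields a global constant $A$. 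Once this is in place, composing $A$, $B$, $C$ with the $t^{-i}$ factor gives the required constant $K$, which proves both assertions at once (the first follows from the second by choosing $t_0$ so that $K/t^i < \varepsilon$ for all $t \ge t_0$).
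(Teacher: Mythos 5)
Your proposal is correct and follows essentially the same route as the paper's proof: reduce via Remark~\ref{sseq-g} and global prime cyclic filtrations of the finitely many modules $\Ext_S^{j}(M,S)$ to cyclic factors $R/P'$, then chain the uniform constants from Theorem~\ref{Thm:HomoBound-g}, the global St\"uckrad--Vogel bound (Remark~\ref{GlobalSV}), and the global version of Equation~\eqref{eqn:supR/P} (obtained via uniform Artin--Rees as in Lemma~\ref{lem:Huneke}), together with the scaling $e((x_1^t,\ldots,x_d^t),N)=t^{\dim N}e((x_1,\ldots,x_d),N)$ and $e(I,R_P/Q)\le e(I,M_P)$ from local equidimensionality. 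Your identification of the global analogue of \eqref{eqn:supR/P} as the key point needing the localization-uniform Artin--Rees constant is exactly the paper's (parenthetical) justification.
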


\begin{proof}
As usual, we replace $R$ by $\overline{R}=R/\Ann(M)$ to assume that
$M$ is faithful over $R$. Our hypothesis then implies that both $M$
and $R$ are locally equidimensional. 

Denote $\mathcal L = \{\Ext_S^j(M,S) \mid 0 \le j \le \dim(S)\}$, which
is a finite set. For each $L \in \mathcal L$, fix a prime cyclic
filtration $F_L$ of $L$, and denote
$\mathcal D = \{\text{all the $R/\fa$ appearing as a factor of $F_L$
  for some $L \in \mathcal L$}\}$, which is finite.

Fix $\varepsilon > 0$. Because we consider only finitely many $i$, it
is sufficient to fix some $1 \leq i \leq \dim(R)$.
By Remark \ref{sseq-g}, it suffices to show that there
exists a constant $K$ such that, for all $P \in \Supp(M)$, for all $L \in
\mathcal L$ satisfying $\dim(L_P) \le \dim(R_P)-i$, for all $t \geq 1$,
for all systems of parameters $\ux$ of $R_P$,
and for all $j = 0, \dotsc, \dim(R_P)-1$,
\[
\dfrac{l(H_j(x_1^t, \ldots, x_{\dim(R_P)}^t; L_P))}{l(M_P/(x_1^t, \ldots, x_{\dim(R_P)}^t)M_P)}
\le \frac K{t^i}.
\]
Then, via $F_L$, it suffices to show that there
exists a constant $K$ such that, for all $P \in \Supp(M)$, for all $D \in
\mathcal L$ satisfying $\dim(D_P) \le \dim(R_P)-i$, for all $t \geq 1$,
for all systems of parameters $\ux$ of $R_P$,
and for all $j = 0, \dotsc, \dim(R_P)-1$,
\[
\dfrac{l(H_j(x_1^t, \ldots, x_{\dim(R_P)}^t; D_P))}{l(M_P/(x_1^t, \ldots, x_{\dim(R_P)}^t)M_P)}
\le \frac K{t^i}.
\tag{$\dagger$} \label{dagger}
\]
By Theorem~\ref{Thm:HomoBound-g} and noting that $\mathcal D$ is
finite, we fix
\[
C = \sup_{\substack{D \in \mathcal D, \, P \in \Supp(D) \\ \sqrt{(x_1,\ldots, x_d)}=P_P}}
\left\{\frac{l(H_j(x_1, \ldots, x_d;D_P))}{l(D_P/(x_1,\ldots,x_d)D_P)}
\right\} <\infty.
\]
According to Remark \ref{GlobalSV} (i.e., the global version of Theorem~\ref{Thm:Vogel}), we let
\[
B  = \sup_{\substack{D \in \mathcal D, \, P \in \Supp(D)\\ \sqrt{I}=P_P}}\left\{\frac{l(D_P/ID_P)}{e(I,D_P)} \right\}<\infty.
\]
Moreover, for every $D = R/\fa \in \mathcal D$, fix $Q(D) \in \Min(R) =
\Min(M)$ such that $Q(D) \subseteq \fa$, so that $D = R/\fa$ is a
homomorphic image of $R/Q(D)$. Note that $e(I, R_P/Q(D)_P) \le e(I,
M_P)$ for all $P_P$-primary ideals $I$ of $R_P$ because $R$ is locally equidimensional (remember we already reduced to the case $R=R/\Ann(M)$). In light of the global version of Equation~\eqref{eqn:supR/P} (since we assume that $R/\Ann(M)$ satisfies the uniform Artin-Rees property, this equation holds by the same argument as in Lemma \ref{lem:Huneke}), we set
\[
A = \sup_{\substack{D \in \mathcal D, \, P \in \Supp(D)\\ \sqrt{I}=P_P}}\left\{\frac{e(I, D_P)}{e(I, (R/Q(D))_P)}\right\}<\infty.
\]

At this point we see that, for all $P \in \Supp(M)$, for all $D \in
\mathcal L$ satisfying $\dim(D_P)=d' \le d - i= \dim(R_P)-i =
\dim((R/Q(D))_P)-i$, for all $t \ge 1$, for all systems of parameters
$\ux$ of $R_P$, and for all $j = 0, \dotsc, \dim(R_P)-1$,
we have (denoting $d:= \dim((R/Q(D))_P)$)
\begin{align*}
\frac{l(H_j(x_1^t, \dotsc, x_d^t;D_P))}{l(M/(x_1^t, \dotsc, x_d^t)M_P)}
&\leq \frac{l(H_j(x_1^t, \dotsc, x_d^t;D_P))}{e((x_1^t, \dotsc, x_d^t),M_P)} \\
&\leq C \frac{l(D_P/(x_1^t, \dotsc, x_d^t)D_P)}{e((x_1^t, \dotsc, x_d^t),M_P)} \\
&\leq BC \frac{e((x_1^t, \dotsc, x_d^t),D_P)}{e((x_1^t, \dotsc, x_d^t),M_P)} \\
& = BC \frac{t^{d'}e((x_1, \dotsc, x_d),D_P)}{t^de((x_1, \dotsc, x_d),M_P)} \\
& \leq \frac {BC}{t^{d-d'}}
\frac{e((x_1, \dotsc, x_d),D_P)}{e((x_1, \dotsc, x_d),(R/Q(D))_P)}
\leq \frac {ABC}{t^{d-d'}} \le \frac {ABC}{t^i} ,
\end{align*}
whose convergence to $0$, as $t \to \infty$, gives what we need in
\eqref{dagger} to complete the proof.
\end{proof}


Similarly, we can generalize Theorem~\ref{Uniform0-g} as follows:

\begin{theorem}\label{Uniform0-ti-g}
Let notation and assumptions be as in Lemma \ref{lem:Length-g}. Then
there exists a constant $K$ such that, for all $P
\in \Supp(M)$, all $t_j \geq 1$ with $1 \le j \le \dim(M_P)$, all
systems of parameters $\ux:= x_1, \ldots, x_{\dim(M_P)}$ of
$M_P$, and all $1 \leq i \leq \dim(M_P)$,
\[
\frac{l(H_i(x_1^{t_1}, \ldots, x_{\dim(M_P)}^{t_{\dim(M_P)}}; M_P))}
{l(M_P/(x_1^{t_1}, \ldots, x_{\dim(M_P)}^{t_{\dim(M_P)}})M_P)}
\le \frac K{(\min_j t_j)^i}.
\]
\end{theorem}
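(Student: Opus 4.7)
The plan is to follow the proof of the local version Theorem~\ref{Uniform0-ti} essentially verbatim, localizing at each $P \in \Supp(M)$ and making sure that every constant remains uniform over $P$. After replacing $R$ by $\overline R = R/\Ann(M)$, we may assume $M$ is faithful, so $d_P := \dim(M_P) = \dim(R_P)$ and $R$ is locally equidimensional. Fix $P \in \Supp(M)$, a system of parameters $\ux = x_1,\dotsc,x_{d_P}$ of $M_P$, exponents $t_j \ge 1$, some $1 \le i \le d_P$, and set $t = \min_j t_j$.

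First I would apply Lemma~\ref{koszul-expo} (which is stated for an arbitrary Noetherian ring and so applies directly to $R_P$) with $s_j = t$, obtaining
\[
l(H_i(x_1^{t_1},\dotsc,x_{d_P}^{t_{d_P}}; M_P)) \le l(H_i(x_1^{t},\dotsc,x_{d_P}^{t}; M_P)) \prod_{j=1}^{d_P}\frac{t_j}{t}.
\]
Next I would apply Theorem~\ref{Thm:LechMod} in $R_P$ to bound $e((x_1^{t_1},\dotsc,x_{d_P}^{t_{d_P}}), M_P)$ by $d_P!\, e(\overline R_P)\, l(M_P/(x_1^{t_1},\dotsc,x_{d_P}^{t_{d_P}})M_P)$. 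Using the identity $e((\ux^{[t]}), M_P) = (\prod_j t_j)\, e((\ux), M_P)$, the factors $t_j/t$ cancel in the combined inequality, reducing matters to bounding
\[
\frac{l(H_i(x_1^{t},\dotsc,x_{d_P}^{t}; M_P))}{e((x_1^{t},\dotsc,x_{d_P}^{t}), M_P)}
\]
uniformly by $K'/t^i$.

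To do that, I would invoke the uniform bound $n(M_P) \le N$ coming from Remark~\ref{GlobalSV} together with $\sup_{P}\, d_P!\, e(\overline R_P) < \infty$ (which follows from the hypothesis $\sup_P e(\overline R_P) < \infty$ and $\dim R \le \dim S < \infty$) to swap the multiplicity in the denominator for the length $l(M_P/(x_1^{t},\dotsc,x_{d_P}^{t})M_P)$ at the cost of a uniform multiplicative constant. The resulting ratio
\[
\frac{l(H_i(x_1^{t},\dotsc,x_{d_P}^{t}; M_P))}{l(M_P/(x_1^{t},\dotsc,x_{d_P}^{t})M_P)}
\]
is then bounded by $K''/t^i$ uniformly in $P$ and $\ux$ by Theorem~\ref{Uniform0-g}. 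Concatenating the three estimates produces a single constant $K$ depending only on $M$ with $l(H_i(\ux^{[t]};M_P))/l(M_P/(\ux^{[t]})M_P) \le K/(\min_j t_j)^i$, as required.

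The main obstacle is purely organizational: one has to check that every constant appearing along the way ($d_P!\, e(\overline R_P)$, $n(M_P)$, and the constant from Theorem~\ref{Uniform0-g}) admits a global bound independent of $P \in \Supp(M)$. These uniform bounds are already in hand from the standing hypotheses of Lemma~\ref{lem:Length-g}, from Remark~\ref{GlobalSV}, and from Theorem~\ref{Uniform0-g} respectively, so no genuinely new global estimate needs to be proved; the argument is simply the local proof of Theorem~\ref{Uniform0-ti} applied at each $R_P$ with these uniform replacements made.
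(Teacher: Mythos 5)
Your proposal is correct and follows exactly the paper's argument: the paper proves this theorem by carrying out the proof of Theorem~\ref{Uniform0-ti} at each $M_P$ (Lemma~\ref{koszul-expo}, Theorem~\ref{Thm:LechMod}, the multiplicativity of $e$ under powers, the bound on $n(M_P)$, and finally Theorem~\ref{Uniform0-g}), noting that $\dim(M_P)$, $e(\overline{R}_P)$, and $n(M_P)$ admit global bounds via the hypotheses of Lemma~\ref{lem:Length-g} and Remark~\ref{GlobalSV}. Your write-up simply makes explicit the same chain of estimates and the same uniformity checks.
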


\begin{proof}
We carry out the same proof as in Theorem~\ref{Uniform0-ti} for each $M_P$, using the
fact that $\dim(M_P)$, $e(\ol R_P)$, and $n(M_P)$ have global upper bounds for
all $P \in \Supp(M)$ (see Remark~\ref{GlobalSV} or Remark~\ref{SVbyInduction-g}).
\end{proof}

\begin{remark}\label{Uniform0SV-g}
We would like to mention that, in Theorem~\ref{Uniform0-g}, the
assumption that $M$ is locally equidimensional is necessary.
Localized at each $P$, it is the same argument as
Remark~\ref{Uniform0SV}.
Moreover, the conclusion of Theorem~\ref{Uniform0-g} implies the conclusion of Remark \ref{GlobalSV} (i.e., the global version of Theorem \ref{Thm:Vogel}) following the same argument as in Remark~\ref{Uniform0SV} applied to $M_P$ for all $P$.
\end{remark}

\begin{remark}\label{SVbyInduction-g}
Evidently the results of this section rely on the global version of Theorem \ref{Thm:Vogel} (i.e., Remark \ref{GlobalSV}). However, a careful analysis of the proofs in this section reveals an alternative proof of the global version of
Theorem~\ref{Thm:Vogel} by induction on dimension of the $R$-module $M$. We use notation as in Lemma \ref{lem:Length-g}. When $\dim(M) = 0$, it is clear that
Theorem~\ref{Uniform0-g},
Remark~\ref{GlobalSV},
Lemma~\ref{lem:Length-g},
Corollary~\ref{cor.integralclosure-g},
Lemma~\ref{lem:Length2-g} and
Theorem~\ref{Thm:HomoBound-g}
all hold. Now assume that \emph{all} these results hold
in dimension $<d$; and consider the case of dimension $d$. Then
Theorem~\ref{Uniform0-g} holds in dimension $d$ (because the proof of
Theorem~\ref{Uniform0-g} only requires the aforementioned results in
dimension $<d$), which implies
Remark~\ref{GlobalSV} in dimension $d$ as we explained in
Remark~\ref{Uniform0SV-g}. Then we have
Lemma~\ref{lem:Length-g},
Corollary~\ref{cor.integralclosure-g},
Lemma~\ref{lem:Length2-g} and
Theorem~\ref{Thm:HomoBound-g} in dimension $d$, completing the
induction.
\end{remark}

The following is an easy consequence of Theorem~\ref{Uniform0-ti-g}.

\begin{corollary}\label{uniform-convergence-g}
Let notation and assumptions be as in Lemma \ref{lem:Length-g}.
Then, for every constant $C >0$ and every $\epsilon > 0$, there exists
$t_0 \in \mathbb N$ such
that, for all $P \in \Supp(M)$, all $t_j \ge t_0$ with $1 \le j \le
\dim(M_P)$, and all systems of parameters $\ux = x_1, \dotsc, x_{\dim(M_P)}$ on
$M_P$ such that $e((\ux),M_P) \le C$, we have
\[
0 \le \frac{l(M_P/(x_1^{t_1}, \dotsc,x_{\dim(M_P)}^{t_{\dim(M_P)}})M_P)}{\prod_{j=1}^{\dim(M_P)}t_j} - e((\ux),M_P)
< \epsilon.
\]
In fact, there exists a constant $K$ such that
for all $P \in \Supp(M)$, all $t_j \ge 1$ with $1 \le j \le
\dim(M_P)$, and all systems of parameters $\ux = x_1, \dotsc, x_{\dim(M_P)}$ on
$M_P$, we have
\[
0 \le \frac{l(M_P/(x_1^{t_1}, \dotsc,x_{\dim(M_P)}^{t_{\dim(M_P)}})M_P)}{\prod_{j=1}^{\dim(M_P)}t_j} - e((\ux),M_P)
\le e((\ux),M_P) \frac K{\min_j t_j}
\le l(M_P/(\ux)M_P) \frac K{\min_j t_j} .
\]
\end{corollary}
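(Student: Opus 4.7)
The plan is to localize the argument of Corollary~\ref{uniform-convergence} at each $P \in \Supp(M)$ and then invoke the uniform estimates already assembled in the appendix. Under the standing hypotheses of Lemma~\ref{lem:Length-g}, Remark~\ref{GlobalSV} (equivalently, Remark~\ref{SVbyInduction-g}) guarantees that $n(M_P)$, $\dim(M_P)$, and $e(\ol R_P)$ all admit upper bounds independent of $P$, while Theorem~\ref{Uniform0-ti-g} supplies a Koszul-to-colength bound that is also uniform in $P$. These are the only ingredients the local corollary uses beyond routine identities, so its proof should go through globally without essential change.

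Concretely, I would begin from Serre's identity for parameter ideals: for any $P \in \Supp(M)$, any system of parameters $\ux = x_1,\dotsc,x_d$ of $M_P$ (with $d = \dim(M_P)$), and any exponents $t_j \ge 1$, writing $\ux^{[t]} := x_1^{t_1},\dotsc,x_d^{t_d}$, one has
\[
l(M_P/(\ux^{[t]})M_P) - e((\ux^{[t]}),M_P) = \sum_{i=1}^d (-1)^{i-1} l(H_i(\ux^{[t]}; M_P)) \ge 0.
\]
Using $e((\ux^{[t]}),M_P) = \bigl(\prod_j t_j\bigr)e((\ux),M_P)$ and dividing by $\prod_j t_j$ yields
\[
0 \le \frac{l(M_P/(\ux^{[t]})M_P)}{\prod_j t_j} - e((\ux),M_P) = \frac{l(M_P/(\ux^{[t]})M_P)}{\prod_j t_j} \cdot \sum_{i=1}^d (-1)^{i-1} \frac{l(H_i(\ux^{[t]};M_P))}{l(M_P/(\ux^{[t]})M_P)}.
\]
Now Remark~\ref{GlobalSV} gives $l(M_P/(\ux^{[t]})M_P) \le n(M_P)\,e((\ux^{[t]}),M_P)$, with $n(M_P)$ bounded by a global constant $N$; hence the leading factor is $\le N e((\ux),M_P)$ and
\[
\frac{l(M_P/(\ux^{[t]})M_P)}{\prod_j t_j} - e((\ux),M_P) \le N\,e((\ux),M_P) \sum_{i=1}^d \frac{l(H_i(\ux^{[t]};M_P))}{l(M_P/(\ux^{[t]})M_P)}.
\]

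Next, apply Theorem~\ref{Uniform0-ti-g} to each term: there is a uniform $K_0$ so that the $i$-th summand is at most $K_0/(\min_j t_j)^i \le K_0/\min_j t_j$, and since $d \le \dim(R) < \infty$ is absolutely bounded, the whole sum collapses into a single constant $K$, giving
\[
\frac{l(M_P/(\ux^{[t]})M_P)}{\prod_j t_j} - e((\ux),M_P) \le e((\ux),M_P) \cdot \frac{K}{\min_j t_j}.
\]
The last inequality stated in the corollary follows from this together with the classical estimate $e((\ux),M_P) \le l(M_P/(\ux)M_P)$ valid for any parameter ideal. For the ``$\varepsilon$'' formulation, given $C$ and $\epsilon$ it suffices to pick $t_0 > KC/\epsilon$: whenever $e((\ux),M_P) \le C$ and $\min_j t_j \ge t_0$, the displayed bound is strictly less than $\epsilon$.

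The main obstacle is essentially bookkeeping rather than new ideas: verifying that the constants $N = \sup_P n(M_P)$, $\sup_P e(\ol R_P)$, and the absolute constant $K_0$ extracted from Theorem~\ref{Uniform0-ti-g} truly can be taken independent of $P \in \Supp(M)$. This is exactly what the excellence-type finiteness $\sup_P e(\ol R_P) < \infty$ and the uniform Artin--Rees hypothesis in Lemma~\ref{lem:Length-g} are designed to deliver, and they propagate through the inductive bundle of Remark~\ref{SVbyInduction-g}. Beyond that, the argument is completely parallel to Corollary~\ref{uniform-convergence}.
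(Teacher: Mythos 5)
Your proposal is correct and follows essentially the same route as the paper: localize at each $P$, use Serre's formula to express the error as an alternating sum of Koszul homology lengths, bound $l(M_P/(\ux^{[t]})M_P)$ by $n(M_P)e((\ux^{[t]}),M_P)$ with $\sup_P n(M_P)<\infty$ from Remark~\ref{GlobalSV}/\ref{SVbyInduction-g}, and then apply Theorem~\ref{Uniform0-ti-g}, exactly as the paper does when it transfers the proof of Corollary~\ref{uniform-convergence} to each $M_P$.
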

\begin{proof}
We carry out the same proof as in Corollary~\ref{uniform-convergence} for each $M_P$, replacing Theorem~\ref{Uniform0-ti} by Theorem~\ref{Uniform0-ti-g}.
\end{proof}

For example, (the first part of) Corollary~\ref{uniform-convergence-g} applies to all
minimal reductions of $P_P/\Ann(M_P)$ in $R_P/\Ann(M_P)$
for all $P \in \Supp(M)$, since there is an upper bound for $\{e(M_P)
\mid P \in \Supp(M)\}$ under the assumption of
Corollary~\ref{uniform-convergence-g}.

Finally, similar to Remark~\ref{RemMinGen}, we have the following

\begin{remark}
\label{RemMinGen-g}
Let notation and assumptions be as in Lemma \ref{lem:Length-g} and 
let $c = \max_{P \in \Min(\overline{R})} l_{\overline{R}_P}(\overline{R}_P)$. 
By Remark~\ref{RemMinGen} and Remark~\ref{GlobalSV} (or
Remark~\ref{SVbyInduction-g}), we have  
\[
\frac{\sup_{P \in \Supp(M)}n(M_P)}{\mu(M)}
\leq \sup_{P \in \Supp(M)}\frac{n(M_P)}{\mu(M_P)}
\leq c \sup_{P \in \Supp(M)}n(\overline R_P) < \infty,
\]
in which $c\sup_{P \in \Supp(M)}n(\overline R_P)$ depends only
on $\overline R = R/\Ann(M)$.
\end{remark}

\end{document}